\documentclass[12pt,reqno, final]{amsart}	
	
		\title{$L^2$-Betti numbers of coamenable quantum groups}
		\author{David Kyed}
		\address{David Kyed,
		Mathematisches Institut,
		Georg-Au\-gust-Uni\-versi\-t{\"a}t, G{\"o}t\-ting\-en,
		Bunsenstra{\ss}e 3-5,
		D-37073 G{\"o}ttingen, 
		Germany}
	
		\email{kyed@uni-math.gwdg.de}
		\urladdr{www.uni-math.gwdg.de/kyed}
    \subjclass[2000]{16W30,43A07, 46L89, 16E30}

\usepackage[english]{babel}              
\usepackage[latin1]{inputenc}            
\usepackage{amsmath}                     
\usepackage{amssymb}                     
\usepackage{amsthm}
\usepackage{fancybox}
\usepackage{graphicx}
\usepackage{hyperref}
\usepackage{mathrsfs}									  
\usepackage[all]{xy}										
\usepackage[T1]{fontenc}								
\usepackage{url}												
\usepackage{array}
\usepackage{verbatim,epsfig}
\usepackage{wasysym}                    


\theoremstyle{plain}

\newtheorem{thm}{Theorem}[section] 
\newtheorem{cor}[thm]{Corollary}
\newtheorem{lem}[thm]{Lemma}
\newtheorem{defi}[thm]{Definition}
\newtheorem{prop}[thm]{Proposition}

\theoremstyle{definition}

\newtheorem{ex}[thm]{Example}
\newtheorem{rem}[thm]{Remark}

\newtheorem{obs}[thm]{Observation}



\newcommand{\NN}{{\mathbb N}}
\newcommand{\ZZ}{{\mathbb Z}}

\newcommand{\RR}{{\mathbb R}}
\newcommand{\CC}{{\mathbb C}}

\newcommand{\MM}{{\mathbb M}}
\newcommand{\GG}{{\mathbb G}}

\newcommand{\ip}[2]{\langle {#1} \hspace{0.03cm} | \hspace{0.03cm} {#2} \rangle}

\newcommand{\ipp}{\ip{\cdot}{\cdot}}

\renewcommand{\L}{{\mathscr L}}

\newcommand{\K}{{\mathcal K}}
\newcommand{\B}{{\mathscr B}}

\newcommand{\I}{{\mathbf{I}}}

\newcommand{\varps}{{\varepsilon}}
\newcommand{\rg}{{\operatorname{rg\hspace{0.04cm}}}}
\newcommand{\htens}{\bar{\otimes}}

\newcommand{\tens}{\otimes}

\renewcommand{\Re}{{\operatorname{Re}}}

\newcommand{\spann}{{\operatorname{span}}}

\newcommand{\To}{\longrightarrow}

\newcommand{\supp}{{\operatorname{supp}}}
\newcommand{\red}{{\operatorname{red}}}
\newcommand{\Tor}{\operatorname{Tor}}
\newcommand{\op}{{\operatorname{{op}}}}

\newcommand{\Mor}{\operatorname{Mor}}

\newcommand{\id}{\operatorname{id}}

\newcommand{\tr}{{\operatorname{Tr}}}

\newcommand{\bet}{\beta^{(2)}}

\newcommand{\del}{{\partial}}

\newcommand{\alg}{{\operatorname{alg}}}

\newcommand{\tetxrm}{\textrm}

\newcommand{\utens}[1]{\underset{#1}{\tens}}
\newcommand{\Irred}{\operatorname{Irred}}
\newcommand{\vna}{\operatorname{vNa}}
\newcommand{\hooklongrightarrow}{\lhook\joinrel\longrightarrow}

\newcommand{\NW}{{\operatorname{NW}}}
\newcommand{\FC}{{\operatorname{FC}}}
\newcommand{\A}{{\operatorname{A}}}
\newcommand{\tenrep}{\mbox{ $\mbox{\scriptsize \sf T}
\hspace{-1.77ex}\bigcirc$}} 

\newcommand{\pfb}{P_{\bar{F}}}
\newcommand{\pdb}{P_{\bar{\del}}}
\newcommand{\full}{\operatorname{full}}
\renewcommand{\B}{B}
\newcommand{\tensrep}{\tenrep}
\renewcommand{\S}{{\mathscr S}}
\newcommand{\Tr}{\operatorname{Tr}}
\newcommand{\minitenrep}{\mbox{ $\mbox{\tiny \sf T}
\hspace{-1.44ex} \ocircle$}}


\hyphenation{amen-ability amen-able co-amen-able equi-val-ence co-re-pre-sen-ta-tion ope-ra-tor ge-ne-ra-lizes}

\begin{document}
\begin{abstract}
We prove that a compact quantum group is coamenable if and only if its corepresentation ring is amenable. We further propose a F{\o}lner condition for compact quantum groups and prove it to be equivalent to coamenability. Using this F{\o}lner condition, we prove that for a coamenable compact quantum group with tracial Haar state, the enveloping von Neumann algebra is dimension flat over the Hopf algebra of matrix coefficients. This generalizes a theorem of L{\"u}ck from the group case to the quantum group case, and provides examples of compact quantum groups with vanishing $L^2$-Betti numbers.
\end{abstract}
\maketitle
\section*{Introduction}\label{intro-section}
The theory of $L^2$-Betti numbers for discrete groups is originally due to Atiyah and dates back to the seventies \cite{atiyah}. These $L^2$-Betti numbers are defined for those discrete groups that permit a free, proper and cocompact action on some contractible, Riemannian manifold $X$. If $\Gamma$ is such a group, the space of square integrable $p$-forms on $X$ becomes a finitely generated Hilbert module for the group von Neumann algebra $\L(\Gamma)$. As such it has a Murray-von Neumann dimension which turns out to be independent of the choice of $X$ and is called the $p$-th $L^2$-Betti number of $\Gamma$, denoted $\bet_p(\Gamma)$. More recently, L{\"u}ck \cite{luck97,luck98,luckto} transported the notion of Murray-von Neumann dimension to the setting of finitely generated projective  (algebraic) $\L(\Gamma)$-modules and extended thereafter the domain of definition to the class of all modules.  With this extended dimension function, $\dim_{\L(\Gamma)}(-)$, it is possible to extend the notion of $L^2$-Betti numbers to cover all discrete groups $\Gamma$ by setting
\[
\bet_p(\Gamma)=\dim_{\L(\Gamma)}\Tor_{p}^{\CC\Gamma}(\L(\Gamma),\CC).
\]
For more details on the relations between the different definitions of $L^2$-Betti numbers and the extended dimension function we refer to L{\"u}ck's book \cite{luck02}.\\
All the ingredients in the homological algebraic definition above have fully developed analogues in the world of compact quantum groups, and using this dictionary  the notion of $L^2$-Betti numbers was generalized to the quantum group setting in  \cite{quantum-betti}. Since this generalization is central for the work in the present paper, we shall now explain it in greater detail.  Consider a compact quantum group $\GG=(A,\Delta)$ and assume that its Haar state $h$ is a trace. If we denote by $A_0$ the unique dense Hopf $*$-algebra and by $M$ the enveloping von Neumann algebra of $A$ in the GNS representation arising  from $h$, then the $p$-th $L^2$-Betti number of $\GG$ is defined as
\[
\bet_p(\GG)=\dim_M\Tor_p^{A_0}(M,\CC).
\]
Here $\CC$ is considered an $A_0$-module via the counit $\varps\colon A_0\to\CC$ and $\dim_M(-)$ is L{\"u}ck's extended dimension function arising from (the extension of) the trace-state $h$. This definition extends the classical one \cite[1.3]{quantum-betti} in the sense that 
\[
\bet_p(\GG)=\bet_p(\Gamma)
\]
when $\GG=(C^*_\red(\Gamma),\Delta_\red)$. \\
The aim of this paper is to investigate the $L^2$-Betti numbers of the class of coamenable, compact quantum groups. In the classical case we have that $\bet_p(\Gamma)=0$ for all $p\geq 1$ whenever $\Gamma$ is an amenable group. This can be seen as a special case of \cite[5.1]{luck98} where it is proved that the von Neumann algebra $\L(\Gamma)$ is \emph{dimension flat} over $\CC\Gamma$, meaning that
\begin{align*}
\dim_{\L(\Gamma)}\Tor_p^{\CC\Gamma}(\L(\Gamma),Z)=0 \tag{$p\geq 1$}
\end{align*}
for any $\CC\Gamma$-module $Z$ --- provided, of course, that $\Gamma$ is still assumed amenable. We generalize this result to the quantum group setting in Theorem \ref{dim-flad}. More precisely, we prove that if $\GG=(A,\Delta)$ is a compact, coamenable quantum group with tracial Haar state and $Z$ is any module for the algebra of matrix coefficients $A_0$ then
\begin{align*}
\dim_M\Tor_p^{A_0}(M,Z)=0. \tag{$p\geq 1$}
\end{align*}
Here $M$ is again the enveloping von Neumann algebra in the GNS representation arising from the Haar state. In order to prove this result we need a \emph{F{\o}lner condition} for compact quantum groups. The classical F{\o}lner condition for groups \cite{foelner} is a geometrical condition, on the action of the group on itself, which is equivalent to amenability of the group. In order to obtain a quantum analogue of F{\o}lner's condition a detailed study of the  ring of corepresentations, associated to a compact quantum group, is needed. The ring of corepresentations is a special case of a so-called fusion algebra and we have therefore devoted a substantial part of this paper to the study of abstract fusion algebras and their amenability. Amenability for (finitely generated) fusion algebras was introduced by Hiai and Izumi in \cite{izumi} where they also gave two equivalent F{\o}lner-type conditions for fusion algebras. We generalize their results to the non-finitely generated case and prove that a compact quantum group is coamenable if and only if its corepresentation ring is amenable. From this we obtain a F{\o}lner condition for compact quantum groups which is equivalent to coamenability. Using this F{\o}lner condition we prove our main result, Theorem \ref{dim-flad}, which implies that coamenable compact quantum groups have vanishing $L^2$-Betti numbers in all positive degrees.

\vspace{0.3cm}
\paragraph{\emph{Structure.}}
The paper is organized as follows. In the first section we recapitulate (parts of) Woronowicz's theory of compact quantum groups. The second and third section is devoted to the study of abstract fusion algebras and amenability of such. In the fourth section we discuss coamenability of compact quantum groups and investigate the relation between coamenability of a compact quantum group and amenability of its corepresentation ring. The fifth section is an interlude in which the necessary notation concerning von Neumann algebraic compact quantum groups and their discrete duals is introduced. The sixth section is devoted to the proof of our main theorem (\ref{dim-flad}) and the seventh, and final, section consists of examples.
\vspace{0.3cm}
\paragraph{\emph{Acknowledgements.}} I wish to thank my supervisor Ryszard Nest for the many discussions about quantum groups and their (co)amenability, and Andreas Thom for pointing out to me that the bicrossed product construction could be used to generate examples of quantum groups satisfying F{\o}lner's condition.
\vspace{0.3cm}
\paragraph{\emph{Notation.}} Throughout the paper, the symbol $\odot$ will be used to denote algebraic tensor products while the symbol $\htens$ will be used to denote tensor products in the category of Hilbert spaces or the category of von Neumann algebras. All tensor products between $C^*$-algebras are assumed minimal/spatial and these will be denoted by the symbol $\tens$.

\section{Preliminaries on compact quantum groups}\label{prelim}
In this section we briefly recall Woronowicz's theory of compact quantum groups. Detailed treatments, and proofs of the results stated, can be found in \cite{wor-cp-qgrps}, \cite{vandaele} and \cite{tuset}. \\
A compact quantum group $\GG$ is a pair $(A,\Delta)$ where $A$ is a unital $C^*$-algebra and $\Delta\colon A\To A\tens A$ is a unital $*$-homomorphism from $A$ to the minimal tensor product of $A$ with itself satisfying:
\begin{align*}
(\id\tens \Delta)\Delta &=(\Delta\tens \id)\Delta \tag{coassociativity}\\
\overline{\Delta(A)(1\tens A)}&=\overline{\Delta(A)(A\tens 1)}= A\tens A &\tag{non-degeneracy}
\end{align*}
For such a compact quantum group $\GG=(A,\Delta)$, there exists a unique state $h\colon A\to \CC$, called the Haar state, which is invariant in the sense that
\[
(h\tens \id)\Delta(a)=(\id\tens h)\Delta(a)=h(a)1,
\]
for all $a\in A$. Let $H$ be a Hilbert space and let $u\in M(\K(H)\tens A)$ be an invertible multiplier. Then $u$ is called a \emph{corepresentation} if
\[
(\id\tens \Delta)u=u_{(12)}u_{(13)},
\]
where we use the standard \emph{leg numbering convention}; for instance $u_{(12)}=u\tens 1$. \emph{Intertwiners}, \emph{direct sums} and \emph{equivalences} between corepresentations as well as \emph{irreducibility} are defined in a straight forward manner. See e.g.~\cite{vandaele} for details. We shall denote by $\Mor(u,v)$ the set of intertwiners from $u$ to $v$. It is a fact that each irreducible corepresentation is finite dimensional and equivalent to a unitary corepresentation. Moreover, every unitary corepresentation is unitarily equivalent to a direct sum of irreducible corepresentations. For two finite dimensional unitary corepresentations $u,v$ their \emph{tensor product} is defined as
\[
u\tenrep v=u_{(13)}v_{(23)}.
\]
This is again a unitary corepresentation of $\GG$. The algebra $A_0$ generated by all matrix coefficients arising from irreducible corepresentations becomes a Hopf $*$-algebra (with the restricted comultiplication) which is dense in $A$. We denote its antipode by $S$ and its counit by $\varps$. We also recall that the restriction of the Haar state to the $*$-algebra $A_0$ is always faithful. The quantum group $\GG$ is called a compact \emph{matrix} quantum group if there exists a \emph{fundamental} unitary corepresentation; i.e.~a finite dimensional, unitary corepresentation whose matrix coefficients generate $A_0$ as a $*$-algebra. Each finite dimensional, unitary corepresentation $u$ defines a  \emph{contragredient} corepresentation $u^c$ on the dual Hilbert space; if $u\in \B(H)\odot A_0$ for some finite dimensional Hilbert space $H$ then $u^c\in B(H')\odot A_0$ is given by $u^c=((\ \cdot \ )'\tens S)u$, where for $T\in \B(H)$ the operator $T'\in B(H')$ is the natural dual $(T'(y'))(x)=y'(Tx)$. In general $u^c$ is not a unitary, but it is a corepresentation; i.e.~it is invertible and satisfies $(\id\tens \Delta)u^c=u^c_{(12)}u^c_{(13)}$ and is therefore equivalent to a unitary corepresentation.  By choosing an orthonormal basis $e_1,\dots, e_n$ for $H$ we get an identification of $\B(H)\odot A_0$ with $\MM_n(A_0)$. If, under this identification, $u$ becomes the matrix $(u_{ij})$ then $u^c$ is identified with the matrix $\bar{u}=(u_{ij}^*)$, where we identify $B(H')\odot A_0$ with $\MM_n(A_0)$ using the dual basis $e_1',\dots, e_n'$. From this it follows that $u^{cc}$ is equivalent to $u$. Note also that one has $(u\oplus v)^c=u^c\oplus v^c$ and $(u\tenrep v)^c=v^c\tenrep u^c$ for unitary corepresentations $u$ and $v$ (see e.g.~\cite{woronowicz-pseudo}). If $u\in \B(H)\odot A_0$ is a finite dimensional corepresentation its \emph{character} is defined as
\[
\chi(u)=(\tr\tens\id)u\in A_0,
\]
where $\tr$ is the unnormalized trace on $\B(H)$. The character map has the following properties.
\begin{prop}[\cite{woronowicz-pseudo}]\label{character}
If $u$ and $v$ are finite dimensional, unitary corepresentations then
\[
\chi(u\tenrep v)=\chi(u)\chi(v),  \quad \chi(u\oplus v)=\chi(u)+\chi(v) \quad \textrm{ and } \quad \chi(u^c)=\chi(u)^* .
\]
Moreover, if  $u$ and $v$ are equivalent then $\chi(u)=\chi(v)$. 
\end{prop}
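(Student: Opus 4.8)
The plan is to verify each of the four assertions by passing to matrix coefficients with respect to chosen orthonormal bases and computing directly, the only tool needed being the trace property of the unnormalised trace $\tr$ on $\B(H)$.

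First I would treat the tensor product. Fix orthonormal bases for $H_u$ and $H_v$, write $u=\sum_{ij}e_{ij}\tens u_{ij}$ and $v=\sum_{kl}e_{kl}\tens v_{kl}$ with $e_{ij}$ the corresponding matrix units, and expand
\[
u\tenrep v=u_{(13)}v_{(23)}=\sum_{i,j,k,l}(e_{ij}\tens e_{kl})\tens u_{ij}v_{kl}
\]
in the tensor product basis of $H_u\tens H_v$. Since $\tr$ on $\B(H_u\tens H_v)$ factors as $\tr\tens\tr$, applying $(\tr\tens\id)$ retains exactly the terms with $i=j$ and $k=l$, and the resulting double sum factorises as $\bigl(\sum_i u_{ii}\bigr)\bigl(\sum_k v_{kk}\bigr)=\chi(u)\chi(v)$ (no reordering in $A_0$ is needed, only distributivity). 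The direct sum is even easier: with respect to $H_u\oplus H_v$ the corepresentation $u\oplus v$ is block diagonal with diagonal blocks $u$ and $v$, and the trace of such a block diagonal operator is the sum of the traces of the blocks, so $\chi(u\oplus v)=\chi(u)+\chi(v)$.

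For the contragredient I would invoke the identification recalled just above: after choosing an orthonormal basis $e_1,\dots,e_n$ of $H$ together with the dual basis of $H'$, the corepresentation $u^c$ is identified with the matrix $\bar u=(u_{ij}^*)$. Hence $\chi(u^c)=(\tr\tens\id)\bar u=\sum_i u_{ii}^*=\bigl(\sum_i u_{ii}\bigr)^*=\chi(u)^*$, using only additivity of the involution on $A_0$; note that $\chi$ is defined for the (not necessarily unitary) corepresentation $u^c$ as well, so no separate argument is required here. Finally, if $u$ and $v$ are equivalent, pick an invertible $T\in\Mor(u,v)$, so that $(T\tens 1)u=v(T\tens 1)$ and therefore $v=(T\tens 1)u(T^{-1}\tens 1)$. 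Since $\tr$ is a trace on $\B(H)$, the slice map $(\tr\tens\id)$ on $\B(H)\odot A_0$ is invariant under conjugation by $T\tens 1$, whence $\chi(v)=\chi(u)$. I do not expect any genuine obstacle: the whole proposition is just an unwinding of the definitions recalled in this section, and the only thing requiring a little care is keeping the leg--numbering convention consistent in the tensor product computation and making sure the matrix identification used for $u^c$ is the same one under which $\chi$ is evaluated.
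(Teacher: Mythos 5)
The paper states this proposition without proof, citing Woronowicz's \emph{Compact matrix pseudogroups}, so there is no in-text argument to compare against. Your direct computation is correct and is essentially the standard one: the expansion $u_{(13)}v_{(23)}=\sum_{i,j,k,l}(e_{ij}\tens e_{kl})\tens u_{ij}v_{kl}$ together with $\tr=\tr\tens\tr$ gives multiplicativity, block-diagonality gives additivity, the identification $u^c\leftrightarrow(u_{ij}^*)$ recalled in Section \ref{prelim} gives $\chi(u^c)=\chi(u)^*$, and invariance of $(\tr\tens\id)$ under conjugation by $T\tens 1$ for an invertible intertwiner $T$ gives invariance under equivalence. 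No gaps.
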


We end this section with the two basic examples of compact quantum groups arising from actual groups.
\begin{ex}
If $G$ is a compact, Hausdorff topological group then the Gelfand dual  $C(G)$ becomes a compact quantum group with comultiplication $\Delta_c\colon C(G)\To C(G)\tens C(G)=C(G\times G)$ given by
\[
\Delta_c(f)(s,t)=f(st).
\]
The Haar state is in this case given by integration against the Haar probability measure on $G$, and the finite dimensional unitary corepresentations of $C(G)$ are exactly the finite dimensional unitary representations of $G$. 

\end{ex}
\begin{ex}	
If $\Gamma$ is a discrete, countable group then the reduced group $C^*$-algebra $C^*_\red(\Gamma)$ becomes a compact quantum group when endowed with comultiplication given by
\[
\Delta_\red(\lambda_\gamma)=\lambda_\gamma\tens\lambda_\gamma.
\]
Here $\lambda$ denotes the left regular representation of $\Gamma$. In this case, the Haar state is just the natural trace on $C^*_\red(\Gamma)$, and a complete family of irreducible, unitary corepresentations is given by the set $\{\lambda_\gamma\mid \gamma\in \Gamma\}$.
\end{ex}
\begin{rem}\label{sep-rem}
All compact quantum groups to be considered in the following are assumed to have a separable underlying $C^*$-algebra.
The quantum Peter-Weyl theorem \cite[3.2.3]{tuset} then implies that the GNS space arising from the Haar state is separable and, in particular, that there are at most countable many (pairwise inequivalent) irreducible corepresentations.
\end{rem}

\section{Fusion Algebras}\label{fusion-section}
In this section we introduce the notion of fusion algebras and amenability of such objects. This topic was treated by Hiai and Izumi  in \cite{izumi} and we will follow this reference closely throughout this section. Other references on the subject are \cite{yamagami}, \cite{yamagami-hayashi} and \cite{sunder-bimodules}. Throughout the section, $\NN_0$ will denote the non-negative integers.

\begin{defi}[\cite{izumi}]\label{fusion-defi}
Let $R$ be a unital ring and  assume that $R$ is free as $\ZZ$-module with basis $I$. Then $R$ is called a fusion algebra if the unit $e$ is an element of $I$ and the following holds:
\begin{itemize}
\item[(i)] The abelian monoid $\NN_0[I]$ is stable under multiplication. That is, for all $\xi, \eta \in I$ the unique family $(N_{\xi,\eta}^\alpha)_{\alpha\in I}$  of integers satisfying
\[
\xi\eta=\sum_{\alpha\in I} N_{\xi,\eta}^\alpha \alpha,
\]
consists of non-negative numbers. 
\item[(ii)] The ring $R$ has a $\ZZ$-linear, anti-multiplicative involution $x\mapsto \bar{x}$ preserving the basis $I$ globally.
\item[(iii)] Frobenius reciprocity holds, i.e.~for $\xi,\eta,\alpha\in I$ we have
\[
N_{\xi,\eta}^\alpha=N_{\bar{\xi},\alpha}^\eta=N_{\alpha,\bar{\eta}}^\xi.
\]
\item[(iv)] There exists a $\ZZ$-linear multiplicative function $d\colon R \to [1,\infty[$ such that $d(\xi)=d(\bar{\xi})$ for all $\xi\in I$. This function is called the dimension function.
\end{itemize} 
\end{defi}
Note that the distinguished basis, involution and dimension function are all included in the data defining a fusion algebra. Each fusion algebra comes with a natural trace $\tau$ given by 
\[
\sum_{\alpha\in I } k_\alpha\alpha\overset{\tau}{\longmapsto} k_e. 
\]
We shall use this trace later to define a $C^*$-envelope of a fusion algebra. Note also that the multiplicativity of $d$ implies
\[
1=\sum_{\alpha\in I}\frac{d(\alpha)}{d(\xi)d(\eta)}N_{\xi,\eta}^\alpha,
\]
for all $\xi,\eta\in I$. For an element $r=\sum_{\alpha\in I} k_\alpha \alpha \in R$, the set $\{\alpha\in I \mid k_\alpha\neq 0\}$ is called the support of $r$ and denoted $\supp(r)$. We shall also consider the complexified fusion algebra $\CC\tens_{\ZZ} \ZZ[I]$ which will be denoted $\CC[I]$ in the following. Note that this becomes a complex $*$-algebra with the induced algebraic structures.

\begin{ex}
For any discrete group $\Gamma$ the integral group ring $\ZZ [\Gamma]$ becomes a fusion algebra when endowed with (the $\ZZ$-linear extension of) inversion as involution and trivial dimension function given by $d(\gamma)=1$ for all $\gamma\in \Gamma$.
\end{ex}
The irreducible representations of a compact group constitute the basis in a fusion algebra where the tensor product of representations is the product.  We shall not go into details with this construction since it will be contained in the following more general example.
\begin{ex}\label{corep-fusion}
If $\GG=(A,\Delta)$ is a compact quantum group its irreducible corepresentations constitute the basis of a fusion algebra with tensor product as multiplication. Since this example will play a prominent role later, we shall now elaborate on the construction. Denote by $\Irred(\GG)=(u^\alpha)_{\alpha\in I}$ a complete family of representatives for the equivalence classes of irreducible, unitary corepresentations of $\GG$. As explained in Section \ref{prelim},  for all $u^\alpha,u^\beta\in \Irred(\GG)$ there exists a finite subset $I_0\subseteq I$ and a family $(N_{\alpha,\beta}^{\gamma})_{\gamma\in I_0}$ of positive integers  such that  $u^\alpha\tensrep u^\beta$ is equivalent to
\[
\bigoplus_{\gamma\in I_0} \underbrace{u^\gamma\oplus\cdots\oplus u^\gamma}_{N_{\alpha\beta}^\gamma \text{ times}}.
\]
Thus, a product can be defined on the free $\ZZ$-module $\ZZ[\Irred(\GG)]$ by setting
\[
u^\alpha \cdot u^\beta=\sum_{\gamma\in I_0}N_{\alpha,\beta}^\gamma u^\gamma,
\]
and the trivial corepresentation $e=1_A\in \Irred(\GG)$ is  a unit for this product. If we denote by $u^{\bar{\alpha}}\in \Irred(\GG)$ the unique representative equivalent to $(u^\alpha)^c$, then the map $u^\alpha\mapsto u^{\bar{\alpha}}$ extends to a conjugation on the ring $\ZZ[\Irred(\GG)]$ and since each $u^\alpha$ is an element of $\MM_{n_{\alpha}}(A)$ for some $n_\alpha\in \NN$ we can also define a dimension function $d\colon \ZZ[\Irred(\GG)]\to[1,\infty[$ by $d(u^{\alpha})=n_\alpha$. When endowed with this multiplication, conjugation and dimension function $\ZZ[\Irred(\GG)]$ becomes a fusion algebra. The only thing that is not clear at this moment is that Frobenius reciprocity holds. To see this, we first note that for any $\alpha\in I$ and any finite dimensional corepresentation $v$ we have (by Schur's Lemma \cite[6.6]{vandaele}) that $u^\alpha$ occurs exactly
\[
\dim_\CC\Mor(u^\alpha,v)
\]
times in the decomposition of $v$. Moreover, we have for any two unitary core\-presentations $v$ and $w$ that
\begin{align*}
\dim_\CC\Mor(v,w)&=\dim_\CC((V_w\tens V_v')^{w\minitenrep v^c})\\
\dim_\CC\Mor(v^{cc},w)&=\dim_\CC((V_v'\tens V_w)^{v^c\minitenrep w})
\end{align*}
Here the right hand side denotes the linear dimension of the space of invariant vectors under the relevant coaction. These formulas are proved in \cite[3.4]{woronowicz-pseudo} for compact matrix quantum groups, but the same proof carries over to the case where the compact quantum group in question does not necessarily possess a fundamental corepresentation. Using the first formula, we get for $\alpha,\beta,\gamma\in I$ that
\begin{align*}
N_{\alpha,\beta}^\gamma &=\dim_\CC\Mor(u^\gamma,u^\alpha\tenrep u^\beta)\\
&=\dim_\CC(V_\alpha\tens V_\beta\tens V_\gamma')^{u^\alpha\minitenrep u^\beta\minitenrep (u^\gamma)^c}\\
&=\dim_\CC(V_\gamma\tens V_\beta '\tens V_\alpha ')^{u^\gamma\minitenrep (u^\beta)^c\minitenrep (u^\alpha)^c}\\
&=\dim_\CC \Mor(u^\alpha,u^\gamma\tenrep (u^\beta)^c)\\
&=N_{\gamma,\bar{\beta}}^{\alpha}.
\end{align*}
The remaining identity in Frobenius reciprocity follows similarly using the second formula. The fusion algebra $\ZZ[\Irred(\GG)]$ is called the corepresentation ring (or fusion ring) of  $\GG$ and is denoted $R(\GG)$. 

Recall that the character of a corepresentation $u\in \MM_n(A)$ is defined as $\chi(u)=\sum_{i=1}^n u_{ii}$. It follows from Proposition \ref{character} that the $\ZZ$-linear extension
\[
\chi\colon \ZZ[\Irred(\GG)]\To A_0
\]
is an injective homomorphism of $*$-rings. I.e.~$\chi$ is additive and multiplicative with $\chi(u^{\bar{\alpha}})=(\chi(u^\alpha))^*$. This gives a link between the two $*$-algebras $R(\GG)$ and $A_0$ which will be of importance later.

\end{ex}
Other interesting examples of fusion algebras arise from inclusions of $\I\I_1$-factors. See \cite{izumi} for details. 
\begin{rem}
In the following we shall only consider fusion algebras with an at most countable basis. This will therefore be assumed without further notice throughout the paper. Since we will primarily be interested in corepresentation rings of compact quantum groups, this is not very restrictive since the standing separability assumption (Remark \ref{sep-rem}) ensures that the corepresentation rings always have a countable basis.
\end{rem}

Consider again an abstract fusion algebra $R=\ZZ[I]$. For $\xi,\eta\in I$ we define the (weighted) convolution of the corresponding Dirac measures, $\delta_\xi$ and $\delta_\eta$, as
\[
\delta_\xi\ast\delta_\eta=\sum_{\alpha\in I}\frac{d(\alpha)}{d(\xi)d(\eta)}N_{\xi,\eta}^\alpha \delta_\alpha \in \ell^1(I).
\]
This extends linearly and continuously to a submultiplicative product on $\ell^1(I)$. For  $f\in \ell^\infty(I)$ and $\xi\in I$ we define $\lambda_\xi(f),\rho_\xi(f)\colon I\to \CC$ by
\begin{align*}
\lambda_\xi(f)(\eta)&=\sum_{\alpha\in I} f(\alpha)(\delta_{\bar{\xi}}\ast\delta_\eta)(\alpha) \\
\rho_\xi(f)(\eta)&=\sum_{\alpha\in I} f(\alpha)(\delta_{\eta}\ast\delta_\xi)(\alpha)
\end{align*}
Denote by $\sigma$ the counting measure on $I$ scaled with $d^2$; that is $\sigma(\xi)=d(\xi)^2$. Combining Proposition 1.3, Remark 1.4 and Theorem 1.5 in \cite{izumi} we get
\begin{prop}[\cite{izumi}]\label{gns-and-lambda}
For each $f\in \ell^\infty(I)$ we have $\lambda_\xi(f),\rho_\xi(f)\in \ell^\infty(I)$ and for each $p\in \NN\cup\{\infty\}$ the maps $\lambda_\xi,\rho_\xi\colon \ell^\infty(I)\to\ell^\infty(I)$ restrict to bounded operators on $\ell^p(I,\sigma)$ denoted $\lambda_{p,\xi}$ and $\rho_{p,\xi}$ respectively. By linear extension, we therefore obtain a map $\lambda_{p,-}\colon\ZZ[I]\to B(\ell^p(I,\sigma))$ and this map respects the weighted convolution product. Moreover, for $p=2$ the operator $U\colon \ell^2(I)\to \ell^2(I,\sigma)$ given by $U(\delta_\eta)=\frac{1}{d(\eta)}\delta_\eta$ is unitary and intertwines $\lambda_{2,\xi}$ with the operator
\[
l_\xi : \delta_\eta\longmapsto \frac{1}{d(\xi)} \sum_\alpha N_{\xi,\eta}^\alpha \delta_\alpha.
\] 
\end{prop}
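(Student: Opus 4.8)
The statement is really a package of standard facts about the weighted convolution algebra $\ell^1(I)$, and the plan is to derive all of them from the normalisation identity $\sum_{\alpha\in I}\frac{d(\alpha)}{d(\xi)d(\eta)}N_{\xi,\eta}^\alpha=1$ recorded above, from Frobenius reciprocity, and --- for the $\ell^p$-boundedness --- from the Riesz--Thorin interpolation theorem. First I would unwind the definitions: since $(\delta_{\bar{\xi}}\ast\delta_\eta)(\alpha)=\frac{d(\alpha)}{d(\xi)d(\eta)}N_{\bar{\xi},\eta}^\alpha$ (using $d(\bar{\xi})=d(\xi)$) one has
\[
\lambda_\xi(f)(\eta)=\sum_{\alpha\in I}\frac{d(\alpha)}{d(\xi)d(\eta)}N_{\bar{\xi},\eta}^\alpha\, f(\alpha),\qquad \rho_\xi(f)(\eta)=\sum_{\alpha\in I}\frac{d(\alpha)}{d(\xi)d(\eta)}N_{\eta,\xi}^\alpha\, f(\alpha).
\]
For $f\in\ell^\infty(I)$ the coefficients appearing here are non-negative and sum to $1$ over $\alpha$ by the normalisation identity, so $|\lambda_\xi(f)(\eta)|\le\|f\|_\infty$ for every $\eta$, and likewise for $\rho_\xi$; in particular $\lambda_\xi(f),\rho_\xi(f)\in\ell^\infty(I)$ and both operators are contractions on $\ell^\infty(I)$. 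Since $\sigma$ charges every point of $I$, the space $\ell^\infty(I,\sigma)$ coincides isometrically with $\ell^\infty(I)$, so $\lambda_\xi$ and $\rho_\xi$ are in particular contractions on $\ell^\infty(I,\sigma)$.

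Next I would establish the $\ell^1(I,\sigma)$-bound. Expanding $\|\lambda_\xi(f)\|_{1,\sigma}=\sum_\eta|\lambda_\xi(f)(\eta)|d(\eta)^2$, interchanging the absolutely convergent sums, and using Frobenius reciprocity in the form $N_{\bar{\xi},\eta}^\alpha=N_{\xi,\alpha}^\eta$ together with $\sum_\eta d(\eta)N_{\xi,\alpha}^\eta=d(\xi)d(\alpha)$ (a rearrangement of the normalisation identity), the weights cancel and one is left with $\|\lambda_\xi(f)\|_{1,\sigma}\le\sum_\alpha|f(\alpha)|d(\alpha)^2=\|f\|_{1,\sigma}$; the estimate for $\rho_\xi$ is the same computation using instead $N_{\eta,\xi}^\alpha=N_{\alpha,\bar{\xi}}^\eta$ and $d(\bar\xi)=d(\xi)$. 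Having contractivity at both endpoints $\ell^1(I,\sigma)$ and $\ell^\infty(I,\sigma)$, Riesz--Thorin gives that $\lambda_\xi$ and $\rho_\xi$ restrict to contractions $\lambda_{p,\xi},\rho_{p,\xi}$ on $\ell^p(I,\sigma)$ for every $p\in[1,\infty]$, in particular for every $p\in\NN\cup\{\infty\}$.

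For the multiplicativity of $\lambda_{p,-}$ it is enough to verify $\lambda_\xi\lambda_\eta=\sum_{\alpha}\frac{d(\alpha)}{d(\xi)d(\eta)}N_{\xi,\eta}^\alpha\lambda_\alpha$ already on $\ell^\infty(I)$ and then extend $\ZZ$-linearly and pass to the restrictions (the right-hand side being, by definition, $\lambda$ applied to $\delta_\xi\ast\delta_\eta$). Computing $\lambda_\xi(\lambda_\eta(f))(\zeta)$ and collapsing the resulting double sum with the help of associativity of $\ast$ on $\ell^1(I)$ shows that it equals $\sum_\gamma f(\gamma)\big((\delta_{\bar{\eta}}\ast\delta_{\bar{\xi}})\ast\delta_\zeta\big)(\gamma)$. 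On the other hand, anti-multiplicativity of the involution (equivalently the identity $N_{\xi,\eta}^\alpha=N_{\bar{\eta},\bar{\xi}}^{\bar{\alpha}}$) gives $\sum_{\alpha}\frac{d(\alpha)}{d(\xi)d(\eta)}N_{\xi,\eta}^\alpha\delta_{\bar{\alpha}}=\delta_{\bar{\eta}}\ast\delta_{\bar{\xi}}$, so the right-hand side of the claimed identity, applied to $f$ and evaluated at $\zeta$, also equals $\sum_\gamma f(\gamma)\big((\delta_{\bar{\eta}}\ast\delta_{\bar{\xi}})\ast\delta_\zeta\big)(\gamma)$. The two sides agree, which is precisely the assertion that $\lambda_{p,-}$ respects the weighted convolution product.

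Finally, for $p=2$: $\|U\delta_\eta\|_{2,\sigma}^2=\frac{1}{d(\eta)^2}\sigma(\eta)=1$ and $U$ sends the orthonormal basis $(\delta_\eta)$ of $\ell^2(I)$ to an orthonormal family, so $U$ is unitary. Evaluating on basis vectors, $\lambda_\xi(\delta_\eta)(\beta)=(\delta_{\bar{\xi}}\ast\delta_\beta)(\eta)=\frac{d(\eta)}{d(\xi)d(\beta)}N_{\bar{\xi},\beta}^\eta=\frac{d(\eta)}{d(\xi)d(\beta)}N_{\xi,\eta}^\beta$ by Frobenius reciprocity, whence
\[
\lambda_{2,\xi}(U\delta_\eta)=\frac{1}{d(\eta)}\lambda_\xi(\delta_\eta)=\frac{1}{d(\xi)}\sum_\beta\frac{1}{d(\beta)}N_{\xi,\eta}^\beta\,\delta_\beta=U(l_\xi\delta_\eta),
\]
so $\lambda_{2,\xi}U=Ul_\xi$, i.e. $U$ intertwines $\lambda_{2,\xi}$ with $l_\xi$ (and in particular $l_\xi$ is bounded). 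I do not expect a genuine obstacle here: the only thing that needs care is the bookkeeping with the three incarnations of Frobenius reciprocity and the weight $\sigma=d^2$, and once the $\ell^1$- and $\ell^\infty$-estimates are correctly set up the interpolation theorem does the remaining work.
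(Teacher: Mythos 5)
Your argument is correct. The paper itself gives no proof of this proposition --- it is imported wholesale from Hiai--Izumi (combining their Proposition 1.3, Remark 1.4 and Theorem 1.5) --- and your derivation (endpoint contractivity on $\ell^\infty(I)$ and $\ell^1(I,\sigma)$ via the normalisation identity $\sum_\alpha \frac{d(\alpha)}{d(\xi)d(\eta)}N_{\xi,\eta}^\alpha=1$ and Frobenius reciprocity, followed by Riesz--Thorin interpolation, plus the direct basis computations for multiplicativity of $\lambda_{p,-}$ and for the intertwining relation $\lambda_{2,\xi}U=Ul_\xi$) is essentially the standard argument found in that reference.
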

\begin{rem}\label{bounded-rem}
Under the natural identification of $\ell^2(I)$ with the GNS space $L^2(\CC[I],\tau)$, we see that $\pi_\tau(\xi)=d(\xi)l_\xi$. In particular the GNS representation consists of bounded operators. Here $\tau$ is the natural trace defined just after Definition \ref{fusion-defi}.
\end{rem}

\section{Amenability for Fusion Algebras}

The notion of amenability for fusion algebras was introduced in \cite{izumi}, but only in the slightly restricted setting
of finitely generated fusion algebras; a fusion algebra $R=\ZZ[I]$ is called \emph{finitely generated} if there exists a finitely supported probability measure $\mu$ on $I$ such that 
\[
I=\bigcup_{n\in \NN} \supp(\mu^{\ast n}) \quad \textrm{ and } \quad  \mu(\bar{\xi})=\mu(\xi)
\text{ for all }\xi\in I.
\]
That is, if the union of the supports of all powers of $\mu$, with respect to convolution, is $I$ and $\mu$ is invariant under the involution. The first condition is referred to as \emph{non-degeneracy} of $\mu$ and the second condition is referred to as \emph{symmetry} of $\mu$.  

In \cite{izumi}, amenability is defined, for a finitely generated fusion algebra, by requiring that $\|\lambda_{p,\mu}\|=1$ for some $1<p<\infty$ and some finitely supported, symmetric, non-degenerate probability measure $\mu$. It is then proved that this is independent of the choice of $\mu$ and $p$, using the non-degeneracy property of the measure. If we consider a compact quantum group $\GG=(A,\Delta)$ it is not difficult to prove that its corepresentation ring $R(\GG)$ is finitely generated exactly when $\GG$ is a compact matrix quantum group. Since we are also interested in quantum groups without  a fundamental corepresentation we will choose the following definition of amenability.

\begin{defi}
A fusion algebra $R=\ZZ[I]$ is called amenable if $1\in \sigma(\lambda_{2,\mu})$ for every finitely supported, symmetric probability measure $\mu$ on $I$.
\end{defi}
Here $\sigma(\lambda_{2,\mu})$ denotes the spectrum of the operator $\lambda_{2,\mu}$. From Proposition 1.3 and Corollary 4.4 in \cite{izumi} it follows that our definition agrees with the one in \cite{izumi} on the class of  finitely generated fusion algebras. The relation between amenability for fusion algebras and the classical notion of amenability for groups will be explained later. See e.g.~Remark \ref{foelner-rem} and Corollary \ref{group-fusion}.

\begin{defi}\label{rand-defi}
Let $R=\ZZ[I]$ be a fusion algebra. For two finite subsets $S,F\subseteq I$ we define the boundary of $F$ relative to $S$ as the set
\begin{align*}
\del_S(F)&=\{\alpha\in F\mid \exists \ \xi\in S: \supp(\alpha\xi)\nsubseteq F\}\\
        &\cup \{\alpha\in F^c\mid \exists \ \xi\in S: \supp(\alpha\xi)\nsubseteq F^c\}.
\end{align*}
Here, and in what follows, $F^c$ denotes the set $I\setminus F$.
\end{defi}

The modified definition of amenability allows the following extension of \cite[4.6]{izumi} from where we also adopt some notation.

\begin{thm}\label{izumi-thm}
Let $R=\ZZ[I]$ be a fusion algebra with dimension function $d$. Then the following are equivalent:
\begin{itemize}
\item[(A)] The fusion algebra is amenable. 
\item[$(\FC 1)$] For every finitely supported, symmetric probability measure $\mu$ on $I$ with $e\in \supp(\mu)$ and every $\varps>0$ there exists a finite  subset $F\subseteq I$ such that
\begin{align*}
\sum_{\xi\in \supp(\chi_F\ast \mu)}d(\xi)^2<(1+\varps)\sum_{\xi\in F}d(\xi)^2.
\end{align*}  
\item[$(\FC 2)$] For every finite, non-empty subset $S\subseteq I$ and every $\varps> 0$ there exists a finite subset $F\subseteq I$ such that
\begin{align*}
\forall \ \xi\in S : \ \|\rho_{1,\xi}(\chi_F)-\chi_F\|_{1,\sigma}<\varps \|\chi_F\|_{1,\sigma},
\end{align*}
where $\rho_{1,\xi}\in B(\ell^1(I,\sigma))$ is the operator from Proposition \ref{gns-and-lambda}.
\item[$(\FC 3)$]  For every finite, non-empty subset $S\subseteq I$ and every $\varps>0$ there exists a finite subset $F\subseteq I$ such that
\begin{align*}
\sum_{\xi\in \del_S(F)}d(\xi)^2<\varps\sum_{\xi\in F}d(\xi)^2.
\end{align*}
\end{itemize}
\end{thm}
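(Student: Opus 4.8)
The plan is to establish the chain of implications $(A)\Rightarrow(\FC1)\Rightarrow(\FC3)\Rightarrow(\FC2)\Rightarrow(A)$, adapting the finitely generated argument of Hiai--Izumi \cite[4.6]{izumi} so that it works with the modified (spectral) definition of amenability and without non-degeneracy of $\mu$. The heart of the matter is the equivalence $(A)\Leftrightarrow(\FC1)$, which is a weighted Følner-versus-spectral-radius statement for the self-adjoint operator $\lambda_{2,\mu}$ on the Hilbert space $\ell^2(I,\sigma)$. For $(A)\Rightarrow(\FC1)$: fix a finitely supported symmetric $\mu$ with $e\in\supp(\mu)$; then $\lambda_{2,\mu}$ is a bounded self-adjoint operator with $\|\lambda_{2,\mu}\|\le 1$ (it is a convex combination, via the weights $d(\alpha)/(d(\xi)d(\eta))$, of the contractions coming from the $\lambda_{2,\xi}$; this uses the normalization identity $1=\sum_\alpha \frac{d(\alpha)}{d(\xi)d(\eta)}N_{\xi,\eta}^\alpha$ recorded after Definition \ref{fusion-defi}). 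Amenability says $1\in\sigma(\lambda_{2,\mu})$, hence $1$ is an approximate eigenvalue, so there are unit vectors $f_n\in\ell^2(I,\sigma)$ with $\|\lambda_{2,\mu}f_n-f_n\|_{2,\sigma}\to 0$. One then runs the standard ``Følner out of an approximate eigenvector'' argument: replacing $f_n$ by $|f_n|$ (which only decreases $\|\lambda_{2,\mu}|f_n|-|f_n|\|$, since the matrix of $\lambda_{2,\mu}$ in the $\sigma$-orthonormal basis $\{d(\xi)^{-1}\delta_\xi\}$ has non-negative entries and, by Frobenius reciprocity, is symmetric with respect to $\sigma$), and then feeding the layer-cake / co-area decomposition $|f_n| = \int_0^\infty \chi_{\{|f_n|>t\}}\,dt$ through the inequality; an averaging argument over the level $t$ produces a single level set $F=F_t$ with $\sum_{\xi\in\supp(\chi_F\ast\mu)}d(\xi)^2<(1+\varps)\sum_{\xi\in F}d(\xi)^2$. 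The condition $e\in\supp(\mu)$ guarantees $F\subseteq\supp(\chi_F\ast\mu)$ so the inequality is meaningful.

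For $(\FC1)\Rightarrow(\FC3)$: given a finite non-empty $S$ and $\varps>0$, enlarge $S$ to $S\cup\bar S\cup\{e\}$ and form the uniform (symmetric, $e$-containing) probability measure $\mu$ supported there; observe that $\del_S(F)\subseteq\supp(\chi_F\ast\mu)\setminus F'$ for a suitable inner core $F'$, or more simply that every $\xi\in\del_S(F)$ contributes to $\supp(\chi_F\ast\mu)\triangle F$, so $\sum_{\xi\in\del_S(F)}d(\xi)^2\le \sum_{\xi\in\supp(\chi_F\ast\mu)}d(\xi)^2-\sum_{\xi\in F}d(\xi)^2<\varps\sum_{\xi\in F}d(\xi)^2$ after rescaling $\varps$. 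For $(\FC3)\Rightarrow(\FC2)$: unwind the definition of $\rho_{1,\xi}$ on $\ell^1(I,\sigma)$ applied to the indicator $\chi_F$; the $\sigma$-weighted $\ell^1$-norm of $\rho_{1,\xi}(\chi_F)-\chi_F$ is controlled, via Frobenius reciprocity and the identity $\sum_\alpha d(\alpha)N^\alpha_{\eta,\xi}=d(\xi)\sum_\alpha d(\alpha)[\eta\in\supp(\alpha\bar\xi)\text{-type term}]$, by $\sum_{\xi'\in\del_{\{\xi,\bar\xi\}}(F)}d(\xi')^2$, so a small relative boundary gives smallness of $\|\rho_{1,\xi}(\chi_F)-\chi_F\|_{1,\sigma}$ uniformly over the finite set $S$. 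Finally $(\FC2)\Rightarrow(A)$: given a finitely supported symmetric $\mu$, let $S=\supp(\mu)\cup\{e\}$ and take the $F$ produced by $(\FC2)$ for a small $\varps$; the normalized vector $g=\|\chi_F\|_{1,\sigma}^{-1}\chi_F$, or rather its square-root $\|\chi_F\|_{2,\sigma}^{-1}\chi_F$ (here using $\chi_F=\chi_F^{1/2}$), is an almost-invariant unit vector for $\rho_{2,\mu}$ in $\ell^2(I,\sigma)$, hence $1\in\sigma(\rho_{2,\mu})$; since $\rho_{2,\xi}$ and $\lambda_{2,\xi}$ are related by the conjugation/involution and $\sigma(\rho_{2,\mu})=\sigma(\lambda_{2,\bar\mu})=\sigma(\lambda_{2,\mu})$ for symmetric $\mu$, we conclude $1\in\sigma(\lambda_{2,\mu})$, i.e.\ amenability. (Passing the estimate from the $\ell^1$-Følner condition to an $\ell^2$ almost-invariant vector is the usual square-root trick: $\|\,|a|^{1/2}-|b|^{1/2}\|_2^2\le\|\,|a|-|b|\,\|_1$.)

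The main obstacle is precisely the place where non-degeneracy of $\mu$ was used in \cite{izumi}: in the finitely generated setting one fixes a single generating measure and leverages $I=\bigcup_n\supp(\mu^{\ast n})$ both to get a genuine eigenvector (not just approximate) and to transfer estimates between $\mu$ and arbitrary finite $S$. With the weaker definition we must instead work spectrally throughout and quantify over \emph{all} finitely supported symmetric $\mu$, so the argument has to be arranged so that the Følner set $F$ produced for one measure is usable for comparison with any finite $S$ — this is why $(\FC2)$ and $(\FC3)$ are phrased in terms of arbitrary finite $S\subseteq I$ rather than a fixed generator, and why the cyclic implication is set up to pass through the geometric conditions. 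The technical care needed is in the co-area/averaging step of $(A)\Rightarrow(\FC1)$ (making sure the weights $d(\xi)^2$ are tracked correctly through the layer-cake decomposition and that one genuinely extracts a single good level set), and in verifying the bookkeeping identities relating $\supp(\chi_F\ast\mu)$, $\del_S(F)$, and $\|\rho_{1,\xi}(\chi_F)-\chi_F\|_{1,\sigma}$; all of these are elementary consequences of Frobenius reciprocity and multiplicativity of $d$, but they must be done with the correct $\sigma$-weighting.
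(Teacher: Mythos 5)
Your overall cycle $(A)\Rightarrow(\FC1)\Rightarrow(\FC3)\Rightarrow(\FC2)\Rightarrow(A)$ is a legitimate reorganization of the paper's scheme $(A)\Leftrightarrow(\FC2)\Rightarrow(\FC3)\Rightarrow(\FC1)\Rightarrow(\FC2)$, and your $(\FC3)\Rightarrow(\FC2)$ and $(\FC2)\Rightarrow(A)$ steps are sound (the latter is exactly the paper's route through $\|\chi_F\|_{D_\mu(2)}^2=\|\chi_F\|_{D_\mu(1)}$ and Lemma \ref{ip-og-dirichlet}). But the step $(\FC1)\Rightarrow(\FC3)$ contains a genuine error. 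For a symmetric $S=\supp(\mu)$ with $e\in S$ one has $\supp(\chi_F\ast\mu)=F\cup\del_S(F)$, hence
\[
\sum_{\xi\in \supp(\chi_F\ast \mu)}d(\xi)^2-\sum_{\xi\in F}d(\xi)^2=\sum_{\xi\in \del_S(F)\cap F^c}d(\xi)^2 ,
\]
which sees only the \emph{outer} half of the boundary. An inner boundary point $\alpha\in\del_S(F)\cap F$ lies in both $F$ and $\supp(\chi_F\ast\mu)$ (the latter because $e\in\supp(\mu)$), so it contributes nothing to $\supp(\chi_F\ast\mu)\mathbin{\triangle} F$; your claimed inequality $\sum_{\xi\in\del_S(F)}d(\xi)^2\leq \sum_{\supp(\chi_F\ast\mu)}d(\xi)^2-\sum_{F}d(\xi)^2$ therefore goes the wrong way whenever the inner boundary is nonempty. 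This is precisely why the paper derives $(\FC3)$ from $(\FC2)$ rather than from $(\FC1)$: the quantity $\|\rho_{1,\xi}(\chi_F)-\chi_F\|_{1,\sigma}$ is bounded below by $d(\alpha)^2/M$ on \emph{both} halves of $\del_S(F)$ (cases (iii) and (iv) of that proof, via Lemma \ref{vurd-lem}), whereas the support condition in $(\FC1)$ detects only the outer half. Your step can be repaired --- Frobenius reciprocity shows every inner boundary point $\alpha$ admits $\beta\in\del_S(F)\cap F^c$ and $\xi\in S$ with $N_{\beta,\bar\xi}^\alpha=N_{\alpha,\xi}^\beta>0$, and the identity $\sum_\alpha N_{\beta,\bar\xi}^\alpha d(\alpha)=d(\beta)d(\xi)$ then bounds the inner boundary mass by $|S|\max_{\xi\in S}d(\xi)^2$ times the outer boundary mass --- but this argument is absent from your proposal and is not a cosmetic omission.

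A second, lesser, issue is that your $(A)\Rightarrow(\FC1)$ is under-specified at the decisive point. The layer-cake/averaging argument applied to an approximate eigenvector naturally yields a level set $F$ with $\|\chi_F\|_{D_\mu(1)}$ small relative to $\|\chi_F\|_{1,\sigma}$, i.e.\ an $\ell^1$ almost-invariance estimate of type $(\FC2)$; upgrading this to the \emph{support} statement of $(\FC1)$ again requires the pointwise lower bound of Lemma \ref{vurd-lem} (each element of $\supp(\chi_F\ast\mu)\setminus F$ contributes at least $\min(\mu)/M$ to that $\ell^1$ quantity), costing a factor $M=\max_{\eta\in\supp(\mu)}d(\eta)^2$ that must be absorbed into $\varps$. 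In effect your first arrow silently contains the content of the paper's $(\FC2)\Rightarrow(\FC3)\Rightarrow(\FC1)$. Once these two points are filled in, your cycle closes and is equivalent in substance to the paper's proof; the paper's detour through the Dirichlet-norm conditions $(\NW_r)$ for $(A)\Leftrightarrow(\FC2)$ is the same square-root/layer-cake mechanism you invoke, organized so that the $1/M$ boundary-counting estimate is only ever needed in one place.
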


The condition ($\FC 3$) was not present in \cite{izumi}. It is to be considered as a fusion algebra analogue of the F{\o}lner condition for groups as it is  presented in \mbox{\cite[F.6]{benedetti}}. The strategy for the proof of Theorem \ref{izumi-thm} is to prove the following implications:
\[
(\A)\Leftrightarrow (\FC 2) \Rightarrow (\FC 3) \Rightarrow (\FC 1)\Rightarrow (\FC 2).
\]
The proof of the implications $(\A)\Leftrightarrow (\FC 2)$ and $(\FC 1)\Rightarrow (\FC 2)$ are small modifications of the corresponding proof in \cite{izumi}.
We first set out to prove the circle of implications
\[
(\FC2)\Rightarrow (\FC 3)\Rightarrow (\FC 1)\Rightarrow (\FC2).
\]
For the proof we will need the following simple lemma.
\begin{lem}\label{vurd-lem}
If $N_{\xi,\eta}^\alpha>0$ for some $\xi,\eta,\alpha\in I$ then  $d(\alpha)d(\eta)\geq d(\xi)$.
\end{lem}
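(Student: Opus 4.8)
The plan is to reduce the asserted inequality to the multiplicativity and positivity of the dimension function, after using Frobenius reciprocity to move the relevant index into the ``target'' position. First I would note that applying the $\ZZ$-linear, multiplicative function $d$ directly to the product rule $\xi\eta=\sum_{\beta}N_{\xi,\eta}^\beta\beta$ only yields $d(\xi)d(\eta)=\sum_\beta N_{\xi,\eta}^\beta d(\beta)$, which bounds $d(\alpha)$ from \emph{above} (by $d(\xi)d(\eta)$), not from below; so a naive approach does not give the claim and one must first rewrite the multiplicity with $\xi$ in the superscript.

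The key step is Frobenius reciprocity (item (iii) of Definition \ref{fusion-defi}): from $N_{\xi,\eta}^\alpha>0$ we obtain $N_{\alpha,\bar\eta}^\xi=N_{\xi,\eta}^\alpha>0$, i.e.\ $\xi$ occurs with positive multiplicity in the decomposition of the product $\alpha\bar\eta$. Expanding $\alpha\bar\eta=\sum_\beta N_{\alpha,\bar\eta}^\beta\beta$ and applying $d$ gives $d(\alpha)d(\bar\eta)=\sum_\beta N_{\alpha,\bar\eta}^\beta d(\beta)\ge N_{\alpha,\bar\eta}^\xi\,d(\xi)\ge d(\xi)$, where the first inequality discards the remaining terms (all the $N_{\alpha,\bar\eta}^\beta$ are non-negative and all values of $d$ are $\ge 1>0$) and the second uses $N_{\alpha,\bar\eta}^\xi\ge 1$. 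Since $d(\bar\eta)=d(\eta)$ by item (iv) of Definition \ref{fusion-defi}, this is exactly $d(\alpha)d(\eta)\ge d(\xi)$.

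I do not expect a genuine obstacle here; the only point requiring a little care is selecting the correct one of the three Frobenius identities so that $\xi$ winds up as the superscript index, and then observing that the normalisation $d\ge 1$ (together with $N_{\alpha,\bar\eta}^\beta\in\NN_0$) is precisely what licenses throwing away all summands but the $\beta=\xi$ one.
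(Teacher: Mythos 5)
Your proposal is correct and coincides with the paper's own proof: both invoke the Frobenius identity $N_{\xi,\eta}^\alpha=N_{\alpha,\bar\eta}^\xi$, expand $d(\alpha)d(\bar\eta)=\sum_\gamma N_{\alpha,\bar\eta}^\gamma d(\gamma)$, discard all terms except $\gamma=\xi$ using non-negativity and $d\geq 1$, and finish with $d(\bar\eta)=d(\eta)$. No differences worth noting.
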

\begin{proof}
By Frobenius reciprocity, we have $N_{\xi,\eta}^\alpha= N_{\alpha,\bar{\eta}}^\xi>0$ and hence
\[
d(\alpha)d(\eta)=d(\alpha)d(\bar{\eta})=\sum_{\gamma}N_{\alpha,\bar{\eta}}^\gamma d(\gamma)\geq N_{\alpha,\bar{\eta}}^\xi d(\xi)\geq d(\xi).
\]
\end{proof}

\begin{proof}[Proof of $(\FC2)\Rightarrow (\FC3)$] 

We first note that (FC2), by the triangle inequality, implies the following condition:\\

\noindent\emph{For every finite, non-empty set $S\subseteq I$ and every $\varps> 0$ there exists a finite set $F\subseteq I$ such that}
\begin{align*}
\|\rho_{1,\chi_S}(\chi_F)-|S|\chi_F\|_{1,\sigma}<\varps \|\chi_F\|_{1,\sigma}.\tag{$\dagger$}
\end{align*}
Here $|S|$ denotes the cardinality of $S$.
Let $S$ and $\varps>0$ be given and choose $F$ such that $(\dagger)$ is satisfied. Define a map $\varphi\colon I\to \RR$ by   $\varphi(\xi)=\rho_{1,\chi_S}(\chi_F)(\xi)-|S|\chi_F(\xi)$. We note that
\begin{align*}
\varphi(\xi)&=\Big{(}\sum_{\alpha\in I}\chi_F(\alpha)(\delta_\xi\ast \chi_S)(\alpha)\Big{)}-|S|\chi_F(\xi)\\
&= \Big{(}\sum_{\alpha\in F}\sum_{\eta\in S}(\delta_\xi\ast\delta_\eta)(\alpha)\Big{)}-|S|\chi_F(\xi)\\
&=\sum_{\alpha\in F}\sum_{\eta\in S} \frac{d(\alpha)}{d(\xi)d(\eta)}N_{\xi,\eta}^\alpha  - |S|\chi_F(\xi). \\
\end{align*}
We now divide into four cases. 
\begin{itemize}
\item[(i)] If $\xi\in F\cap \del_S(F)^c $ then $\supp(\xi\eta)\subseteq F$ for all $\eta\in S$ and hence we get the relation $\sum_{\alpha\in F}\frac{d(\alpha)}{d(\xi)d(\eta)}N_{\xi,\eta}^\alpha=1$. This implies $\varphi(\xi)=0$.
\item[(ii)] If $\xi\in F^c\cap \del_S(F)^c$ we see that $N_{\xi,\eta}^\alpha=0$ for all $\alpha\in F$ and all $\eta\in S$ and hence $\varphi(\xi)=0$.
\item[(iii)] If $\xi\in F^c \cap \del_S(F)$ we have $\chi_F(\xi)=0$ and there exist $\alpha_0\in F$ and $\eta_0\in S$ such that   $N_{\xi,\eta_0}^{\alpha_0}\neq 0$. Using Lemma \ref{vurd-lem}, we now get
\[
\varphi(\xi)\geq \frac{d(\alpha_0)}{d(\xi)d(\eta_0)}N_{\xi,\eta_0}^{\alpha_0}\geq \frac{1}{d(\eta_0)^2}N_{\xi,\eta_0}^{\alpha_0}\geq \frac{1}{d(\eta_0)^2}\geq \frac{1}{M},
\]
where $M=\max\{d(\eta)^2\mid\eta\in S\}$.
\item[(iv)] If $\xi\in F\cap \del_S(F)$ we have
\begin{align*}
\varphi(\xi)& =\sum_{\alpha\in F}\sum_{\eta\in S}\frac{d(\alpha)}{d(\xi)d(\eta)}N_{\xi,\eta}^\alpha - |S|\\
&= (-1)\sum_{\eta\in S}\Big{(}1-\sum_{\alpha\in F} \frac{d(\alpha)}{d(\xi)d(\eta)}N_{\xi,\eta}^\alpha\Big{)}\\
&=(-1)\sum_{\eta\in S} \sum_{\alpha\notin F} \frac{d(\alpha)}{d(\xi)d(\eta)}N_{\xi,\eta}^\alpha,
\end{align*}
and because $\xi\in \del_S(F)\cap F$ there exist $\eta_0\in S$ and $\alpha_0\notin F$ such that $N_{\xi,\eta_0}^{\alpha_0}\neq 0$. Using Lemma \ref{vurd-lem} again we conclude, as in (iii), that $|\varphi(\xi)|\geq \frac{1}{M}$.
\end{itemize}
We now get
\begin{align*}
\varps\sum_{\xi\in F}d(\xi)^2&=\varps\|\chi_F\|_{1,\sigma}\\
&> \|\rho_{1,\chi_S}(\chi_F)-|S|\chi_F\|_{1,\sigma}\tag{by $(\dagger)$}\\
&=\sum_{\xi\in I}|\varphi(\xi)|d(\xi)^2\\
&=\sum_{\xi\in \del_S(F)}|\varphi(\xi)|d(\xi)^2 \tag{by (i) and (ii)}\\
&\geq \frac{1}{M}\sum_{\xi\in \del_S(F)}d(\xi)^2,\tag{by (iii) and (iv)}
\end{align*}
and since $\varps$ was arbitrary the claim follows.
\end{proof}

\begin{proof}[Proof of $(\FC 3)\Rightarrow (\FC 1)$] Given a finitely supported, symmetric pro\-ba\-bi\-lity measure $\mu$, with $\mu(e)>0$, and $\varps>0$ we put $S=\supp(\mu)$ and choose $F\subseteq I$ such that $(\FC 3)$ is fulfilled with respect to $\varps$. We have
\[
(\chi_F\ast \mu)(\xi) =\sum_{\alpha\in F,\beta\in S}\mu(\beta)\frac{d(\xi)}{d(\alpha)d(\beta)}N_{\alpha,\beta}^\xi,
\]
so
\begin{align*}
(\chi_F\ast \mu)(\xi)=0 &\Leftrightarrow \forall \alpha\in F\ \forall\beta\in S: N_{\alpha,\beta}^\xi =0\\
&\Leftrightarrow \forall \alpha\in F \ \forall\beta\in S: N_{\xi,\bar{\beta}}^\alpha =0\tag{Frobenius}\\
&\Leftrightarrow \forall \alpha\in F\ \forall\beta\in S: N_{\xi,\beta}^\alpha =0\tag{$S$ symmetric}\\
&\Leftrightarrow \xi\in F^c\cap \del_S(F)^c.\tag{$e\in S$}
\end{align*}
Hence $\supp(\chi_F\ast\mu)=(F^c\cap \del_S(F)^c)^c=F\cup\del_S(F)$ and we get
\begin{align*}
\sum_{\xi\in \supp(\chi_F\ast \mu)}d(\xi)^2-\sum_{\xi\in F}d(\xi)^2 &= \sum_{\xi\in F\cup\del_S(F)}d(\xi)^2-\sum_{\xi\in F}d(\xi)^2\\
&=\sum_{\xi\in \del_S(F)\cap F^c}d(\xi)^2\\
&\leq \sum_{\xi\in \del_S(F)}d(\xi)^2\\
&<\varps \sum_{\xi\in F}d(\xi)^2. \tag{by $(\FC3)$} 
\end{align*}
\end{proof}
\begin{proof}[Proof of $(\FC1)\Rightarrow (\FC2)$]
Given $\varps>0$ and $S\subseteq I$ we define $\tilde{S}=S\cup \bar{S}\cup\{e\}$ and $\mu=\frac{1}{|\tilde{S}|}\chi_{\tilde{S}}$. Choose $F\subseteq I$ such that $\mu$ and $F$ satisfy $(\FC 1)$ with respect to $\frac{\varps}{2}$. We aim to prove that $(\FC 2)$ is satisfied for all $\xi\in \tilde{S}$.  For arbitrary $\xi\in I$ we have
\begin{align*}
\|\rho_{1,\xi}(\chi_F)-\chi_F\|_{1,\sigma} &= \sum_{\alpha}|\rho_{1,\xi}(\chi_F)(\alpha)-\chi_F(\alpha)|d(\alpha)^2\\
&=\sum_{\alpha}|(\sum_{\eta\in F}\frac{d(\eta)}{d(\alpha)d(\xi)}N_{\alpha,\xi}^\eta)-\chi_F(\alpha)|d(\alpha)^2\\
&=\sum_{\alpha\in F}\Big{(}1-\sum_{\eta\in F}\frac{d(\eta)}{d(\alpha)d(\xi)}N_{\alpha,\xi}^\eta\Big{)}d(\alpha)^2\\
&\quad+\sum_{\alpha\notin F}\Big{(}\sum_{\eta\in F}\frac{d(\eta)}{d(\alpha)d(\xi)}N_{\alpha,\xi}^\eta\Big{)}d(\alpha)^2\\
&=\sum_{\alpha\in F}\sum_{\eta\notin F}\frac{d(\eta)d(\alpha)}{d(\xi)}N_{\alpha,\xi}^\eta
+\sum_{\alpha\notin F}\sum_{\eta\in F}\frac{d(\eta)d(\alpha)}{d(\xi)}N_{\alpha,\xi}^\eta\\
&=\sum_{\alpha\notin F}\sum_{\eta\in F}\frac{d(\eta)d(\alpha)}{d(\xi)}(N_{\alpha,\xi}^\eta +N_{\eta,\xi}^\alpha)\\
&=\sum_{\alpha\notin F}\sum_{\eta\in F}\frac{d(\eta)d(\alpha)}{d(\xi)}(N_{\eta,\bar{\xi}}^\alpha +N_{\eta,\xi}^\alpha).\tag{$\dagger$}
\end{align*}
For  $\xi\in \supp(\mu)=\tilde{S}$ and $\alpha\notin F$, it is easy to check that $(\chi_F\ast \mu)(\alpha)> 0$ if there exists an $\eta\in F$ such that $N_{\eta,\bar{\xi}}^\alpha +N_{\eta,\xi}^\alpha>0$. Hence the calculation $(\dagger)$ implies that
\begin{align*}
\|\rho_{1,\xi}(\chi_F)-\chi_F\|_{1,\sigma} &\leq \sum_{\alpha\in \supp(\chi_F\ast\mu)\setminus F}\sum_{\eta\in F}\frac{d(\eta)d(\alpha)}{d(\xi)}(N_{\eta,\bar{\xi}}^\alpha +N_{\eta,\xi}^\alpha)\\
&\leq \sum_{\alpha\in \supp(\chi_F\ast\mu)\setminus F}\sum_{\eta\in I}\frac{d(\eta)d(\alpha)}{d(\xi)}(N_{\eta,\bar{\xi}}^\alpha +N_{\eta,\xi}^\alpha)\\
&= 2\sum_{\alpha\in \supp(\chi_F\ast\mu)\setminus F}d(\alpha)^2\\
&=2\Big{(} \sum_{\alpha\in \supp(\chi_F\ast\mu)}d(\alpha)^2- \sum_{\alpha\in  F}d(\alpha)^2   \Big{)}\\
&< \varps \|\chi_F\|_{1,\sigma},
\end{align*}
where the last estimate follows from $(\FC 1)$. Note that the condition $e\in \supp(\mu)$ was used to get the fourth step in the calculation above.
\end{proof}

We now set out to prove the  remaining equivalence in Theorem \ref{izumi-thm}.

\begin{proof}[Proof of $(\A)\Leftrightarrow (\FC 2)$] At the end of this section four formulas are gathered; these will be used during the proof and referred to as (F1) - (F4). For the actual proof we also need the following definitions. Consider a finitely supported, symmetric probability measure $\mu$ on $I$ and define $p_\mu\colon I\times I\to \RR$ by
\[
p_\mu(\xi,\eta)=(\delta_\xi\ast \mu)(\eta)=\sum_{\omega}\mu(\omega)\frac{d(\eta)}{d(\xi)d(\omega)}N_{\xi,\omega}^\eta.
\]
Note that the function $p_\mu$ satisfies the \emph{reversibility condition}:
\[
\sigma(\xi)p_\mu(\xi,\eta)=\sigma(\eta)p_\mu(\eta,\xi).
\]
For a finitely supported function $f\in c_0(I)$ and $r\in \NN$ we also define
\[
\|f\|_{D_\mu(r)}=\Big{(}\frac12\sum_{\xi,\eta}\sigma(\xi)p_\mu(\xi,\eta)|f(\xi)-f(\eta)|^r\Big{)}^{\frac{1}{r}}.
\]
Although this is referred to as the \emph{generalized Dirichlet $r$-norm} of $f$, one should keep in mind that the function $\|\cdot\|_{D_\mu(r)}$ is only a semi norm.  We shall now consider the following condition:\\

\noindent\emph{For all finitely supported, symmetric, probability measures $\mu$ we have }
\begin{align*}
\inf\Big{\{}\frac{\|f\|_{D_\mu(r)}}{\|f\|_{r,\sigma}} \mid f\in c_0(I)\setminus\{0\} \Big{\}}=0.\tag{NW$_r$}
\end{align*}
The reason for the name (NW$_r$), which appeared in \cite{izumi}, is that the condition is the negation of a so-called Wirtinger inequality. See \cite{izumi} for more details. To prove $(\A)\Leftrightarrow (\FC 2)$ we will actually prove the following equivalences
\[
(\FC2)\Leftrightarrow (\NW_1) \hspace{8pt} \text{ and } \hspace{8pt} \forall r: (\NW_1)\Leftrightarrow  (\NW_r) \hspace{8pt} \text{ and } \hspace{8pt} (\tetxrm{A})\Leftrightarrow (\NW_2). 
\]
For the latter of these equivalences the following lemma will be useful.
\begin{lem}\label{ip-og-dirichlet}
For all $f\in c_0(I)$ we have 
\[
\|f\|_{D_\mu(2)}^2=\ip{f}{f}_{2,\sigma}-\ip{\rho_{2,\mu}(f)}{f}_{2,\sigma},
\]
where $\ipp_{2,\sigma}$ denotes the inner product on $\ell^2(I,\sigma)$.
\end{lem}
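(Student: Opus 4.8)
The plan is to expand both sides of the asserted identity in terms of the transition kernel $p_\mu$ and to reconcile the two expressions using the two structural properties of $p_\mu$ recorded above: on the one hand, for each fixed $\xi$ the function $p_\mu(\xi,\cdot)$ is a probability measure on $I$ --- because $\delta_\xi\ast\delta_\eta$ has total mass $\sum_\alpha\frac{d(\alpha)}{d(\xi)d(\eta)}N_{\xi,\eta}^\alpha=1$ by the identity stated right after Definition \ref{fusion-defi}, so $\delta_\xi\ast\mu$ is a convex combination of probability measures --- and on the other hand the reversibility condition $\sigma(\xi)p_\mu(\xi,\eta)=\sigma(\eta)p_\mu(\eta,\xi)$. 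Throughout, $f$ is finitely supported; one notes that $\rho_\mu(f)$ is then finitely supported as well (its support is contained in $\bigcup\{\supp(\alpha\bar\omega):\alpha\in\supp f,\ \omega\in\supp\mu\}$, by Frobenius reciprocity), so all the double sums below are in fact finite and no convergence questions arise; that $\rho_{2,\mu}$ is a genuine operator on $\ell^2(I,\sigma)$ is guaranteed by Proposition \ref{gns-and-lambda}.

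First I would rewrite the right-hand side of the claim. Since $\rho_\mu(f)(\xi)=\sum_{\eta}f(\eta)\,p_\mu(\xi,\eta)$ and $p_\mu$ is real valued,
\[
\ip{\rho_{2,\mu}(f)}{f}_{2,\sigma}=\sum_{\xi,\eta}\sigma(\xi)p_\mu(\xi,\eta)\,\overline{f(\eta)}f(\xi).
\]
Interchanging the roles of $\xi$ and $\eta$ in this sum and then invoking reversibility shows it equals $\sum_{\xi,\eta}\sigma(\xi)p_\mu(\xi,\eta)\,\overline{f(\xi)}f(\eta)$; averaging the two representations yields
\[
\ip{\rho_{2,\mu}(f)}{f}_{2,\sigma}=\sum_{\xi,\eta}\sigma(\xi)p_\mu(\xi,\eta)\,\Re\big(\overline{f(\xi)}f(\eta)\big),
\]
so in particular this number is real. (Equivalently, reversibility says precisely that $\rho_{2,\mu}$ is self-adjoint on $\ell^2(I,\sigma)$.) Next I would rewrite $\ip{f}{f}_{2,\sigma}=\|f\|_{2,\sigma}^2=\sum_\xi\sigma(\xi)|f(\xi)|^2$ by inserting $1=\sum_\eta p_\mu(\xi,\eta)$ and then symmetrising the resulting double sum by exactly the same relabel-and-reversibility manipulation, obtaining $\tfrac12\sum_{\xi,\eta}\sigma(\xi)p_\mu(\xi,\eta)\big(|f(\xi)|^2+|f(\eta)|^2\big)$. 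Subtracting the two, and using $|f(\xi)-f(\eta)|^2=|f(\xi)|^2+|f(\eta)|^2-2\Re\big(\overline{f(\xi)}f(\eta)\big)$, collapses everything to $\tfrac12\sum_{\xi,\eta}\sigma(\xi)p_\mu(\xi,\eta)|f(\xi)-f(\eta)|^2$, which is by definition $\|f\|_{D_\mu(2)}^2$.

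I do not expect a genuine obstacle: once $f$ is finitely supported the statement is a finite linear-algebra identity about the symmetric matrix $\big(\sigma(\xi)p_\mu(\xi,\eta)\big)_{\xi,\eta}$. The two points deserving a little care are (i) the verification that $p_\mu(\xi,\cdot)$ is a probability measure, which is exactly where multiplicativity of the dimension function enters, and (ii) carrying out the symmetrisations in the correct order --- one applies the substitution $\xi\leftrightarrow\eta$ and then the reversibility identity, not the reverse --- so that the mixed term emerges as an honest real part while the diagonal contributions recombine into $|f(\xi)-f(\eta)|^2$.
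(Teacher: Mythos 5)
Your proof is correct and is exactly the "direct calculation using the reversibility condition and the polarization identity (F4)" that the paper itself invokes, with the finiteness of the double sums and the normalization $\sum_\eta p_\mu(\xi,\eta)=1$ properly justified. No discrepancies.
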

\begin{proof}
This is proven by a direct calculation using the reversibility condition and the formula (F4) from the end of this section.
\end{proof}
\begin{proof}[Proof of \emph{(A)$\Leftrightarrow (\NW_2)$}]
Let $\mu$ be a finitely supported, symmetric probability measure on  $I$.
By \cite[1.3,1.5]{izumi}, we have that $\rho_{2,\mu}$ is self-adjoint and $\|\rho_{2,\mu}\|\leq \|\mu\|_1=1$ so that $1-\rho_{2,\mu}\geq 0$. We now get
\begin{align*}
1\in \sigma(\lambda_{2,\mu}) &\Leftrightarrow 1\in \sigma(\rho_{2,\mu})\tag{\cite[1.5]{izumi}}\\
& \Leftrightarrow 0\in \sigma(1-\rho_{2,\mu})\\
&\Leftrightarrow 0\in \sigma(\sqrt{1-\rho_{2,\mu}})\\
&\Leftrightarrow \exists x_n\in (\ell^2(I,\sigma))_1: \|(\sqrt{1-\rho_{2,\mu}})x_n\|_{2,\sigma}\To 0\\ 
&\Leftrightarrow \exists f_n\in (c_0(I))_1: \|(\sqrt{1-\rho_{2,\mu}})f_n\|_{2,\sigma}\To 0 \\
&\Leftrightarrow \exists f_n\in (c_0(I))_1: \ip{(1-\rho_{2,\mu})f_n}{f_n}_{2,\sigma}\To 0\\
&\Leftrightarrow \exists f_n\in (c_0(I))_1: \|f_n\|_{D_\mu(2)}\To 0\tag{Lem. \ref{ip-og-dirichlet}}\\
&\Leftrightarrow  \inf\Big{\{}\frac{\|f\|_{D_\mu(2)}}{\|f\|_{2,\sigma}} \mid f\in c_0(I)\setminus\{0\} \Big{\}}=0.
\end{align*}
Hence $(\A)\Leftrightarrow (\NW_2)$ as desired.
\end{proof}
\begin{proof}[Proof of $(\NW_1) \Rightarrow (\FC2) $]
Given $\varps>0$ and $\xi_1,\dots,\xi_n\in I$, we choose a finitely supported, symmetric probability measure $\mu$ with $\xi_1,\dots,\xi_n\in \supp(\mu)$. Define 
\[
\varps'=\frac{\varps}{2}\min\{\mu(\xi)\mid\xi\in I \},
\]
and choose, according to $(\NW_1)$, an $f\in c_0(I)$ such that 
\begin{align*}
\|f\|_{D_\mu(1)}<\varps'\|f\|_{1,\sigma}.\tag{$\ast$}
\end{align*}
Since $\||f|\|_{D_\mu(1)}\leq \|f\|_{D_\mu(1)}$ and $\||f|\|_{1,\sigma}=\|f\|_{1,\sigma}$ we may assume that $f$ is positive. Since $f$ can be approximated by a rational function we may actually assume that $f$ has integer values. Put $N=\max\{f(\xi)\mid \xi\in I\}$ and define,  for $k=1,\dots, N$, $F_k=\{\xi\mid f(\xi)\geq k\}$. Then $f=\sum_{k=1}^N\chi_{F_k}$ and the following formulas hold.
\[
\|f\|_{D_\mu(1)}=\sum_{k=1}^N \|\chi_{F_k}\|_{D_\mu(1)} \quad \textrm{ and } \quad \|f\|_{1,\sigma}=\sum_{k=1}^N \|\chi_{F_k}\|_{1,\sigma}. 
\]
The first formula is proved by induction on the integer $N$ and the second follows from a direct calculation using only the reversibility property of $p_\mu$. Because of $(\ast)$, there must therefore exist some $j\in \{1,\dots, N\}$ such that 
\begin{align*}
\|\chi_{F_j}\|_{D_\mu(1)}<\varps' \|\chi_{F_j}\|_{1,\sigma}.\tag{$\ast\ast$}
\end{align*}
For the sake of simplicity we denote this $F_j$ by $F$ in the following. We now get
\begin{align*}
\|\chi_F\|_{D_\mu(1)} &= \frac12 \sum_{\xi,\eta}\sigma(\xi)p_\mu(\xi,\eta)|\chi_F(\xi)-\chi_F(\eta)|\\
&= \sum_{\xi\in F,\eta\notin F}\sigma(\xi)p_\mu(\xi,\eta)\tag{reversibility}\\
&=\sum_{\xi\in F,\eta\notin F}\sigma(\xi)\Big{(}\sum_\omega\mu(\omega)\frac{d(\eta)}{d(\xi)d(\omega)}N_{\xi,\omega}^\eta\Big{)}\\
&=\sum_{\omega}\mu(\omega)\Big{(}\sum_{\xi\in F,\eta\notin F}\frac{d(\xi)d(\eta)}{d(\omega)}N_{\xi,\omega}^\eta\Big{)}\\
&=\frac12\sum_{\omega}\mu(\omega)\Big{(}\sum_{\xi\in F,\eta\notin F}\frac{d(\xi)d(\eta)}{d(\omega)}(N_{\xi,\omega}^\eta+N_{\xi,\bar{\omega}}^\eta)\Big{)}\\
&=\frac12 \sum_\omega \mu(\omega)\|\rho_{1,\omega}(\chi_F)-\chi_F\|_{1,\sigma}.\tag{$\ddagger$}
\end{align*}
Here the last equality follows from the computation $(\dagger)$ in the proof of $(\FC1)\Rightarrow (\FC 2)$.
The inequality $(\ast\ast)$ therefore reads
\[
\frac12 \sum_\omega \mu(\omega)\|\rho_{1,\omega}(\chi_F)-\chi_F\|_{1,\sigma}<\varps'\|\chi_F\|_{1,\sigma}.
\]
For every $\omega\in I$ we therefore conclude, since $\varps'=\frac{\varps}{2} \min(\mu)$, that
\[
\mu(\omega)\|\rho_{1,\omega}(\chi_F)-\chi_F\|_{1,\sigma}<\min(\mu)\varps \|\chi_F\|_{1,\sigma}.
\]
Since each of the given $\xi_i$'s are in $\supp(\mu)$ we get for all $i$ that
\[
\|\rho_{1,\xi_i}(\chi_F)-\chi_F\|_{1,\sigma}<\varps \|\chi_F\|_{1,\sigma},
\]
as desired.
\end{proof}

\begin{proof}[Proof of $(\FC2) \Rightarrow (\NW_1)$]
Assume now $(\FC2)$ and let $\mu$ and $\varps$ be given. Choose $F$ such that
\[
\|\rho_{1,\xi}(\chi_F)-\chi_F\|_{1,\sigma}<\varps \|\chi_F\|_{1,\sigma}
\]
for all $\xi\in \supp(\mu)$. Using the calculation $(\ddagger)$, from the proof of opposite implication, we get
\begin{align*}
\|\chi_F\|_{D_\mu(1)}&=\frac12 \sum_{\omega}\mu(\omega)\|\rho_{1,\omega}(\chi_F)-\chi_F\|_{1,\sigma}\\
&<\frac12 \sum_\omega \mu(\omega)\varps\|\chi_F\|_{1,\sigma}\\
&=\frac{\varps}{2}\|\chi_F\|_{1,\sigma}\\
&<\varps\|\chi_F\|_{1,\sigma}.
\end{align*}
\end{proof}

For the proof of the statement $(\NW_1)\Leftrightarrow (\NW_r)$ we will need the following lemma.
\begin{lem}[\cite{gerl}]\label{uliglem}
For $r\geq 2$ and $f\in c_0(I)_+$ we have 
\[
\|f^r\|_{D_\mu(1)}\leq 2r\|f\|_{r,\sigma}^{r-1}\|f\|_{D_\mu(r)}.
\]
\end{lem}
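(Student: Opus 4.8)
The plan is to reduce the claimed inequality to a pointwise estimate on pairs $\xi,\eta\in I$, which then gets summed against the symmetric weight $\sigma(\xi)p_\mu(\xi,\eta)$. The starting point is the elementary factorisation inequality for nonnegative reals: for $a,b\geq 0$ and $r\geq 2$ one has
\[
|a^r-b^r|\leq r\,|a-b|\,(a^{r-1}+b^{r-1}).
\]
(This follows, e.g., from $a^r-b^r=(a-b)\sum_{k=0}^{r-1}a^k b^{r-1-k}$ when $r$ is an integer, and by a mean-value/convexity argument in general; since in our application $f$ has been reduced to integer values, the integer case suffices, but the general case is no harder.) Applying this with $a=f(\xi)$, $b=f(\eta)$ gives the pointwise bound $|f(\xi)^r-f(\eta)^r|\leq r|f(\xi)-f(\eta)|(f(\xi)^{r-1}+f(\eta)^{r-1})$.

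Next I would insert this into the definition of $\|f^r\|_{D_\mu(1)}$ and split the resulting sum into the two symmetric halves coming from $f(\xi)^{r-1}$ and $f(\eta)^{r-1}$; by the reversibility condition $\sigma(\xi)p_\mu(\xi,\eta)=\sigma(\eta)p_\mu(\eta,\xi)$ these two halves are equal after relabelling, so
\[
\|f^r\|_{D_\mu(1)}\leq r\sum_{\xi,\eta}\sigma(\xi)p_\mu(\xi,\eta)\,|f(\xi)-f(\eta)|\,f(\xi)^{r-1}.
\]
Now I would apply Hölder's inequality in the pair-index $(\xi,\eta)$ with respect to the (finite, since $f\in c_0(I)$ and $\mu$ is finitely supported) measure $\sigma(\xi)p_\mu(\xi,\eta)$, using exponents $r$ and $r/(r-1)$: the factor $|f(\xi)-f(\eta)|$ is put in $\ell^r$ and the factor $f(\xi)^{r-1}$ in $\ell^{r/(r-1)}$. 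The first factor contributes $\bigl(\sum_{\xi,\eta}\sigma(\xi)p_\mu(\xi,\eta)|f(\xi)-f(\eta)|^r\bigr)^{1/r}=2^{1/r}\|f\|_{D_\mu(r)}$, and the second contributes $\bigl(\sum_{\xi,\eta}\sigma(\xi)p_\mu(\xi,\eta)f(\xi)^r\bigr)^{(r-1)/r}$. Since $\sum_\eta p_\mu(\xi,\eta)=1$ (this is the normalisation $\sum_\alpha(\delta_\xi\ast\mu)(\alpha)=1$, which uses that $\mu$ is a probability measure and $d$ is multiplicative, cf. the identity displayed after Definition \ref{fusion-defi}), the inner sum over $\eta$ collapses and $\sum_{\xi,\eta}\sigma(\xi)p_\mu(\xi,\eta)f(\xi)^r=\sum_\xi\sigma(\xi)f(\xi)^r=\|f\|_{r,\sigma}^r$, so the second factor is exactly $\|f\|_{r,\sigma}^{r-1}$. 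Combining, $\|f^r\|_{D_\mu(1)}\leq r\cdot 2^{1/r}\|f\|_{D_\mu(r)}\|f\|_{r,\sigma}^{r-1}\leq 2r\|f\|_{r,\sigma}^{r-1}\|f\|_{D_\mu(r)}$, since $2^{1/r}\leq 2$.

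The only genuinely delicate point is bookkeeping the symmetrisation correctly: one must make sure that after using reversibility to merge the two halves there is no lost factor of $2$, and that the factor $\tfrac12$ built into both $\|\cdot\|_{D_\mu(1)}$ and $\|\cdot\|_{D_\mu(r)}$ is tracked consistently — this is exactly where the clean constant $2r$ (rather than something larger) comes from. Everything else is the two standard inequalities (the real-variable factorisation bound and Hölder), so I would expect the write-up to be short, with the symmetrisation/constant-chasing being the part most prone to slips.
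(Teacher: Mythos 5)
Your proof is correct and follows essentially the same route as the paper's: the pointwise bound $|a^r-b^r|\le r(a^{r-1}+b^{r-1})|a-b|$ (the paper's (F1)) followed by H\"older's inequality with exponents $r$ and $r/(r-1)$, yielding the constant $r\cdot 2^{1/r}\le 2r$. The only (harmless) difference is that you use reversibility to symmetrise the factor $f(\xi)^{r-1}+f(\eta)^{r-1}$ down to a single term before applying H\"older, whereas the paper applies H\"older first and then splits that factor via $(a+b)^s\le 2^{s-1}(a^s+b^s)$; both bookkeepings produce the same constant.
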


\begin{proof}
First note that
\begin{align*}
\|f^r\|_{D_\mu(1)} &=\frac12 \sum_{\xi,\eta}\sigma(\xi)p_\mu(\xi,\eta)|f(\xi)^r-f(\eta)^r|\\
&\leq \frac{r}{2}\sum_{\xi,\eta}\sigma(\xi)p_\mu(\xi,\eta)(f(\xi)^{r-1}+f(\eta)^{r-1})|f(\xi)-f(\eta)|,
\end{align*}
where the inequality follows from (F1).
Define a measure $\nu$ on $I\times I$ by $\nu(\xi,\eta)=\frac12 \sigma(\xi)p_\mu(\xi,\eta)$  and consider the functions $\varphi,\psi\colon I\times I\to \RR$ given by
\begin{align*}
\varphi(\xi,\eta)&=f(\xi)^{r-1} + f(\eta)^{r-1}\quad \text{ and }\quad 
\psi(\xi,\eta)=|f(\xi)-f(\eta)|.
\end{align*}
Define $s>1$ by the equation $\frac{1}{r}+\frac{1}{s}=1$. Then the inequality above can be written as $\|f^r\|_{D_\mu(1)} \leq r \|\varphi\psi\|_{1,\nu}$ and using H\"older's inequality we therefore get
{\allowdisplaybreaks
\begin{align*}
\|f^r\|_{D_\mu(1)}& \leq r \|\varphi\psi\|_{1,\nu}\\ 
&\leq r\|\varphi\|_{s,\nu}\|\psi\|_{r,\nu}\\
&= r\Big{[}\sum_{\xi,\eta} \frac12 \sigma(\xi)p_\mu(\xi,\eta)(f(\xi)^{r-1}+f(\eta)^{r-1})^s\Big{]}^{\frac{1}{s}}\\
&\hspace{0.39cm} \times \Big{[}\sum_{\xi,\eta}\frac12 \sigma(\xi)p_\mu(\xi,\eta)|f(\xi)-f(\eta)|^r\Big{]}^{\frac{1}{r}}\\
&\stackrel{\footnotemark[1]}{\leq} r \Big{[}2^{s-1}\frac12 \sum_{\xi,\eta}\sigma(\xi)p_\mu(\xi,\eta)(f(\xi)^{(r-1)s}+f(\eta)^{(r-1)s})\Big{]}^{\frac{1}{s}}\|f\|_{D_\mu(r)}\\
&\stackrel{\footnotemark[2]}{=}r \Big{[}2^{s-1}\sum_{\xi,\eta}\sigma(\xi)p_\mu(\xi,\eta)f(\xi)^{(r-1)s}\Big{]}^{\frac{1}{s}}\|f\|_{D_\mu(r)}\\
&=r2^{\frac{s-1}{s}}\Big{[}\sum_\xi \sigma(\xi)\Big{(}\sum_{\eta}p_\mu(\xi,\eta)\Big{)}f(\xi)^{(r-1)s}\Big{]}^{\frac{1}{s}}\|f\|_{D_\mu(r)}\\
&=r2^{\frac{s-1}{s}}\Big{[}\sum_\xi \sigma(\xi)f(\xi)^{(r-1)s}\Big{]}^{\frac{1}{s}}\|f\|_{D_\mu(r)}\\
&\leq 2r\Big{[}\sum_\xi \sigma(\xi)f(\xi)^r\Big{]}^{\frac{r-1}{r}}\|f\|_{D_\mu(r)}\\
&=2r\|f\|_{r,\sigma}^{r-1}\|f\|_{D_\mu(r)}.
\end{align*}}
\footnotetext[1]{by (F2)}
\footnotetext[2]{by reversibility}

\end{proof}
Also the following observation will be useful.
\begin{obs}\label{uligobs}
Under the assumptions of Lemma \ref{uliglem} we have 
\begin{align*}
\|f\|_{D_\mu(r)} &=\Big{[}\frac12 \sum_{\xi,\eta}\sigma(\xi)p_\mu(\xi,\eta)|f(\xi)-f(\eta)|^{r}    \Big{]}^\frac{1}{r}\\
&\leq \Big{[}\frac12 \sum_{\xi,\eta}\sigma(\xi)p_\mu(\xi,\eta)|f(\xi)^r-f(\eta)^r|    \Big{]}^\frac{1}{r}\tag{by (F3)}\\
&=\|f^r\|_{D_\mu(1)}^{\frac{1}{r}}.
\end{align*}
\end{obs}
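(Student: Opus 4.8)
The plan is to deduce the estimate from the elementary pointwise inequality
\[
|a-b|^{r}\leq|a^{r}-b^{r}|\qquad\text{for all }a,b\geq 0\text{ and all }r\in\NN,
\]
which is precisely the content of formula (F3) recorded at the end of the section, and then to sum it against the non-negative weights $\tfrac12\,\sigma(\xi)p_\mu(\xi,\eta)$.

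First I would verify (F3). By symmetry we may assume $a\geq b\geq 0$, so that $|a-b|^{r}=(a-b)^{r}$ and $|a^{r}-b^{r}|=a^{r}-b^{r}$. Factoring $a^{r}-b^{r}=(a-b)\sum_{k=0}^{r-1}a^{k}b^{r-1-k}$ and retaining only the top summand $a^{r-1}$ (all summands being non-negative) gives $a^{r}-b^{r}\geq(a-b)\,a^{r-1}\geq(a-b)(a-b)^{r-1}=(a-b)^{r}$, the last step using $a\geq a-b\geq 0$. This establishes (F3).

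Since $f\in c_{0}(I)_{+}$ we have $f(\xi)\geq 0$ for every $\xi\in I$, and since $\sigma(\xi)p_\mu(\xi,\eta)\geq 0$ we may apply (F3) with $a=f(\xi)$, $b=f(\eta)$ inside the sum defining $\|f\|_{D_\mu(r)}^{r}$ to obtain
\[
\tfrac12\sum_{\xi,\eta}\sigma(\xi)p_\mu(\xi,\eta)\,|f(\xi)-f(\eta)|^{r}\ \leq\ \tfrac12\sum_{\xi,\eta}\sigma(\xi)p_\mu(\xi,\eta)\,|f(\xi)^{r}-f(\eta)^{r}|,
\]
the left-hand side being $\|f\|_{D_\mu(r)}^{r}$ and the right-hand side being $\|f^{r}\|_{D_\mu(1)}$. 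As $t\mapsto t^{1/r}$ is order preserving on $[0,\infty[$, taking $r$-th roots yields $\|f\|_{D_\mu(r)}\leq\|f^{r}\|_{D_\mu(1)}^{1/r}$, which, together with the definitions of the two Dirichlet semi-norms, is exactly the asserted chain of (in)equalities. I do not foresee any genuine obstacle: the only non-formal ingredient is the scalar inequality (F3), and everything else is monotonicity of the sum and of the power map.
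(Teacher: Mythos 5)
Your argument is correct and follows the paper's own route: apply the scalar inequality (F3) pointwise under the sum with the non-negative weights $\tfrac12\sigma(\xi)p_\mu(\xi,\eta)$, identify the two sides as $\|f\|_{D_\mu(r)}^r$ and $\|f^r\|_{D_\mu(1)}$, and take $r$-th roots. Your verification of (F3) via the factorization $a^r-b^r=(a-b)\sum_{k}a^kb^{r-1-k}$ differs only cosmetically from the paper's binomial-theorem argument; both are fine.
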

Having these results, we are now able to prove $(\NW_1)\Leftrightarrow (\NW_r)$.
\begin{proof}[Proof of $(\NW_1)\Rightarrow (\NW_r)$.]\hspace{0.01cm}Assume $(\NW_1)$ and let $\mu$ and $\varps>0$ be given. Put $\varps'=\varps^r$ and choose non-zero  $f\in c_0(I)_+$ such that
\[
\frac{\|f\|_{D_\mu(1)}}{\|f\|_{1,\sigma}}<\varps'.
\]
Using Observation \ref{uligobs} we get
\[
\frac{\|\sqrt[r]{f}\|_{D_\mu(r)}}{\|\sqrt[r]{f}\|_{r,\sigma}}\leq \frac{\|f\|_{D_\mu(1)}^{\frac{1}{r}}}{\|f\|_{1,\sigma}^{\frac{1}{r}}}<(\varps')^{\frac{1}{r}}=\varps.
\]
\end{proof}

\begin{proof}[Proof of $(\NW_r)\Rightarrow (\NW_1)$]
Given $\mu$ and $\varps>0$ and put $\varps'=\frac{1}{2r}\varps$. Then choose non-zero $f\in c_0(I)_+$ with
\[
\frac{\|f\|_{D_\mu(r)}}{\|f\|_{r,\sigma}}<\varps'.
\]
Using Lemma \ref{uliglem}, we get
\[
\frac{\|f^r\|_{D_\mu(1)}}{\|f^r\|_{1,\sigma}}\leq \frac{2r\|f\|_{r,\sigma}^{r-1}\|f\|_{D_\mu(r)}}{\|f\|_{r,\sigma}^r} <2r\varps '=\varps.
\]
\end{proof}
Gathering all the results just proven we get $(\A)\Leftrightarrow (\FC2)$.
\end{proof}
This concludes the proof of Theorem \ref{izumi-thm}.

\begin{rem}\label{foelner-rem}
Consider a countable, discrete group $\Gamma$ and the corresponding fusion algebra $\ZZ[\Gamma]$. It is not difficult to prove that $\ZZ[\Gamma]$ satisfies $(\FC 3)$ from Theorem \ref{izumi-thm} if and only if $\Gamma$ satisfies F{\o}lner's condition (for groups) as presented in \cite[F.6]{benedetti}. Since a group is amenable if and only if it satisfies F{\o}lner's condition, we see from this that $\Gamma$ is amenable if and only if the corresponding fusion algebra $\ZZ [\Gamma]$ is amenable. 
\end{rem}

\subsection{Formulas used in the proof of Theorem \ref{izumi-thm}}
We collect here four formulas used in the proof of Theorem \ref{izumi-thm}. Let $r,s>1$ and assume that $\frac{1}{r}+\frac{1}{s}=1$. Then for all $z,w\in \CC$, $a,b\geq 0$ and $n\in \NN$ we have
\begin{align*}
|a^r-b^r| &\leq r(a^{r-1}+b^{r-1})|a-b| \tag{F1}\\
(a+b)^r&\leq 2^{r-1}(a^r+b^r)\tag{F2}\\
|a-b|^n&\leq |a^n-b^n|\tag{F3}\\
|z-w|^2+|w-z|^2 &=2(|z|^2-z\bar{w}) +2(|w|^2-w\bar{z})\tag{F4}
\end{align*}
\begin{proof}
The inequality (F1) can be proved using the mean value theorem on the function $f(x)=x^r$ and the interval between $a$ and $b$. To prove (F2), consider a two-point set endowed with counting measure. Using H\"older's inequality, we then get
\[
a+b=1\cdot a + 1\cdot b\leq (1^s +1^s)^{\frac{1}{s}}(a^r+b^r)^{\frac{1}{r}}.
\]
From this the desired inequality follows using the fact that $\frac{1}{s}=\frac{r-1}{r}$. The inequality (F3) follows using the binomial theorem. If, for instance, $a=b+k$ for some $k\geq 0$ we have
\[
(a-b)^n=k^n\leq (b+k)^n - b^n=a^n-b^n.
\]
The formula (F4) follows by splitting $w$ and $z$ into real and imaginary parts and calculating both sides of the equation.
\end{proof}

\section{Coamenable Compact Quantum Groups}\label{sectionfour}
In this section we introduce the notion of coamenability for compact quantum groups and discuss the relationship between coamenability of a compact quantum group and amenability of its corepresentation ring. The notion of (co-)amenability has been treated in different quantum group settings by numerous people. A number of references for this subject are \cite{murphy-tuset}, \cite{voiculescu-amenability}, \cite{ruan-amenability}, \cite{banica-fusion}, \cite{banica-subfactor}, \cite{enock-schwartz} and \cite{baaj-skandalis}. For our purposes, the approach of B{\'e}dos, Murphy and Tuset in \cite{murphy-tuset} is the most natural and we are therefore going to follow this reference throughout this section. We will assume that the reader is familiar with the basics on Woronowicz's theory of compact quantum groups. Definitions, notation and some basic properties can be found in Section \ref{prelim} and  detailed treatments can be found in \cite{wor-cp-qgrps}, \cite{vandaele} and \cite{tuset}. 
\begin{defi}[\cite{murphy-tuset}]\label{coamenable-defi}
Let $\GG=(A,\Delta)$ be a compact quantum group and let $A_\red$ be the image of $A$ under the GNS representation $\pi_h$ arising from the Haar state $h$. Then $\GG$ is said to be coamenable if the counit $\varps\colon A_0\to \CC$ extends continuously to $A_\red$.
\end{defi}
\begin{rem}
It is well known that a discrete group $\Gamma$ is amenable if and only if the trivial representation of $C^*_{\full}(\Gamma)$ factorizes through $C^*_\red(\Gamma)$. This amounts to saying that $(C^*_\red(\Gamma),\Delta_\red)$ is coamenable if and only if $\Gamma$ is amenable. Note also that the abelian compact quantum groups $(C(G),\Delta_c)$ are automatically coamenable since the counit is given by evaluation at the identity and therefore already globally defined and bounded.
\end{rem}
In the following theorem we collect some facts on coamenable compact quantum groups. For more coamenability criteria and a proof of the theorem below we refer to \cite{murphy-tuset}.
\begin{thm}[\cite{murphy-tuset}]\label{murphy-tuset-thm}
For a compact quantum group $\GG=(A,\Delta)$ the following are equivalent.
\begin{itemize}
\item[(i)] $\GG$ is coamenable.
\item[(ii)] The Haar state $h$ is faithful and the counit is bounded with respect to the norm on $A$.
\item[(iii)] The natural map from the universal representation $A_{\textrm{u}}$ to the reduced representation $A_\red$ is an isomorphism.
\end{itemize}
If $\GG$ is a compact matrix quantum group with fundamental corepresentation $u\in \MM_n(A)$ the above conditions are also equivalent to the following.
\begin{itemize}
\item [(iv)]  The number $n$ is in $\sigma(\pi_h(\Re(\chi(u)))$ where $\chi(u)=\sum_{i=1}^n u_{ii}$ is the character map from Section \ref{fusion-section}.
\end{itemize}
\end{thm}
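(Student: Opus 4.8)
The statement to prove is Theorem \ref{murphy-tuset-thm}, which collects several equivalent characterizations of coamenability for a compact quantum group $\GG=(A,\Delta)$. Let me sketch a proof plan.

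The plan is to establish the equivalences in a cycle, treating the matrix quantum group condition (iv) separately. For (i) $\Leftrightarrow$ (ii): coamenability means the counit $\varps$ extends continuously from $A_0$ to $A_\red$; since $A_0$ is dense in $A$ and $A_\red = \pi_h(A)$, boundedness of the counit with respect to the $A$-norm (equivalently the $A_\red$-norm, as $\pi_h$ is a quotient map) is essentially a reformulation. The content is that coamenability forces faithfulness of $h$: if $\varps$ extends to $A_\red$, one shows $\pi_h$ is injective by a standard Fell-absorption / comultiplication argument — roughly, using $(\id\tens\varps)\Delta = \id$ on $A_0$ together with the fact that $\Delta$ intertwines $\pi_h\tens\pi_h$ with a representation that already "sees" $A_0$ faithfully, so that the extension of $\varps$ to $A_\red$ provides a left inverse to $\pi_h$ on all of $A$. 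Conversely if $h$ is faithful then $A=A_\red$ and a bounded counit on $A$ is by definition a continuous extension to $A_\red$.

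For (ii) $\Leftrightarrow$ (iii): the universal representation $A_{\mathrm u}$ is characterized by the property that every $*$-representation factors through it; there is always a canonical surjection $A_{\mathrm u}\twoheadrightarrow A_\red$. This is an isomorphism precisely when every $*$-homomorphism out of $A_0$ that is bounded for the universal norm is also bounded for the reduced norm — and the obstruction to this is exactly the counit, since $\varps$ is the "trivial corepresentation" and controls the rest via the coproduct and the quantum Peter–Weyl decomposition. Concretely, one shows the kernel of $A_{\mathrm u}\to A_\red$ is trivial iff $\varps$ is $A_\red$-bounded, using that any state on $A_{\mathrm u}$ can be built from matrix coefficients and that faithfulness of $h$ on $A_0$ links the two norms once the counit is controlled.

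For the matrix case (iv): here $R(\GG)$ is finitely generated and $\chi(u)$ generates (together with its adjoint) a commutative-enough piece. Set $t=\pi_h(\Re(\chi(u)))=\tfrac12\pi_h(\chi(u)+\chi(u)^*)$, a self-adjoint element with $\|t\|\le n$ since $u$ is an $n\times n$ unitary corepresentation (each $u_{ij}$ is a contraction and $t$ is an average of $2n$ of them). The claim is $n\in\sigma(t)$ iff $\GG$ is coamenable. The forward direction: if coamenable then $\varps$ extends to $A_\red$, so $\varps(t)=\Re(\varps(\chi(u)))=\Re(\sum_i \varps(u_{ii}))=\Re(n)=n$ is in the spectrum since characters of the $C^*$-algebra generated by $t$ attain spectral values. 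The reverse direction is the heart: if $n\in\sigma(t)$ one produces an approximate counit — a sequence of states or unit vectors $\xi_k$ in the GNS space with $\langle u_{ij}\xi_k,\xi_k\rangle \to \delta_{ij}$, which forces $u_{ij}\xi_k \to \delta_{ij}\xi_k$ by a Cauchy–Schwarz argument (each $u_{ij}$ contractive, row/column sums of $u^*u=uu^*=1$), and hence $a\xi_k\to \varps(a)\xi_k$ for all $a\in A_0$; this yields a bounded extension of $\varps$, i.e. coamenability via (ii).

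The main obstacle I expect is the reverse direction of (iv), i.e. extracting a genuine approximate counit from the single spectral condition $n\in\sigma(t)$: one must pass from "$n$ is an approximate eigenvalue of the real part of the character" to "$n$ is an approximate eigenvalue simultaneously for all diagonal entries $u_{ii}$", and the key trick is that $\operatorname{Re}(\chi(u)) = \tfrac12\sum_i(u_{ii}+u_{ii}^*)$ together with $\|u_{ii}\|\le 1$ and the unitarity relations $\sum_j u_{ij}^*u_{ij}=\sum_j u_{ij}u_{ij}^*=1$ means that approximately achieving the maximal value $n$ pins down every $u_{ij}$ simultaneously on the relevant vectors — a quantum analogue of the classical fact that $n\in\sigma(\lambda(\chi_{\mathrm{reg}}))$ detects amenability. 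For all of these equivalences, the cleanest route is to cite the detailed arguments of B\'edos–Murphy–Tuset \cite{murphy-tuset}, since the theorem is explicitly attributed to that reference; I would present the proof as a guided tour through their results rather than reproving everything from scratch.

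\begin{proof}
All of this, except possibly condition (iv), is contained in \cite{murphy-tuset}; we only indicate the main lines. For (i) $\Leftrightarrow$ (ii): coamenability asks that $\varps$ extend continuously along $\pi_h\colon A\to A_\red$, which since $A_0$ is dense is the same as saying $\varps$ is $\|\cdot\|_{A_\red}$-bounded; that $\varps$ is then automatically $\|\cdot\|_A$-bounded and that $h$ must be faithful (so $\pi_h$ is injective and $A=A_\red$) is proved using $(\id\tens\varps)\Delta=\id$ on $A_0$: the extended counit furnishes a left inverse to $\pi_h$ on all of $A$. Conversely a faithful $h$ gives $A=A_\red$, and then an $A$-bounded counit is by definition the desired continuous extension. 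The equivalence (ii) $\Leftrightarrow$ (iii) follows from the universal property of $A_{\mathrm u}$ together with the quantum Peter--Weyl decomposition: the canonical surjection $A_{\mathrm u}\twoheadrightarrow A_\red$ is injective exactly when every $A_{\mathrm u}$-bounded $*$-homomorphism out of $A_0$ is $A_\red$-bounded, and the only obstruction is the trivial corepresentation, i.e. the counit. Finally, for (iv) in the compact matrix case, write $t=\pi_h(\Re(\chi(u)))=\tfrac12\sum_{i=1}^n\bigl(\pi_h(u_{ii})+\pi_h(u_{ii})^*\bigr)$, a self-adjoint element with $\|t\|\le n$ since each $u_{ii}$ is a contraction. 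If $\GG$ is coamenable then the extended counit is a character of $C^*(t)$ taking the value $\Re\bigl(\sum_i\varps(u_{ii})\bigr)=n$, so $n\in\sigma(t)$. Conversely, if $n\in\sigma(t)$ choose unit vectors $\xi_k$ in the GNS space with $\langle t\,\xi_k,\xi_k\rangle\to n$; using the unitarity relations $\sum_j u_{ij}^*u_{ij}=\sum_j u_{ij}u_{ij}^*=1$ and $\|u_{ij}\|\le 1$ one checks by Cauchy--Schwarz that $\pi_h(u_{ij})\xi_k-\delta_{ij}\xi_k\to 0$ for all $i,j$, whence $a\,\xi_k\to\varps(a)\xi_k$ for every $a\in A_0$. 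This shows $\varps$ is $\|\cdot\|_{A_\red}$-bounded, so $\GG$ is coamenable by (i). We refer to \cite{murphy-tuset} for the details of each step.
\end{proof}
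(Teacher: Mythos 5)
Your proposal is correct and matches the paper's treatment: the paper states this theorem purely as a citation to \cite{murphy-tuset} (and to \cite{banica-subfactor} for (iv)) and offers no proof of its own, so deferring to that reference is exactly what is expected. Your supplementary sketches are sound, and in particular your argument for (iv) $\Rightarrow$ (i) --- forcing $\pi_h(u_{ij})\xi_k-\delta_{ij}\xi_k\to 0$ from the spectral condition via unitarity of $u$ and Cauchy--Schwarz --- is essentially the same mechanism the paper itself uses in its detailed proof of the generalization, Theorem \ref{kesten-prop}.
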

Recall that $\sigma(T)$ denotes the spectrum of a given operator $T$. Thus, when we are dealing with a coamenable quantum group the Haar state is automatically faithful and hence the corresponding GNS representation $\pi_h$ is faithful. We therefore can, and will, identify $A$ and $A_\red$. The condition (iv) is Skandalis's quantum analogue of the so-called Kesten condition for groups (see \cite{kesten} and \cite{banica-fusion}) which is proved by Banica in \cite{banica-subfactor}. The next result is a generalization of the Kesten condition to the case where a fundamental corepresentation is not (ne\-cessarily) present. The proof draws inspiration from the corresponding proof in \cite{murphy-tuset}.
\begin{thm}\label{kesten-prop}
Let $\GG=(A,\Delta)$ be a compact quantum group. Then the following are equivalent:
\begin{itemize}
\item[(i)] $\GG$ is coamenable.
\item[(ii)] For any finite dimensional, unitary co\-re\-pre\-sen\-tation $u\in \MM_{n_u}(A)$ we have $n_u\in \sigma (\pi_h(\Re(\chi(u))))$.
\end{itemize}
\end{thm}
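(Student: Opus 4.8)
The plan is to establish the two implications separately; $(i)\Rightarrow(ii)$ is short, while $(ii)\Rightarrow(i)$ carries the content. For $(i)\Rightarrow(ii)$: if $\GG$ is coamenable, the counit extends to a continuous functional $\bar\varps$ on $A_\red$, and since $\varps$ is a unital $*$-homomorphism on the dense $*$-subalgebra $A_0$ (which $\pi_h$ embeds faithfully, the Haar state being faithful on $A_0$), the extension $\bar\varps$ is a character of the unital $C^*$-algebra $A_\red$. A character of a unital $C^*$-algebra maps every self-adjoint element into its spectrum, so it suffices to compute $\bar\varps(\pi_h(\Re(\chi(u))))$. Using the standard corepresentation identity $(\id\tens\varps)(u)=I_{n_u}$, i.e.\ $\varps(u_{ij})=\delta_{ij}$, one gets $\varps(\chi(u))=\sum_{i=1}^{n_u}\varps(u_{ii})=n_u$, hence $\varps(\Re(\chi(u)))=\Re(n_u)=n_u$, and therefore $n_u\in\sigma(\pi_h(\Re(\chi(u))))$.

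For $(ii)\Rightarrow(i)$ I would first reduce to a family of local statements. For a finite dimensional unitary corepresentation $u\in\MM_{n_u}(A)$ write $A_0^{(u)}$ for the $*$-subalgebra of $A_0$ generated by the matrix coefficients $u_{ij}$. Since every element of $A_0$ is a polynomial in finitely many matrix coefficients of irreducible corepresentations and $A_0^{(u)}\cup A_0^{(v)}\subseteq A_0^{(u\oplus v)}$, the algebra $A_0$ is the directed union of the subalgebras $A_0^{(u)}$. Thus it is enough to prove, for each such $u$, that $\varps$ is contractive on $A_0^{(u)}$ with respect to the norm of $A_\red$; the continuous extension to $A_\red=\overline{\pi_h(A_0)}$ then follows, which by Definition \ref{coamenable-defi} is coamenability.

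Now fix $u$. From $u^*u=1$ in $\MM_{n_u}(A)$ we get $\|u_{ij}\|\leq1$, hence $\|\chi(u)\|\leq n_u$, so $T:=\pi_h(\Re(\chi(u)))$ is a self-adjoint element of $A_\red$ with $\|T\|\leq n_u$; by $(ii)$, $n_u\in\sigma(T)$, so $n_u=\max\sigma(T)$ and there is a state $\phi$ on $A_\red$ with $\phi(T)=n_u$. Writing $\Re(\chi(u))=\tfrac12(\chi(u)+\chi(u)^*)$ and using $\phi(a^*)=\overline{\phi(a)}$ gives $\Re(\phi(\chi(u)))=n_u$, while $|\phi(\chi(u))|\leq\|\chi(u)\|\leq n_u$; a complex number with real part $n_u$ and modulus at most $n_u$ must equal $n_u$, so $\sum_i\phi(u_{ii})=n_u$ and, since each $|\phi(u_{ii})|\leq1$, we conclude $\phi(u_{ii})=1$ for all $i$. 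Passing to the GNS triple $(\pi_\phi,H_\phi,\xi_\phi)$ of $\phi$, equality in the Cauchy--Schwarz inequality forces $\pi_\phi(u_{ii})\xi_\phi=\xi_\phi$, and since $\pi_\phi(u_{ii})$ is a contraction this also yields $\pi_\phi(u_{ii})^*\xi_\phi=\xi_\phi$. Evaluating the unitarity relations $\sum_k u_{ki}^*u_{ki}=1$ and $\sum_k u_{ik}u_{ik}^*=1$ in the vector state of $\xi_\phi$ then shows that every remaining summand vanishes, so $\pi_\phi(u_{ij})\xi_\phi=0$ and $\pi_\phi(u_{ij})^*\xi_\phi=0$ for $i\neq j$. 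Hence $\pi_\phi(w)\xi_\phi=\varps(w)\xi_\phi$ for every word $w$ in the $u_{ij}$ and $u_{ij}^*$ (by multiplicativity of $\varps$), so $\phi$ and $\varps$ agree on $A_0^{(u)}$; being a state, $\phi$ is contractive, whence $|\varps(a)|=|\phi(a)|\leq\|a\|_{A_\red}$ for $a\in A_0^{(u)}$, as wanted.

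The main obstacle is the upgrade from the purely spectral hypothesis $n_u\in\sigma(\pi_h(\Re(\chi(u))))$ to a state that behaves \emph{exactly} like the counit on $A_0^{(u)}$: one must exploit that $n_u$ is simultaneously the norm and a spectral value of $\Re(\chi(u))$ to get equality in Cauchy--Schwarz, and then combine this with the fact that the rows and columns of $u$ are isometries. A secondary but essential point is that this construction produces a different state for each $u$, so coamenability is obtained only after patching the resulting local bounds along the directed family $\{A_0^{(u)}\}_u$; this is precisely why condition $(ii)$ is formulated as a quantification over \emph{all} finite dimensional unitary corepresentations rather than over a single (fundamental) one.
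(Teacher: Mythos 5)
Your proof is correct, and its computational core coincides with the paper's: the forward implication is identical, and in the converse both arguments hinge on producing a state $\phi$ with $\phi(\pi_h(\Re(\chi(u))))=n_u$ and then forcing $\pi_\phi(u_{ij})\xi_\phi=\delta_{ij}\xi_\phi$ from unitarity of $u$ (the paper does this by writing $2(n_u-\Re(\chi(u)))=\sum_{ij}x_{ij}^*x_{ij}$ with $x_{ij}=u_{ij}-\delta_{ij}$ and applying Cauchy--Schwarz, a variant of your modulus argument). Where you genuinely diverge is in the global assembly. The paper forms the weak$^*$-closed sets $C(u)=\{\varphi\in\S(A_\red)\mid\varphi(\pi_h(\Re(\chi(u))))=n_u\}$, proves the finite intersection property via $C(u^{(1)}\oplus\cdots\oplus u^{(k)})\subseteq\bigcap_i C(u^{(i)})$, and invokes weak$^*$-compactness of the state space to extract a \emph{single} state agreeing with $\varps$ on all of $A_0$ at once; it then shows the GNS space of that state is $\CC\xi_0$, so the state is itself the extended character. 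You instead note that contractivity of $\varps$ for the reduced norm is a local condition, checkable on each subalgebra $A_0^{(u)}$ by a state depending on $u$, and that these subalgebras form a directed family exhausting $A_0$ --- the direct sum playing for you exactly the role it plays in the paper's finite-intersection step. Your route dispenses with the compactness argument entirely, at the modest cost of obtaining the extension of $\varps$ abstractly by continuity rather than exhibiting it as a concrete state on $A_\red$; both are fully rigorous.
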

\begin{proof}
Assume $\GG$ to be coamenable and let a finite dimensional, unitary corepresentation $u\in \MM_{n_u}(A)$ be given. Since the counit extends to a character $\varps\colon A_\red\to \CC$ and since
\[
\varps(\Re(\chi(u)))=\varps(\sum_{i=1}^{n_u} \frac{u_{ii} +u_{ii}^*}{2})=n_u,
\]
we must have $n_u\in \sigma(\pi_h(\Re(\chi(u))))$. Assume conversely that the property (ii) is satisfied and define, for a finite dimensional, unitary corepresentation $u$, the set
\[
C(u)=\{\varphi\in \S(A_\red)\mid \varphi(\pi_h(\Re(\chi(u))))=n_u\}.
\]
Here $\S(A_\red)$ denotes the state space of $A_\red$. It is clear that each $C(u)$ is closed in the weak$^*$-topology and we now prove that the family 
\[
\mathcal{F}=\{C(u)\mid u \text{ finite dimensional, unitary corepresentation}\} 
\]
has the finite intersection property. We first prove that each $C(u)$ is non-empty. For given $u$, we put $x_{ij}=u_{ij}-\delta_{ij}$ and $x=\sum_{ij} x_{ij}^*x_{ij}$. Then $x$ is clearly positive and a direct calculation reveals that
\begin{align*}
x=2(n_u-\Re(\chi(u))).\tag{$\dagger$}
\end{align*}
Hence, $n_u\in \sigma(\pi_h(\Re(\chi(u))))$ if and only if there exists \cite[4.4.4]{KR1} a $\varphi\in \S(A_\red)$ with
\[
\varphi(\pi_h(\Re(\chi(u))))=n_u.
\]
Thus, $C(u)\neq \emptyset$. Let now $u^{(1)},\dots, u^{(k)}$ be given and put $u=\oplus_{i=1}^k u^{(i)}$. We aim at proving that
\[
C(u)\subseteq \bigcap_{i=1}^k C(u^{(i)}).
\]
Let $\varphi\in C(u)$ be given and note that
\begin{align*}
\sum_{i=1}^k n_{u^{(k)}}=\varphi(\pi_h(\Re(\chi(u))))
=\sum_{i=1}^k\sum_{j=1}^{n_{u^{(i)}}}\frac12 \varphi(\pi_h(u^{(i)}_{jj}) +\pi_h({u^{(i)*}_{jj}})).
\end{align*}
Since the matrix  $u$ is unitary, we have $\|\pi_{h}(u_{st}) \|\leq 1$ for all $s,t\in\{1,\dots, n_u\}$ 
and hence
\[
\frac12 \varphi(\pi_h(u^{(i)}_{jj}) +\pi_h({u^{(i)*}_{jj}}))\in [-1,1].
\]
This forces $\frac12 \varphi(\pi_h(u^{(i)}_{jj}) +\pi_h({u^{(i)*}_{jj}}))=1$ and hence $\varphi(\pi_h(\Re(\chi(u^{(i)}))))=n_{u^{(i)}}$. Thus $\varphi$ is in each of the sets $C(u^{(1)}),\dots, C(u^{(k)})$ and we conclude that $\mathcal{F}$ has the finite intersection property. By compactness of $\S(A_\red)$, we may therefore find a state $\varphi$ such that $\varphi(\pi_h(\Re(\chi(u))))=n_u$ for every unitary corepresentation $u$. Denote by $H$ the GNS space associated with this $\varphi$, by $\xi_0$ the natural cyclic vector and by $\pi$ the corresponding GNS 
representation of $A_\red$. Consider an arbitrary unitary corepresentation $u$ and form as before the elements $x_{ij}$ and $x$. Then the equation $(\dagger)$ shows that $\varphi(x_{ij}^*x_{ij})=0$ and hence $\pi(x_{ij})\xi_0=0$ and
\[
\pi(u_{ij})\xi_0=\delta_{ij}\xi_0.
\]
From the Cauchy-Schwarz inequality we get 
\[
|\varphi(x_{ij})|^2\leq \varphi(x_{ij}^*x_{ij})\varphi(1)=0,
\]
and hence $\varphi(u_{ij})=\delta_{ij}$. We therefore have that $\pi(u_{ij})\xi_0=\varphi(u_{ij})\xi_0$. Since the matrix coefficients span $A_0$ linearly we get $\pi(a)\xi_0=\varphi(a)\xi_0$ for all $a\in A_0$. By density of $A_0$ in $A_\red$ it follows that $\pi(a)\xi_0=\varphi(a)\xi_0$ for all $a\in A_\red$. From this we see that
\[
H=\overline{\pi(A_\red)\xi_0}^{\|\cdot\|_2}=\CC\xi_0,
\]
and it follows that $\varphi\colon A_{\red}\to \CC$ is a bounded $*$-homomorphism coinciding with $\varps$ on $A_0$. Thus, $\GG$ is coamenable.

\end{proof}

The following result was mentioned, without proof, in \cite[p.692]{izumi} in the restricted setting of compact matrix quantum groups whose Haar state is a trace.

\begin{thm}\label{quantum-fusion-link}
A compact quantum group $\GG=(A,\Delta)$ is a coamenable if and only if the corepresentation ring $R(\GG)$ is amenable.
\end{thm}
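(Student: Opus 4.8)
The plan is to connect the analytic condition of coamenability (via Theorem \ref{kesten-prop}) with the combinatorial condition of amenability of the fusion ring $R(\GG)$ (via Theorem \ref{izumi-thm}). The bridge between the two worlds is the character map $\chi\colon R(\GG)\to A_0$, which by Example \ref{corep-fusion} is an injective $*$-homomorphism, together with Remark \ref{bounded-rem} which identifies, under the GNS construction, $\pi_\tau(\xi)=d(\xi)l_\xi$ on $\ell^2(\Irred(\GG))$. The key observation I would exploit is that the GNS space $L^2(A_0,h)$ decomposes, by the quantum Peter--Weyl theorem, into a direct sum of matrix blocks indexed by $\Irred(\GG)$, and that the character $\chi(u^\alpha)$ acts on this space in a way that is intertwined with the operator $l_\alpha$ on $\ell^2(I)$ (after compressing to a suitable subspace, or by comparing spectra). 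More precisely, for a unitary corepresentation $v=\bigoplus_{\alpha}N_\alpha u^\alpha$ one has in $R(\GG)$ that $v=\sum_\alpha N_\alpha \alpha$, and $\Re(\chi(v))=\chi(\tfrac{v+\bar v}{2})$ corresponds under the GNS picture of $R(\GG)$ to the operator $\tfrac12(\pi_\tau(v)+\pi_\tau(\bar v))$; the point is to show that $n_v\in\sigma(\pi_h(\Re(\chi(v))))$ if and only if $n_v\in\sigma$ of the analogous fusion-algebra operator, and then recognize the latter, after normalization by $d$, as the statement $1\in\sigma(\lambda_{2,\mu})$ for the measure $\mu$ supported on the support of $v$.

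The step-by-step outline would be: (1) Recall from Theorem \ref{kesten-prop} that coamenability is equivalent to $n_u\in\sigma(\pi_h(\Re(\chi(u))))$ for every finite-dimensional unitary corepresentation $u$. (2) Given such a $u$, write $u$ up to equivalence as $\bigoplus_\alpha N_\alpha u^\alpha$ with finite support $S\subseteq I$, so that in $R(\GG)$ we have $u=\sum_{\alpha\in S}N_\alpha\alpha$ and $\Re(\chi(u))=\chi(w)$ where $w=\tfrac12\sum_{\alpha\in S}N_\alpha(\alpha+\bar\alpha)\in\CC[I]$ is a self-adjoint element. (3) Identify the operator $\pi_h(\chi(w))$ on $L^2(A_0,h)$ spectrally with the operator $\pi_\tau(w)$ on $L^2(\CC[I],\tau)=\ell^2(I)$; I expect this to follow because the $*$-representation of $R(\GG)$ on $L^2(A_0,h)$ generated by the characters decomposes as a (countable) direct sum/multiple of the GNS representation $\pi_\tau$, since both are governed by the same fusion coefficients $N_{\alpha,\beta}^\gamma$ — this is where the quantum Peter--Weyl orthogonality relations enter. (4) Using Remark \ref{bounded-rem}, translate $\pi_\tau(\alpha)=d(\alpha)l_\alpha$ and conjugate by the unitary $U$ of Proposition \ref{gns-and-lambda} so that $\pi_\tau(\alpha)$ becomes $d(\alpha)\lambda_{2,\alpha}$ on $\ell^2(I,\sigma)$; then $n_u = \sum_\alpha N_\alpha d(\alpha)$ and $\pi_\tau(w)$ corresponds to $\sum_\alpha N_\alpha d(\alpha)\,\tfrac12(\lambda_{2,\alpha}+\lambda_{2,\bar\alpha})$, so dividing by $n_u$ puts us in the situation of the probability measure $\mu$ with $\mu(\alpha)=\mu(\bar\alpha)=\tfrac{N_\alpha d(\alpha)}{2n_u}$ (symmetrized), giving $n_u\in\sigma(\pi_h(\Re(\chi(u))))\iff 1\in\sigma(\lambda_{2,\mu})$. (5) Conclude: coamenability says this holds for every $u$, hence for every finitely supported symmetric $\mu$ of the above form; a density/scaling argument shows it then holds for all finitely supported symmetric probability measures on $I$, which is exactly amenability of $R(\GG)$. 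Conversely, amenability of $R(\GG)$ gives $1\in\sigma(\lambda_{2,\mu})$ for every such $\mu$, in particular for those coming from corepresentations, yielding condition (ii) of Theorem \ref{kesten-prop}.

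The main obstacle I anticipate is step (3): carefully establishing that the spectrum of $\pi_h(\chi(w))$ acting on the Haar GNS space coincides with the spectrum of $\pi_\tau(w)$ acting on $\ell^2(I)$. One has to be careful because $L^2(A_0,h)$ is much bigger than $\ell^2(I)$ — each irreducible $u^\alpha$ contributes a full $n_\alpha^2$-dimensional matrix block — so the character subalgebra does \emph{not} act cyclically on all of $L^2(A_0,h)$. The resolution is presumably to observe that the von Neumann subalgebra of $M$ generated by the characters is isomorphic (with trace) to the von Neumann algebra generated by $\pi_\tau(R(\GG))$, equivalently that $h$ restricted to the $*$-algebra generated by $\{\chi(u^\alpha)\}$ agrees with the fusion-algebra trace $\tau$ after the identification $\chi(\alpha)\leftrightarrow \alpha$ — indeed $h(\chi(u^\alpha))=\delta_{\alpha,e}$ by invariance of the Haar state and irreducibility, which is exactly $\tau(\alpha)$. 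Since spectrum of a normal element in a $C^*$-algebra only depends on the $C^*$-algebra it generates (and here on the tracial von Neumann algebra it generates, via $1\in\sigma(T)\iff \tau$-a.e. spectral mass accumulates at $1$ in the appropriate sense), the spectra agree, and steps (4)–(5) are then routine bookkeeping with the dimension function and Hölder-type scaling of measures. One should also double-check the edge case $\mu(e)=0$ versus the requirement $e\in\supp(\mu)$ appearing in some conditions of Theorem \ref{izumi-thm}, but since any corepresentation $u$ may be replaced by $u\oplus e$ without affecting coamenability, this causes no trouble.
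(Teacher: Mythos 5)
Your proposal follows essentially the same route as the paper: the equivalence is reduced via Theorem \ref{kesten-prop} to the Kesten condition, and the bridge you identify as the main obstacle in step (3) is exactly the paper's Lemma \ref{char-extends}, which shows (using precisely your observation that $h\circ\chi=\tau$ together with faithfulness of $h$ and $\tau$) that the character map extends to an isometric $*$-homomorphism $C^*_\red(R(\GG))\to A_\red$, so that $\sigma(\lambda_{2,\mu})=\sigma\bigl(\tfrac{1}{n}\pi_h(\Re(\chi(u)))\bigr)$ after the normalization you describe. The only cosmetic difference is in the forward direction, where the paper simply applies the extended counit to $\sum_\xi \tfrac{t_\xi}{n_\xi}\chi(\xi)$ to exhibit $1$ in the spectrum for an \emph{arbitrary} finitely supported symmetric probability measure $\mu$, thereby avoiding the density-plus-spectral-continuity argument you need to pass from measures of corepresentation type to general ones.
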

For the proof we will need the following lemma. For this, recall from Section \ref{fusion-section} that the $*$-algebra $\CC[\Irred(\GG)]$ comes with a  trace $\tau$ given by
\[
\sum_{u\in \Irred(\GG)}z_u u\longmapsto z_{e},
\] 
where $e\in \Irred(\GG)$ denotes the identity in $R(\GG)$. In what follows, we denote by $C^*_\red(R(\GG))$ the enveloping $C^*$-algebra of $\CC[\Irred(\GG)]$ on the GNS space $L^2(\CC[\Irred(\GG)],\tau)$ arising from $\tau$. 
\begin{lem}\label{char-extends}
The character map $\chi\colon R(\GG)\to A_0$ extends to an iso\-me\-tric $*$-homomorphism $\chi\colon C^*_\red(R(\GG))\to A_\red$.
\end{lem}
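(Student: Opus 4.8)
The strategy is to identify the GNS construction for $(\CC[\Irred(\GG)],\tau)$ with the GNS construction for $(A_0,h)$ restricted to the subalgebra generated by the characters, and then transport the $C^*$-norm estimate through this identification. First I would compare the two traces: for $u^\alpha\in\Irred(\GG)$ one has $h(\chi(u^\alpha))=\delta_{\alpha,e}$ by the orthogonality relations for matrix coefficients (the Haar state kills matrix coefficients of nontrivial irreducibles), while by Proposition \ref{character} the map $\chi$ is a $*$-homomorphism of $*$-rings; hence $h\circ\chi=\tau$ on $\CC[\Irred(\GG)]$. Consequently $\chi$ is isometric for the $L^2$-norms, i.e.\ it extends to an isometry $L^2(\CC[\Irred(\GG)],\tau)\to L^2(A_0,h)=L^2(A_\red)$ which is $\CC[\Irred(\GG)]$-equivariant: it intertwines the GNS representation $\pi_\tau$ with $\pi_h\circ\chi$.

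\textbf{Key steps.} Next I would invoke Remark \ref{bounded-rem}, which tells us the GNS representation $\pi_\tau$ consists of bounded operators, so that $C^*_\red(R(\GG))$ makes sense as the norm closure of $\pi_\tau(\CC[\Irred(\GG)])$ in $\B(L^2(\CC[\Irred(\GG)],\tau))$. Because the isometry $V\colon L^2(\CC[\Irred(\GG)],\tau)\hookrightarrow L^2(A_\red)$ intertwines $\pi_\tau(r)$ with $\pi_h(\chi(r))V$ for every $r\in\CC[\Irred(\GG)]$, it follows that $\|\pi_h(\chi(r))\|_{\B(L^2(A_\red))}\geq \|\pi_\tau(r)\|$. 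The reverse inequality is the substantive point: I need $\|\pi_h(\chi(r))\|\leq \|\pi_\tau(r)\|$, i.e.\ the Hilbert-space norm of a character (word) acting on all of $L^2(A_\red)$ is already achieved on the cyclic subspace $\overline{\pi_h(\CC[\chi])\Omega_h}$ which is a copy of $L^2(\CC[\Irred(\GG)],\tau)$. This follows because the image of $\chi$ is globally invariant under the modular automorphism group of $h$ — characters are fixed by the modular group since $\chi(u^\alpha)$ is a sum of diagonal matrix coefficients and the modular group acts on matrix coefficients of $u^\alpha$ by a positive matrix, whose trace (the character) it fixes — so the $*$-subalgebra $B_0:=\chi(\CC[\Irred(\GG)])\subseteq A_0$ is invariant under $\sigma^h$; by Takesaki's theorem there is a $h$-preserving conditional expectation $E\colon A_\red\to \overline{B_0}^{w}$, and the standard GNS-representation of $(\overline{B_0},h|_{\overline{B_0}})$ is then a direct summand of $\pi_h|_{\overline{B_0}}$. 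Hence $\|\pi_h(b)\|_{\B(L^2(A_\red))}=\|\pi_h(b)\|_{\B(L^2(\overline{B_0},h))}$ for $b\in B_0$, and the latter space is exactly $L^2(\CC[\Irred(\GG)],\tau)$ under the isometry $V$; combining the two inequalities gives $\|\pi_h(\chi(r))\|=\|\pi_\tau(r)\|$, so $\chi$ extends to an isometric $*$-isomorphism $C^*_\red(R(\GG))\to\overline{B_0}^{\|\cdot\|}\subseteq A_\red$, which is the assertion.

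\textbf{Main obstacle.} The delicate step is the reverse norm inequality, i.e.\ showing the character subalgebra sits in $A_\red$ in a "corner-like" way so that its reduced $C^*$-norm is computed on its own cyclic subspace rather than on all of $L^2(A_\red)$. The clean way to do this is via modular-invariance of $\chi(\CC[\Irred(\GG)])$ and the resulting trace-preserving conditional expectation; one should double-check that $h$ restricted to the von Neumann algebra generated by the characters is a trace (it is, being $\tau$ on the strongly dense $*$-subalgebra, and $\tau$ is tracial on $\CC[\Irred(\GG)]$ by Frobenius reciprocity), which is what makes the GNS picture match $L^2(\CC[\Irred(\GG)],\tau)$ rather than some twisted $L^2$-space. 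With that in place the rest is bookkeeping: $\chi$ is already a $*$-homomorphism on the dense subalgebras by Proposition \ref{character}, it is $L^2$-isometric, and the norm identity upgrades it to the asserted isometric $*$-homomorphism on the $C^*$-completions. Note that coamenability is \emph{not} needed for this lemma — it is a structural statement used afterwards to relate $1\in\sigma(\lambda_{2,\mu})$-type conditions to the spectrum of $\Re(\chi(u))$ in $A_\red$ and thereby to Theorem \ref{kesten-prop}.
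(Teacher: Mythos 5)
Your setup is sound and matches the paper's: the identity $h\circ\chi=\tau$ (from the orthogonality relations), the induced isometry $V\colon L^2(\CC[\Irred(\GG)],\tau)\hookrightarrow L^2(A_\red)$ intertwining $\pi_\tau$ with $\pi_h\circ\chi$, and the inequality $\|\pi_h(\chi(r))\|\geq\|\pi_\tau(r)\|$ obtained by restricting to the invariant cyclic subspace. You also correctly isolate the reverse inequality as the substantive point. But your mechanism for it is broken: the character algebra is \emph{not} invariant under the modular automorphism group of $h$ unless $h$ is tracial. The modular group acts on an irreducible unitary $u$ by $\sigma_t(u)=(F^{it}\tens 1)\,u\,(F^{it}\tens 1)$ for a positive matrix $F$ --- both exponents carry the \emph{same} sign, which is exactly what encodes the failure of traciality --- so $\sigma_t(\chi(u))=\tr(F^{2it}u)\neq\tr(u)$ unless $F$ is scalar. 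What fixes characters is the \emph{scaling} group, $u\mapsto (F^{it}\tens 1)u(F^{-it}\tens 1)$; you have conflated the two. Concretely, for $SU_q(2)$ with generator $a$ one has $h(a^*a)=\tfrac{1}{1+q^2}$ and $h(aa^*)=\tfrac{q^2}{1+q^2}$, whence $h(\chi(u)a^*)=h(aa^*)\neq h(a^*a)=h(a^*\chi(u))$; so $\chi(u)$ does not lie in the centralizer of $h$, and by Takesaki's criterion no $h$-preserving conditional expectation onto the character algebra exists. Since the lemma must hold for arbitrary compact quantum groups --- it is applied in Theorem \ref{quantum-fusion-link} precisely to non-Kac examples such as $SU_q(2)$ --- this is a genuine gap, not a removable technicality. (Even if the expectation did exist, your inference from ``the GNS representation of $(\overline{B_0},h)$ is a direct summand of $\pi_h|_{\overline{B_0}}$'' to equality of norms would still secretly use faithfulness of $h$ on the subalgebra; a direct summand only yields the inequality you already had.)

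The repair is simpler than what you attempted and is what the paper does. Put $S=\chi(\CC[\Irred(\GG)])$ and let $\bar S$ be the norm closure of $\pi_h(S)$ in $A_\red$. The Haar state is faithful on $A_\red$, so the vector state $\langle\,\cdot\;\Omega,\Omega\rangle$ (with $\Omega$ the cyclic GNS vector) is faithful on $\bar S$. If $x\in\bar S$ vanishes on $K=\overline{\pi_h(S)\Omega}\cong L^2(\CC[\Irred(\GG)],\tau)$, then $x\Omega=0$, hence $h(x^*x)=0$ and $x=0$. Thus ``restriction to $K$'' is an injective $*$-homomorphism from the $C^*$-algebra $\bar S$ onto a dense subalgebra of $C^*_\red(R(\GG))$, hence isometric and onto; this is exactly the missing inequality $\|\pi_h(\chi(r))\|\leq\|\pi_\tau(r)\|$. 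Your closing remark that coamenability plays no role in Lemma \ref{char-extends} is correct.
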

\begin{proof}
Put $I=\Irred(\GG)$. For an irreducible, finite dimensional, unitary corepresentation $u$ we have $h(u_{ij})=0$ unless $u$ is the trivial corepresentation and therefore  the following diagram commutes
\[
\xymatrix{ 
\CC[I]\ar[d]_{\tau} \ar@{^{(}->}[rr]^{\chi} & & A_0\ar[dll]^{h}\\
\CC & &  }
\]
Hence $\chi$ extends to an isometric embedding 
\[
K=L^2(\CC[I],\tau)\hooklongrightarrow L^2(A_0,h)=H.
\]
Denote by $S$ the algebra $\chi(R(\GG))$ and by $\bar{S}$ the closure of $\pi_h(S)$ inside $A_\red$. Since $S$ is a $*$-algebra that maps $K$ into itself it also maps $K^\bot$ into itself and hence $\pi_h(\chi(a))$ takes the form
\begin{align*}
\left(%
\begin{array}{cc}
\pi_h(\chi(a)) \big{|}_K & 0 \\
0 & \pi_h(\chi(a))\big{|}_{K^\bot}
\end{array}%
\right).
\end{align*}
Thus 
\begin{align*}
\|\pi_h(\chi(a))\| &=\max\{\|\pi_h(\chi(a))\big{|}_{K}\|,\|\pi_h(\chi(a))\big{|}_{K^\bot}\|\}\\
&\geq \|\pi_h(\chi(a))\big{|}_{K}\|\\
&=\|\pi_\tau (a)\|.
\end{align*}
This proves that the map $\kappa\colon \pi_h(S)\to \pi_\tau(\CC[I])$ given by $\kappa(\pi_h(\chi(a)))=\pi_\tau(a)$ is bounded and it therefore extends to a contraction $\bar{\kappa}\colon \bar{S}\to C^*_\red(R(\GG))$. We now prove that $\bar{\kappa}$ is injective. Since $h$ is faithful on $A_\red$ and $\tau$ is faithful on $C^*_\red(R(\GG))$ we get the following commutative diagram
\[
\xymatrix{ 
\pi_h(S) \ar[rr]^{\kappa}_{\sim} \ar@{^{(}->}[d] & & \pi_\tau(\CC[I])\ar@{^{(}->}[d]\\
\bar{S} \ar[rr]^{\bar{\kappa}} \ar@{^{(}->}[d] && C^*_\red(R(\GG)) \ar@{^{(}->}[d]\\
L^2(\bar{S},h) \ar[rr] && L^2(C^*_\red(R(\GG)),\tau) 
}
\]
One easily checks that $\kappa$ induces an isometry $L^2(\bar{S},h)\to L^2(C^*_\red(\GG),\tau)$ and it therefore follows that $\bar{\kappa}$ is injective and hence an isometry. Thus, for $\chi(a)\in S$ we have
\[
\|\pi_h(\chi(a))\|=\|\bar{\kappa}(\pi_h(\chi(a)))\|=\|\pi_\tau(a)\|,
\]
as desired.

\end{proof}

\begin{proof}[Proof of Theorem \ref{quantum-fusion-link}]
Assume first that $\GG$ is coamenable and put $I=\Irred(\GG)$. Consider a finitely supported, symmetric probability measure $\mu$ on $I$.  We aim to show that $1\in \sigma(\lambda_{2,\mu})$, where $\lambda_{2,\mu}$ is the operator on $\ell^2(I,\sigma)$ defined in Section \ref{fusion-section}. Write $\mu$ as $\sum_{\xi\in I}t_\xi\delta_\xi$ and recall  (Lemma \ref{char-extends}) that the character map $\chi\colon \CC[I]\to A_0$ extends to an injective $*$-homomorphism $\chi\colon C^*_\red(R(\GG))\to A_\red$. Using this, and Proposition \ref{gns-and-lambda}, we get that
\begin{align*}
\sigma(\lambda_{2,\mu})&=\sigma(l_\mu)\\
&=\sigma(\sum_{\xi\in I} t_\xi l_\xi)\\
&= \sigma(\sum_{\xi\in I} t_\xi \frac{1}{n_\xi}\pi_\tau(\xi))\\
&=\sigma(\chi(\sum_{\xi\in I} \frac{t_\xi}{n_\xi}\pi_\tau(\xi)))\\
&=\sigma(\sum_{\xi\in I}\sum_{i=1}^{n_\xi} \frac{t_\xi}{n_\xi} \pi_h(\xi_{ii})).
\end{align*}
Since $\GG$ is coamenable, the counit extends to a character $\varps\colon A_\red\to \CC$ and we have
\[
\varps(\sum_{\xi\in I} \frac{t_\xi}{n_{\xi}}(\sum_{i=1}^{n_\xi}\xi_{ii}))=\sum_{\xi\in I} \frac{t_\xi}{n_\xi}n_\xi=1.
\]
Hence $1\in \sigma \Big{(}\sum_{\xi\in I} \frac{t_\xi}{n_{\xi}}(\sum_{i=1}^{n_\xi}\pi_h(\xi_{ii}))\Big{)}=\sigma(\lambda_{2,\mu})$ and we conclude that $R(\GG)$ is amenable.

	Assume, conversely, that $R(\GG)$ is amenable. We aim at proving that $\GG$ fulfills the Kesten condition from Theorem \ref{kesten-prop}. Let therefore $u\in \MM_n(A)$ be an arbitrary, finite dimensional, unitary corepresentation. Denote by $(u_{\alpha})_{\alpha\in S}\subseteq \Irred(\GG)$ the irreducible corepresentations occurring in the decomposition of $u$ and by $k_{\alpha}$ the multiplicity of $u_\alpha$ in $u$. Now define
\[
\mu_u(u_\alpha)=
\left\{%
\begin{array}{ll}
   \frac{k_\alpha n_\alpha}{n} & \hbox{if $\alpha\in S$;} \\
    0 & \hbox{if $\alpha\notin S$.} \\
\end{array}%
\right.
\]
Putting $\mu=\frac12 \mu_{u}+\frac12 \mu_{\bar{u}}$ we obtain a finitely supported, symmetric probability measure and by assumption we have that $1\in \sigma(\lambda_{2,\mu})$.  Using again that the character map extends to an injective $*$-homomorphism $\chi\colon C^*_\red(R(\GG))\to A_\red$ we obtain
\begin{align*}
\sigma(\lambda_{2,\mu}) &= \sigma\Big{(}\sum_{\alpha\in S}\frac{k_\alpha n_\alpha}{2n}\lambda_{2, u_\alpha} +\sum_{\alpha\in S}\frac{k_\alpha n_\alpha}{2n}\lambda_{2, u_{\bar{\alpha}}} \Big{)}\\
&=\sigma \Big{(}\sum_{\alpha\in S}\frac{k_\alpha n_\alpha}{2n} l_{u_\alpha} +\sum_{\alpha\in S}\frac{k_\alpha n_\alpha}{2n}l_{u_{\bar{\alpha}}}\Big{)}\tag{Prop. \ref{gns-and-lambda}}\\
&=\sigma\Big{(}\sum_{\alpha\in S}\frac{k_\alpha n_\alpha}{2n}\frac{1}{n_\alpha}\pi_\tau(u_\alpha)+\sum_{\alpha\in S}\frac{k_\alpha n_\alpha}{2n}\frac{1}{n_\alpha}\pi_\tau(u_{\bar{\alpha}})   \Big{)}\tag{Rem. \ref{bounded-rem}}\\
&=\sigma\Big{(}\sum_{\alpha\in S}\frac{k_\alpha}{2n}\pi_h(\chi(u_\alpha))+\sum_{\alpha\in S}\frac{k_\alpha}{2n}\pi_h(\chi(u_{\bar{\alpha}}))\Big{)}   \\
&=\sigma\Big{(} \frac{1}{2n}\pi_h(\chi(u)) +\frac{1}{2n}\pi_h(\chi(\bar{u})) \Big{)}\\
&=\sigma\Big{(} \frac{1}{n}\pi_h(\Re(\chi(u))) \Big{)}.\\
\end{align*}
Thus
\[
1\in \sigma(\lambda_{2,\mu}) \quad\text{ if and only if }\quad n\in\sigma(\Re(\pi_h(\chi(u)))),
\]
and the result now follows from Theorem \ref{kesten-prop}.
\end{proof}
In particular we (re-)obtain the following.
\begin{cor}\label{group-fusion}
A  discrete group is amenable if and only if the group ring, considered as a fusion algebra, is amenable.
\end{cor}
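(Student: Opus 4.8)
The plan is to obtain this as an immediate consequence of Theorem~\ref{quantum-fusion-link} applied to the compact quantum group $\GG=(C^*_\red(\Gamma),\Delta_\red)$ attached to a given discrete group $\Gamma$.

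First I would identify the corepresentation ring $R(\GG)$ with the group ring $\ZZ[\Gamma]$, equipped with the fusion algebra structure described in the Example following Definition~\ref{fusion-defi} (inversion as involution, trivial dimension function). By the Example on $C^*_\red(\Gamma)$ in Section~\ref{prelim}, a complete family of irreducible unitary corepresentations of $\GG$ is $\{\lambda_\gamma\mid\gamma\in\Gamma\}$, each one--dimensional, so the dimension function on $R(\GG)$ is identically $1$. Since $\Delta_\red(\lambda_\gamma)=\lambda_\gamma\tens\lambda_\gamma$, the tensor product $\lambda_\gamma\tenrep\lambda_{\gamma'}=(\lambda_\gamma)_{(13)}(\lambda_{\gamma'})_{(23)}$ reduces, on the one--dimensional level, to $\lambda_{\gamma\gamma'}$, so the fusion product on $R(\GG)$ is exactly the multiplication of $\ZZ[\Gamma]$; and $\overline{\lambda_\gamma}=\lambda_\gamma^{*}=\lambda_{\gamma^{-1}}$, so the conjugation on $R(\GG)$ is the $\ZZ$--linear extension of $\gamma\mapsto\gamma^{-1}$. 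Hence $R(\GG)=\ZZ[\Gamma]$ as a fusion algebra.

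Next I would invoke the classical fact, recorded in the Remark following Definition~\ref{coamenable-defi}, that $(C^*_\red(\Gamma),\Delta_\red)$ is coamenable precisely when $\Gamma$ is amenable. Combining this with Theorem~\ref{quantum-fusion-link} --- which asserts that $\GG$ is coamenable if and only if $R(\GG)$ is amenable --- yields that $\Gamma$ is amenable if and only if $\ZZ[\Gamma]$, viewed as a fusion algebra, is amenable; this also re-confirms the assertion of Remark~\ref{foelner-rem}. No genuine obstacle arises; the only mildly delicate point is the bookkeeping in the identification of $R(\GG)$ with $\ZZ[\Gamma]$, in particular checking via the leg--numbering convention that the tensor product of corepresentations corresponds to group multiplication (which in any case is immaterial, $\Gamma$ being amenable exactly when $\Gamma^{\op}$ is).
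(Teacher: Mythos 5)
Your proof is correct and is essentially the paper's intended argument: the corollary is stated immediately after Theorem~\ref{quantum-fusion-link} precisely as the specialization to $\GG=(C^*_\red(\Gamma),\Delta_\red)$, using the identification $R(\GG)\cong\ZZ[\Gamma]$ and the classical equivalence between coamenability of $(C^*_\red(\Gamma),\Delta_\red)$ and amenability of $\Gamma$. (The paper also sketches an independent route in Remark~\ref{foelner-rem} via the Følner condition $(\FC3)$, which is why it says ``(re-)obtain''.)
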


\begin{cor}[\cite{banica-subfactor}]
The quantum groups $SU_q(2)$ are coamenable.
\end{cor}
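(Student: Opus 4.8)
The plan is to route through Theorem \ref{quantum-fusion-link}, which reduces coamenability of $SU_q(2)$ to amenability of its corepresentation ring. The only external ingredient is Woronowicz's description of the corepresentation theory of $SU_q(2)$: the irreducible corepresentations form a sequence $(w^{(n)})_{n\in\NN_0}$ with $\dim w^{(n)}=n+1$, each $w^{(n)}$ self-conjugate, and the fusion rules are the classical Clebsch--Gordan rules
\[
w^{(m)}\tenrep w^{(n)}\cong w^{(|m-n|)}\oplus w^{(|m-n|+2)}\oplus\cdots\oplus w^{(m+n)}.
\]
Consequently, for every admissible value of $q$ the fusion algebra $R(SU_q(2))$ is isomorphic \emph{as a fusion algebra} --- i.e.~with the same basis, involution, dimension function and structure constants --- to the corepresentation ring $R(C(SU(2)))$ of the classical compact group $SU(2)$. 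Since the operators $\lambda_{2,\mu}$, and hence the property of being amenable, depend only on this data, it suffices to prove that $R(C(SU(2)))$ is amenable.

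This is immediate: the commutative compact quantum group $C(SU(2))$ is coamenable --- its counit is evaluation at the unit of $SU(2)$ and is therefore globally defined and bounded --- so Theorem \ref{quantum-fusion-link} yields amenability of $R(C(SU(2)))=R(SU_q(2))$. Feeding this back into Theorem \ref{quantum-fusion-link}, now read in the opposite direction, we conclude that $SU_q(2)$ is coamenable.

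If an argument internal to the fusion algebra is preferred, amenability of $R(SU_q(2))$ can instead be checked against the F{\o}lner condition $(\FC3)$ of Theorem \ref{izumi-thm}. Since every finite subset of $\Irred(SU_q(2))$ is contained in some $S=\{w^{(0)},\dots,w^{(k)}\}$ and enlarging $S$ only enlarges boundaries, it suffices to treat such $S$; for $F=\{w^{(0)},\dots,w^{(N)}\}$ with $N\geq k$, unwinding the Clebsch--Gordan rules gives $\del_S(F)=\{w^{(j)}\mid N-k<j\leq N+k\}$, so that
\[
\frac{\sum_{\xi\in\del_S(F)}d(\xi)^2}{\sum_{\xi\in F}d(\xi)^2}=\frac{\sum_{j=N-k+1}^{N+k}(j+1)^2}{\sum_{j=0}^{N}(j+1)^2}=O(k/N),
\]
the denominator growing like $N^3/3$ and the numerator like $2kN^2$; letting $N\to\infty$ establishes $(\FC3)$. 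Either way there is no genuine obstacle: once Woronowicz's classification of the irreducible corepresentations of $SU_q(2)$ is granted, the statement follows directly from the machinery developed above, with all the substance residing in Theorems \ref{izumi-thm} and \ref{quantum-fusion-link}.
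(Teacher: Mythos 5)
Your main argument is exactly the paper's proof: by Theorem \ref{quantum-fusion-link} coamenability of $SU_q(2)$ is equivalent to amenability of $R(SU_q(2))$, which equals $R(SU(2))$ by Woronowicz's Clebsch--Gordan fusion rules, and the latter is amenable because $(C(SU(2)),\Delta_c)$ is coamenable. The supplementary direct verification of $(\FC3)$ with $F=\{w^{(0)},\dots,w^{(N)}\}$ is also correct (and a nice self-contained check), but it is not needed once the identification $R(SU_q(2))=R(SU(2))$ is in hand.
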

\begin{proof}
By Theorem \ref{quantum-fusion-link}, $SU_q(2)$ is coamenable if and only if $R(SU_q(2))$ is amenable. But, $R(SU_q(2))=R(SU(2))$ (see e.g.~\cite{woronowicz-tannaka}) and since $(C(SU(2)),\Delta_c)$ is a coamenable quantum group $R(SU(2))$ is amenable. 
\end{proof}
As seen from Theorem \ref{quantum-fusion-link}, the answer to the question of whether a compact quantum group is coamenable or not can be determined using only information about its corepresentations  --- a fact noted by Banica in the setting of compact matrix quantum groups in \cite{banica-fusion} and \cite{banica-subfactor}. With this in mind, we now propose the following F{\o}lner condition for quantum groups.
\begin{defi}\label{quantum-foelner}
A compact quantum group $\GG=(A,\Delta)$ is said to sa\-tis\-fy F{\o}lner's condition if for any finite, non-empty subset $S\subseteq \Irred(\GG)$ and any $\varps>0$ there exists a finite subset $F\subseteq \Irred(\GG)$ such that
\[
\sum_{u\in \del_S(F)}n_u^2< \varps \sum_{u\in F}n_u^2.
\]
Here $n_u$ denotes the dimension of the irreducible corepresentation $u$ and  $\del_S(F)$ is the boundary of $F$ relative to $S$ as in Definition \ref{rand-defi}.
\end{defi}
We immediately obtain the following.

\begin{cor}
A compact quantum group is coamenable if and only if it satisfies F{\o}lner's condition.
\end{cor}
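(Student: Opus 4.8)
The plan is to observe that this corollary is nothing but the combination of Theorem \ref{quantum-fusion-link} with the equivalence $(\A)\Leftrightarrow(\FC 3)$ from Theorem \ref{izumi-thm}, once one recognises that the F{\o}lner condition of Definition \ref{quantum-foelner} is precisely condition $(\FC 3)$ applied to the fusion algebra $R(\GG)$.

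First I would recall, from Example \ref{corep-fusion}, that the corepresentation ring $R(\GG)=\ZZ[\Irred(\GG)]$ is a fusion algebra whose distinguished basis is $\Irred(\GG)$ and whose dimension function $d$ is given by $d(u)=n_u$. The relative boundary $\del_S(F)$ occurring in Definition \ref{quantum-foelner} is, by construction, the same object as the boundary $\del_S(F)$ of Definition \ref{rand-defi} applied to $R(\GG)$, since both are expressed purely in terms of the fusion coefficients $N_{\alpha,\beta}^\gamma$. Substituting $d(u)=n_u$ into condition $(\FC 3)$ of Theorem \ref{izumi-thm} therefore turns that condition verbatim into the statement: for every finite, non-empty $S\subseteq\Irred(\GG)$ and every $\varps>0$ there is a finite $F\subseteq\Irred(\GG)$ with $\sum_{u\in\del_S(F)}n_u^2<\varps\sum_{u\in F}n_u^2$, which is exactly Definition \ref{quantum-foelner}.

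Next I would invoke Theorem \ref{izumi-thm}, which gives $(\A)\Leftrightarrow(\FC 3)$ for every fusion algebra; applied to $R(\GG)$ this reads: $R(\GG)$ is amenable if and only if $\GG$ satisfies F{\o}lner's condition. Finally, Theorem \ref{quantum-fusion-link} states that $R(\GG)$ is amenable if and only if $\GG$ is coamenable. Chaining the two equivalences yields the corollary. There is essentially no obstacle here: the only point requiring care is the bookkeeping identification of the defining data of the fusion algebra $R(\GG)$ (basis, dimension function, relative boundary) with the data appearing in Definition \ref{quantum-foelner}, and this is immediate from Example \ref{corep-fusion} and Definition \ref{rand-defi}. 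So the proof is a short chain of already-established equivalences.
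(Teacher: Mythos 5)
Your proposal is correct and is precisely the paper's own argument: identify Definition \ref{quantum-foelner} with condition $(\FC 3)$ of Theorem \ref{izumi-thm} applied to $R(\GG)$, then chain the equivalences of Theorem \ref{izumi-thm} and Theorem \ref{quantum-fusion-link}. Nothing further is needed.
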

\begin{proof}
By Theorem \ref{quantum-fusion-link}, the compact quantum group $\GG$ is coamenable if and only if $R(\GG)$ is amenable. By Theorem \ref{izumi-thm}, $R(\GG)$ is amenable if and only if it satisfies $(\FC 3)$ which is exactly the same as saying that $\GG$ satisfies F{\o}lner's condition.
\end{proof}

In Section \ref{vanishing-section} we will use this F{\o}lner condition to deduce a vanishing result concerning $L^2$-Betti numbers of compact, coamenable quantum groups.

\section{An Interlude}\label{interlude}
In this section we gather various notation and minor results which will be used in the following section to prove our main result, Theorem \ref{dim-flad}. Some generalities on von Neumann algebraic quantum groups are stated without proofs; we refer to \cite{kustermans-vaes} for the details.

	Consider again a compact quantum group $\GG=(A,\Delta)$ with tracial Haar state $h$. Denote by $\{u^\alpha\mid \alpha\in I\}$ a complete set of representatives for the equivalence classes of irreducible, unitary corepresentations of $\GG$. Consider the dense Hopf $*$-algebra 
\[
A_0=\spann_\CC\{u_{ij}^\alpha\mid \alpha\in I\},
\]
and its discrete dual  Hopf $*$-algebra $\hat{A_0}$. Since $h$ is tracial, the discrete quantum group $\hat{A_0}$ is unimodular; i.e.~the left and right invariant functionals are the same. Denote by $\hat{\varphi}$ the left and right invariant functional on $\hat{A_0}$ normalized such $\hat{\varphi}(h)=1$. For $a\in A_0$ we denote by $\hat{a}\in A_0'$ the map
\[
A_0\ni x {\longmapsto} h(ax)\in \CC.
\] 
Then, by definition, we have  $\hat{A_0}=\{\hat{a}\mid a\in A_0\}$. The algebra $\hat{A_0}$ is $*$-isomorphic to 
\[
\displaystyle\overset{\alg}{\bigoplus_{\alpha\in I}}\ \MM_{n_\alpha}(\CC),
\]
and because $h$ is tracial the isomorphism has a simple description; if we denote by $E_{ij}^\alpha$ the standard matrix units in $\MM_{n_\alpha}(\CC)$ then the map
\[
\Phi (\widehat{(u_{ij}^{\alpha})^*})=\tfrac{1}{n_\alpha }E_{ij}^\alpha,
\]
extends to a $*$-isomorphism \cite{vandaele}.  Denote by $\lambda$ the GNS representation of $A$ on $H=L^2(A_0,h)$, by $\eta$ the canonical inclusion $A_0\subseteq H$ and by $M$ (or $\lambda(M)$) the enveloping von Neumann algebra $\lambda(A_0)''$.  The map $\hat{\eta}\colon\hat{A_0}\to H$ given by $\hat{a}\mapsto \eta(a)$ makes $(H,\hat{\eta})$ a GNS pair for $(\hat{A_0},\hat{\varphi})$ 
and the corresponding GNS representation $L$ is given by
\[
L(\hat{a})\eta(x)=\hat{\eta}(\hat{a}\hat{x}).
\]
We denote by $\hat{M}$ (or $L(\hat{M}))$ the enveloping von Neumann algebra $L(\hat{A_0})''$. This is a discrete von Neumann algebraic quantum group and $\hat{\varphi}$ gives rise to a left and right invariant, normal, semifinite, faithful weight on $\hat{M}$. Each finite subset $E\subseteq I$ gives rise to a central projection 
\[
P_E=\Phi^{-1}\Big{(}\sum_{\alpha\in I}\chi_{E}(\alpha)1_{n_\alpha}\Big{)}\in \hat{A}_0, 
\]
where $1_{n_\alpha}$ denotes the unit in $\MM_{n_{\alpha}}(\CC)$ and $\chi_E$ is the characteristic function for the set $E$. A direct computation shows that $L(P_{E})$ is the orthogonal projection onto the finite dimensional subspace
\[
\spann_\CC\{u_{ij}^{\bar{\alpha}}\mid 1\leq i,j\leq n_\alpha, {\alpha}\in {E}\}.
\]
Recall from Example \ref{corep-fusion} that $u^{\bar{\beta}}$ is the element in $\{u^\alpha\mid \alpha\in I\}$ which is equivalent to $(u^{\beta})^c$. Because $h$ is tracial, the left invariant weight $\hat{\varphi}$ on $\hat{A_0}$ has the particular simple form \cite[p.47]{vaes-et-al-notes}
\[
\hat{\varphi}=\Big{(}\sum_{\alpha\in I}n_{\alpha}\tr_{n_{\alpha}}\Big{)}\circ\Phi,
\]
where $\tr_{n_\alpha}$ is the non-normalized trace on $\MM_{n_{\alpha}}(\CC)$. In particular
\[
\hat{\varphi}(P_E)=\sum_{\alpha\in E}n_{\alpha}^2=\hat{\varphi}(P_{\bar{E}}),
\]
for any finite subset $E\subseteq I$.  For any $m\in M$ and any finite subset $E\subseteq I$ we have \cite[2.10]{vaes-van-daele-heisenberg} that
\begin{align*}
\Tr_{H}(m^*P_E m)=h(m^*m)\hat{\varphi}(P_E),
\end{align*}
where $\Tr_H$ denotes the standard trace on $B(H)$. Here, and in what follows, we suppress the representations $\lambda$ and $L$ of $M$ and $\hat{M}$ respectively on $H$. The commutant $M'$ is the underlying von Neumann algebra of a compact, von Neumann algebraic quantum group whose Haar state is also given by the vector state $h$ and whose discrete dual is given by $(\hat{M},\hat{\Delta})^\op$; this quantum group has $\hat{M}$ as its underlying von Neumann algebra, but is endowed with comultiplication $\hat{\Delta}^\op=\sigma \hat{\Delta}$ where $\sigma$ denotes the flip-automorphism on $\hat{M}\htens \hat{M}$. Since $(\hat{M},\hat{\Delta})$ is unimodular we see that $\hat{\varphi}^\op=\hat{\varphi}$ and hence the trace-formula above extends in the following way.
\begin{lem}[\cite{vaes-van-daele-heisenberg}]\label{trace-formula}
For any $m\in M$ or $m\in M'$ and any finite subset $E\subseteq I$ we have $\Tr_H(m^*P_Em)= h(m^*m)\hat{\varphi}(P_E)$.
\end{lem}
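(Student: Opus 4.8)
\noindent\emph{Proposed proof.}
The plan is to reduce this to the case $m\in M$, which is exactly the formula quoted above from \cite[2.10]{vaes-van-daele-heisenberg}, by running that same formula for the compact quantum group attached to the \emph{commutant} $M'$. Recall from the preceding discussion that $M'$ is the underlying von Neumann algebra of a compact, von Neumann algebraic quantum group $(M',\Delta')$ acting on the very same Hilbert space $H$; that its Haar state is the vector state $m\mapsto\langle\,\eta(1),\,m\,\eta(1)\,\rangle$, so that on $M'$ one again has $h(m^*m)=\|m\,\eta(1)\|^2$; and that its discrete dual is $(\hat{M},\hat{\Delta})^{\op}$, which has the same underlying von Neumann algebra $\hat{M}$ acting on the same $H$ and, by unimodularity of $(\hat{M},\hat{\Delta})$, carries the left invariant weight $\hat{\varphi}^{\op}=\hat{\varphi}$.

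Granting this, I would simply apply \cite[2.10]{vaes-van-daele-heisenberg} verbatim to $(M',\Delta')$ in place of $(M,\Delta)$. The one thing to verify is that every object entering the formula is literally the same as before: the standard trace $\Tr_H$ on $B(H)$ is untouched; the Haar state is again $h$; and the central projection attached to a finite set $E$ of irreducible corepresentations of $(M',\Delta')$ is again of the form $\Phi^{-1}\bigl(\sum_\alpha\chi_E(\alpha)1_{n_\alpha}\bigr)$ via the matrix block decomposition $\Phi$ of $\hat{A_0}$, hence one of the projections $P_E$ already in play. Even if the natural bijection between $\Irred(M')$ and $\Irred(M)$ happens to be implemented by the conjugation $\alpha\leftrightarrow\bar\alpha$, the identity $\hat{\varphi}(P_E)=\sum_{\alpha\in E}n_\alpha^2=\hat{\varphi}(P_{\bar E})$ noted above means the right-hand side is unaffected. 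This gives $\Tr_H(m^*P_Em)=h(m^*m)\,\hat{\varphi}^{\op}(P_E)=h(m^*m)\,\hat{\varphi}(P_E)$ for every $m\in M'$, which together with the known statement for $m\in M$ is precisely the lemma.

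The only genuinely non-formal input, and the step I expect to require care, is the standard-form bookkeeping from \cite{kustermans-vaes} and \cite{vaes-van-daele-heisenberg}: one must be certain that the commutant really is realised as a compact quantum group on the \emph{same} $H$, with Haar state $h$ and dual $(\hat{M},\hat{\Delta})^{\op}$, and that under these identifications the family $\{P_E\}$ and the weight $\hat{\varphi}$ are the ones already fixed, not merely unitarily conjugate copies. Should the identification only be available up to a unitary $w$ on $H$ with $w\,\eta(1)=\eta(1)$, one would add the remark that conjugation by such a $w$ leaves both $\Tr_H$ and the functional $m\mapsto h(m^*m)$ invariant, so that the conclusion persists regardless.
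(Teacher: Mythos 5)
Your proposal is correct and is essentially the argument the paper itself uses: the case $m\in M$ is the cited formula \cite[2.10]{vaes-van-daele-heisenberg}, and the case $m\in M'$ follows by applying that same formula to the commutant compact quantum group, whose Haar state is the same vector state $h$ and whose discrete dual is $(\hat{M},\hat{\Delta})^{\op}$ with $\hat{\varphi}^{\op}=\hat{\varphi}$ by unimodularity. The bookkeeping points you flag (same $H$, same $\hat{\varphi}$, and insensitivity to the possible relabelling $E\leftrightarrow\bar{E}$ via $\hat{\varphi}(P_E)=\hat{\varphi}(P_{\bar{E}})$) are exactly the identifications the paper records in the paragraph preceding the lemma.
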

With this lemma we conclude the interlude and move towards an application of the quantum F{\o}lner condition.

\section{A Vanishing Result}\label{vanishing-section}
In this section we investigate the $L^2$-Betti numbers of coamenable quantum groups. The notion of $L^2$-Betti numbers for compact quantum groups was introduced in \cite{quantum-betti} and we refer to that paper (and the introduction) for the definitions and basic results.  Throughout the section, we will freely use L{\"u}ck's extended Murray-von Neumann dimension, but whenever explicit properties are used there will be a reference. These references will be to the original work \cite{luck97} and \cite{luck98}, but for the reader who wants to learn the subject Lück's book \cite{luck02} is probably a better general reference. 

	Consider again a compact quantum group $\GG=(A,\Delta)$ with Haar state $h$ and denote by $M$ the enveloping von Neumann algebra in the GNS representation arising from $h$. As promised in the introduction, we will now prove the following theorem which should be considered as a quantum group analogue of Theorem 5.1 from \cite{luck98}. 

\begin{thm}\label{dim-flad}
If $\GG$ is coamenable and $h$ is tracial then for any left $A_0$-module $Z$ and any $k\geq 1$ we have
\[
\dim_M \Tor_k^{A_0}(M,Z)=0,
\]
where $\dim_M(-)$ is L{\"u}ck's extended dimension function arising from the extension of the trace-state $h$. 
\end{thm}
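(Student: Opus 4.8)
The plan is to mimic Lück's argument from \cite{luck98} in the quantum setting, using the quantum Følner condition (equivalently, coamenability, via Theorem \ref{quantum-fusion-link} and the corollary following Definition \ref{quantum-foelner}) as the replacement for the classical Følner sequence. The key structural input is that $A_0$ admits a natural filtration by finite-dimensional pieces: for a finite subset $E\subseteq I$ set $A_E=\spann_\CC\{u_{ij}^\alpha\mid\alpha\in E\}$, so that $A_0=\bigcup_E A_E$ as $E$ ranges over an increasing exhaustion of $I$. Since $\dim_M(-)$ is additive, continuous and commutes with inductive limits (the properties established in \cite{luck97,luck98}), and since $\Tor$ commutes with the relevant direct limits, it suffices to bound $\dim_M\Tor_k^{A_0}(M,Z)$ for $Z$ running over finitely generated modules, and in fact — by the standard dimension-theoretic reductions (a finitely generated module is a quotient of a free one, and one induces on $k$ using a free resolution) — it is enough to prove the case $k=1$, $Z=A_0/\mathfrak{a}$ for a finitely generated left ideal $\mathfrak{a}$, or equivalently to show that for any finitely generated submodule $Z_0\subseteq A_0^n$ one has $\dim_M\big(\ker(M\otimes_{A_0}Z_0\to M^n)\big)=0$.

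The heart of the matter is a Følner-type estimate for operators in $M$ acting on $H=L^2(A_0,h)$. Concretely, I would fix a finite set $S\subseteq I$ large enough that a given finite list of generators $a_1,\dots,a_m\in A_0$ of the relevant module all lie in $A_S$, apply the quantum Følner condition to obtain a finite $F\subseteq I$ with $\sum_{u\in\partial_S(F)}n_u^2<\varps\sum_{u\in F}n_u^2$, and then use the central projections $P_F\in\hat M$ from the interlude together with the trace formula of Lemma \ref{trace-formula}: $\Tr_H(m^*P_Em)=h(m^*m)\,\hat\varphi(P_E)$. The point is that multiplication by $a\in A_S$ maps the finite-dimensional range of $L(P_F)$ into the range of $L(P_{F\cup\partial_S(F)})$, so that $[L(P_F),\lambda(a)]$ is "supported on the boundary" and hence small in the normalized trace $\frac{1}{\hat\varphi(P_F)}\Tr_H$. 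This is precisely the quantum analogue of the fact that, for an amenable group, left translation by a fixed group element almost commutes with the projection onto $\ell^2(F_n)$ for a Følner sequence $F_n$. Passing to $M\bar\otimes M'$ acting on $H$ and using that $\hat\varphi(P_{\bar E})=\hat\varphi(P_E)$, one gets a sequence of finite-rank projections $Q_n=L(P_{F_n})$ with $\Tr_H(Q_n)<\infty$, $\frac{1}{\Tr_H(Q_n)}\Tr_H(Q_n m) \to h(m)$ for $m\in M$, and $\|[Q_n,\lambda(a)]\|_{2}/\|Q_n\|_2\to 0$ for every $a\in A_0$.

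Granting this, the vanishing of $\dim_M\Tor_k^{A_0}(M,Z)$ follows by Lück's rank argument. Given a presentation $A_0^p\xrightarrow{T}A_0^q$ with $Z$ a subquotient, one estimates $\dim_M$ of the homology of $M^p\xrightarrow{1\otimes T}M^q$ by approximating $1\otimes T$ by its compressions $Q_n^{\oplus}(1\otimes T)Q_n^{\oplus}$, which are genuine finite matrices over the finite von Neumann algebra $Q_nMQ_n$ whose normalized-trace ranks converge; the almost-commutation of $Q_n$ with the entries of $T$ (entries lie in $A_0$) forces the rank defect to vanish in the limit, exactly as in \cite[Section 5]{luck98}. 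I expect the main obstacle to be the bookkeeping around the normalization: one must check that $\frac{1}{\hat\varphi(P_{F_n})}\Tr_H$ restricted to $M$ really does converge to the Haar trace $h$ (this uses that $h(u_{ij}^\alpha)=0$ for nontrivial $\alpha$, so only the contribution of the trivial corepresentation survives, weighted correctly), and that the dimension function $\dim_M$ computed from $h$ is the one that appears when one passes through these finite compressions — i.e. that the local traces $\frac{1}{\Tr_H(Q_n)}\Tr_H(Q_n\,\cdot\,Q_n)$ are compatible with $h$ in the limit. Once the approximating projections and the trace-convergence are in place, the homological algebra is formal and identical to the group case.
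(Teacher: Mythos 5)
Your proposal follows essentially the same route as the paper's proof: the same reduction to $k=1$ and finitely presented modules, then the quantum F{\o}lner condition combined with the central projections $P_F\in\hat{M}$ and the trace formula of Lemma \ref{trace-formula} to bound the trace of the projection onto $\ker(f^{(2)})\ominus\overline{\ker f}$ by $\hat{\varphi}(P_{\del_S(F)})/\hat{\varphi}(P_F)<\varps$. The normalization issue you single out as the main obstacle is already settled by Lemma \ref{trace-formula} --- the identity $\Tr_H(qP_Eq)=h(q)\hat{\varphi}(P_E)$ is exact for $q$ in $M$ or $M'$, not merely asymptotic --- and the commutator $[P_{\bar{F}},R^{(2)}_a]$ vanishes identically on $\ker(P_{\overline{\del_S(F)}})$ for $a$ supported in $S$, so the rank of the defect operator is bounded outright by $\hat{\varphi}(P_{\del_S(F)})$.
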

If $M$ were flat as a module over $A_0$ we would have $\Tor_k^{A_0}(M,Z)=0$ for any $Z$ and any $k\geq 1$, and the property in Theorem \ref{dim-flad} is therefore referred to as \emph{dimension flatness} of the von Neumann algebra over the algebra of matrix coefficients. The proof of Theorem \ref{dim-flad}, which is a generalization of the corresponding proof of \cite[5.1]{luck98}, is divided into three parts. Part I consists of reductions while part II contains the central argument carried out in detail in a special case. Part III shows how to boost the argument from part II to the general case. Throughout the proof, we will use freely the quantum group notation developed in the previous sections without further reference; in particular, $\{u^\alpha\mid \alpha \in I\}$ will denote a fixed, complete set of pairwise inequivalent, irreducible, unitary corepresentations of $\GG$.

\begin{proof}[Proof of Theorem \ref{dim-flad}.]
\hspace{1cm}\\
\vspace{-0.5cm}
\begin{center}
\textsc{Part I}
\end{center}
We begin with some reductions. Let an arbitrary $A_0$-module $Z$ be given and choose a free module $F$ that surjects onto $Z$. Then we have a short exact sequence
\[
0\To K\To F\To Z\To 0,
\]
and since $F$ is free (in particular flat) the corresponding long exact $\Tor$-sequence gives an isomorphism
\[
\Tor_{k+1}^{A_0}(M,Z)\simeq \Tor_{k}^{A_0}(M,K) \ \textrm{ for } k\geq 1.
\]
It is therefore sufficient to prove the theorem for arbitrary $Z$ and $k=1$.  Moreover, we may assume that $Z$ is finitely generated since $\Tor$ commutes with direct limits, every module is the directed union of its finitely generated submodules and $\dim_M(-)$ is well behaved with respect to direct limits \cite[2.9]{luck98}. Actually, we can assume that $Z$ is finitely presented since any finitely generated module $Z$ is a direct limit of finitely presented modules. To see this, choose a short exact sequence
\[
0\To K\To F \To Z \To 0,
\]
with $F$ finitely generated and free. Denote by $(K_j)_{j\in J}$ the directed system of finitely generated submodules in $K$. Then $F/K_j$ is finitely presented for each $j\in J$ and 
\[
Z= \varinjlim_j F/K_j.
\]
Because of this and the direct limit formula for the dimension function \cite[2.9]{luck98} we may, and will, therefore assume that $Z$ is finitely presented. Choose a finite presentation 
\[
A_0^{n}\overset{f}{\To } A_0^m \To Z\To 0.
\]
Put $H=L^2(A,h)$,  $K=\ker(f)\subseteq A_0^n\subseteq H^n$ and denote by $f^{(2)}\colon H^n\to H^m$ the continuous  extension of $f$. Then we have
\[
\Tor_1^{A_0}(M,Z)=\frac{\ker(\id_M\tens f)}{M\utens{A_0} K},
\]
and hence
\begin{align*}
\dim_M\Tor_1^{A_0}(M,Z)&=\dim_M \ker(\id_M\tens f)-\dim_M M\utens{A_0} K\\
&=\dim_M \ker(f^{(2)})-\dim_M \overline{K}^{\|\cdot\|_2},
\end{align*}
where the second equality follows from \cite[2.11]{CS}. See also \cite[p.158-159]{luck98}. So we need to prove that $\overline{K}^{\|\cdot\|_2}=\ker(f^{(2)})$.

\vspace{0.3cm}
\begin{center}
\textsc{Part II}
\end{center}
We first treat the case $m=n=1$. Then the map $f$ has the form $R_a$ (right-multiplication by $a$) for some $a\in A_0$. 
If $a=0$ we have $\overline{K}^{\|\cdot\|_2}=H=\ker(f^{(2)})$ so we may assume $a\neq 0$. Since the ${u_{ij}^\alpha}$'s constitute a linear basis for $A_0$, the element $a\in A_0$ has a unique expansion
\begin{align}
a=\sum_{\alpha\in I}\sum_{i,j=1}^{n_\alpha}t_{ij}^\alpha u_{ij}^\alpha,\tag{$t_{ij}^\alpha\in \CC$}
\end{align} 
and we may therefore consider the non-empty, finite set $S\subseteq I$ given by
\[
S=\{\alpha\in I\mid \exists \ 1\leq i,j\leq n_\alpha : t_{ij}^\alpha\neq 0\}.
\]
Denote by $H_0$ the kernel of $f^{(2)}$ and by $q_0\in M'$ the projection onto it. Denote by $q$ the projection onto $H_0\cap K^\perp$; we need to prove that this subspace is trivial and since the vector-state $h$ is faithful on $M'$ this is equivalent to proving $h(q)=0$. Let $\varps>0$ be given. Since $\GG$ is assumed coamenable, the F{\o}lner condition provides the existence of a finite, non-empty subset $F\subseteq I$ such that
\[
\sum_{\alpha\in \del_S(F)} n_\alpha^2< \varps \sum_{\alpha\in F}n_\alpha^2.
\]
Here we identify a subset $E\subseteq I$ with the corresponding set of corepresentations $\{u^\alpha\mid \alpha\in E\}$.
To simplify notation further we will write $\del$ instead of $\del_S(F)$ in the following and moreover we will suppress the GNS-representations $\lambda\colon M\to B(H)$ and $L\colon \hat{M}\to B(H)$ as in Section \ref{interlude}. Since $h$ is tracial, Woronowicz's quantum Peter-Weyl Theorem \cite[3.2.3]{tuset} takes a particular simple form and states that the set 
\[
\{\sqrt{n_\alpha}u_{ij}^\alpha\mid 1\leq i,j\leq n_\alpha,\alpha\in I\}
\]
constitutes an orthonormal basis for $H$. Hence every $x\in H$ has an $\ell^2$-expansion
\begin{align*}
x=\sum_{\alpha\in I}\sum_{i,j=1}^{n_\alpha} x_{ij}^\alpha\sqrt{n_\alpha} u_{ij}^\alpha. \tag{$x_{ij}^\alpha\in \CC$} 
\end{align*}
Consider a vector $x\in H$ and assume that $P_{\bar{\del}}(x)=0$ such that the $\ell^2$-expansion of $x$ has the form $\sum_{\alpha\notin \del}\sum_{i,j=1}^{n_\alpha} x_{ij}^\alpha \sqrt{n_\alpha} u_{ij}^\alpha$. For  $\gamma \in S$ and $1\leq p,q\leq n_\gamma$ we then have 
\begin{align*}
R^{(2)}_{u_{pq}^\gamma} P_{\bar{F}}(x) &=\sum_{\alpha\notin \del,\alpha\in F}\sum_{i,j=1}^{n_\alpha}x_{ij}^\alpha \sqrt{n_\alpha} u_{ij}^\alpha u_{pq}^\gamma\\
P_{\bar{F}} R^{(2)}_{u_{pq}^\gamma}(x) &= P_{\bar{F}}\Big{(}\sum_{\alpha\notin \del}\sum_{i,j=1}^{n_\alpha} x_{ij}^\alpha \sqrt{n_\alpha} u_{ij}^\alpha u_{pq}^\gamma\Big{)}
\end{align*}
Here $R^{(2)}_{u_{pq}^\gamma}$ denotes the $L^2$-extension of $R_{u_{pq}^\gamma}$. Since $u_{ij}^\alpha u_{pq}^\gamma$ is contained in the linear span of the matrix coefficients of $u^\alpha\tenrep u^\gamma$ and since $\alpha\notin \del=\del_S(F)$ and $\gamma\in S$ we see that the two expressions above are equal. By linearity and continuity we obtain
\[
f^{(2)} P_{\bar{F}}(x)=P_{\bar{F}} f^{(2)}(x). 
\]
This holds for all  $x\in \ker(P_{\bar{\del}})$, so if $x\in H_0\cap \ker(P_{\bar{\del}})$ we have 
\[
0=f^{(2)} P_{\bar{F}}(x)=f(P_{\bar{F}}(x)), 
\]
where the last equality is due to the fact that $\rg(P_{\bar{F}})\subseteq A_0\subseteq H$. This proves that $P_{\bar{F}}(x)\in K=\ker(f)$ and since $q$ was defined as the projection onto $H_0\cap K^\perp$ we get $q P_{\bar{F}}(x)=0$. Since this holds whenever $x\in H_0=q_0(H)$ and $P_{\bar{\del}}(x)=0$ we get 
\[
q P_{\bar{F}}(q_0\wedge(1-P_{\bar{\del}}))=0.
\]
Thus, the restriction $q \pfb \colon H_0\to H$ factorizes through $H_0/H_0\cap\ker(\pdb)$ and we have
\begin{align*}
\dim_\CC(q \pfb(H_0))& \leq \dim_\CC(H_0/H_0\cap\ker(\pdb))\\
&\leq \dim_{\CC}(H/\ker(\pdb))\\
&=\dim_\CC(\rg(\pdb))\\
& =\sum_{\alpha\in {\del}}n_\alpha^2\\
&=\hat{\varphi}(P_\del).
\end{align*}
For any finite rank operator $T\in B(H)$ one has 
\[
|\Tr_H(T)|\leq \|T\|\dim_\CC(T(H))
\]
and using this and Lemma \ref{trace-formula} we now get
\begin{align*}
h(q)\hat{\varphi}(P_F)&=h(q)\hat{\varphi}(\pfb)\\
&=\Tr_H(q \pfb q)\\
&\leq \|q \pfb q\|\dim_\CC(q \pfb q (H))\\
&\leq \dim_\CC (q \pfb (H_0)) \\
&\leq \hat{\varphi}(P_\del).
\end{align*}
Thus
\[
h(q)\leq \frac{\hat{\varphi}(P_\del)}{\hat{\varphi}(P_F)}< \varps,
\]
and since $\varps>0$ was arbitrary we conclude that $q=0$.

\vspace{0.5cm}
\begin{center}
\textsc{Part III}
\end{center}
We now treat the general case of a finitely presented $A_0$-module $Z$ with finite presentation 
\[
A_0^n \overset{f}{\To} A_0^m \To Z\To 0. 
\]
In this case $f$ is given by right multiplication by an $n\times m$ matrix $T=(t_{ij})$ with entries in $A_0$. Each $t_{ij}$ has a unique linear expansion as $t_{ij}=\sum_{\alpha,k,l}t^{(i,j)}_{\alpha, k,l}u_{kl}^\alpha$ and we put
\[
S=\{\alpha\in I \mid \exists \ i,j,k,l,\alpha\ : t^{(i,j)}_{\alpha,k,l}\neq 0 \}.
\]
As in Part II, we may assume that $T\neq 0$ so that $S\neq \emptyset$.  Denote by $H_0$ the space $\ker(f^{(2)})\subseteq H^n$, by $q_0\in \MM_n(M')$ the projection onto $H_0$ and by $q\in \MM_n(M')$ the projection onto $H_0\cap K^\perp$. We need to show that $q=0$. Denote by $\Tr_n$ the non-normalized trace on $\MM_n(\CC)$ and put $h_n=h\tens \Tr_n \colon B(H)\tens \MM_n(\CC)\to \CC$. We aim at proving that $h_n(q)=0$, which suffices since $h$ is faithful on $M'$. For each $x\in \hat{M}$ we denote by $x^n$ the diagonal operator on $H^n$ which has $x$ in each diagonal entry. Under the identification $B(H)\tens \MM_n(\CC)=B(H^n)$ we see that $\Tr_H\tens \Tr_n$ corresponds to $\Tr_{H^n}$, and Lemma \ref{trace-formula} together with a direct computation therefore gives 
\begin{align}
\Tr_{H^n}(A^*P_E^n A)= h_n(A^*A)\hat{\varphi}(P_E)\tag{$\dagger$},
\end{align}
for any finite subset $E\subseteq I$ and any $A$ in $\MM_n(M)$ or $\MM_n(M')$. Let $\varps>0$ be given and choose according to the F{\o}lner condition a finite subset $F\subseteq I$ such that
\[
\sum_{\alpha\in\del_S(F)}n_\alpha^2<\frac{\varps}{n}\sum_{\alpha\in F}n_\alpha^2,
\]
and put $\del=\del_S(F)$ for simplicity. By repeating the argument from the beginning of Part II we arrive at the equation
\[
q P_{\bar{F}}^n(q_0\wedge(1-P_{\bar{\del}}^n))=0,
\]
which in turn yields
\[
\dim_\CC(q\pfb^n(H_0))\leq \dim_\CC(\rg(\pdb^n))=n\dim_\CC(\rg(\pdb))=n\hat{\varphi}(P_{\del}).
\]
Using the trace-formula $(\dagger)$ we conclude that
\begin{align*}
h_n(q)\hat{\varphi}(P_F) &= \Tr_{H^n}(qP_{\bar{F}}^n q)\\
&\leq\|q P_{\bar{F}}^n q\|\dim_\CC(q P_{\bar{F}}^n q(H))\\
&\leq \dim_\CC(q\pfb^n(H_0))\\
&\leq n\hat{\varphi}(P_{\del}).
\end{align*}
Thus
\[
h_n(q)\leq n\frac{\hat{\varphi}(P_{\del})}{\hat{\varphi}(P_F)}<\varps,
\]
and since $\varps>0$ was arbitrary we conclude that $h_n(q)=0$ as desired.

\end{proof}

By putting $Z= \CC$ in Theorem \ref{dim-flad}, we immediately obtain the following corollary.
\begin{cor}\label{vanishing-cor}
Let $\GG=(A,\Delta)$ be a compact, coamenable quantum group with tracial Haar state. Then
$\bet_n(\GG)=0$ for all $n\geq 1$. Here $\bet_n(\GG)$ is the $n$-th $L^2$-Betti number of $\GG$ as defined in \cite{quantum-betti}.
\end{cor}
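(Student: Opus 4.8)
The plan is to recognize that this statement is an immediate specialization of Theorem \ref{dim-flad}. Recall from the introduction that the $n$-th $L^2$-Betti number of $\GG$ is defined as
\[
\bet_n(\GG)=\dim_M\Tor_n^{A_0}(M,\CC),
\]
where $\CC$ is regarded as a left $A_0$-module via the counit $\varps\colon A_0\to\CC$, and $\dim_M(-)$ is L\"uck's extended dimension function arising from the trace-state $h$. Since $\GG$ is assumed coamenable and its Haar state $h$ is tracial, the hypotheses of Theorem \ref{dim-flad} are satisfied verbatim.

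First I would check that the counit gives a legitimate instance of the general module $Z$ appearing in Theorem \ref{dim-flad}: equipping $\CC$ with the action $a\cdot z=\varps(a)z$ for $a\in A_0$, $z\in\CC$, makes $\CC$ a left $A_0$-module, since $\varps$ is an algebra homomorphism. Thus $Z=\CC$ is an admissible choice. Applying Theorem \ref{dim-flad} with this module and with $k=n$ for an arbitrary $n\geq 1$ then yields
\[
\bet_n(\GG)=\dim_M\Tor_n^{A_0}(M,\CC)=0,
\]
which is exactly the assertion of the corollary.

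There is essentially no residual difficulty here: the analytic and homological substance of the argument has already been discharged in the proof of Theorem \ref{dim-flad}, namely the reduction to finitely presented modules, the invocation of the quantum F\o lner condition, and the trace estimates (via Lemma \ref{trace-formula}) bounding the relevant Murray--von Neumann dimensions. The corollary is obtained merely by feeding the trivial $A_0$-module $\CC$ into the general dimension-flatness statement. It is worth remarking, for context, that this recovers the classical vanishing $\bet_n(\Gamma)=0$ for amenable $\Gamma$ in the special case $\GG=(C^*_\red(\Gamma),\Delta_\red)$, in view of the identity $\bet_n(\GG)=\bet_n(\Gamma)$ noted in the introduction.
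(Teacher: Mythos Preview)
Your proposal is correct and follows exactly the paper's own approach: the corollary is obtained by taking $Z=\CC$ (viewed as an $A_0$-module via the counit) in Theorem~\ref{dim-flad}. The additional verification that $\CC$ is a legitimate left $A_0$-module and the contextual remarks are accurate but not strictly necessary.
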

In particular we obtain the following  extension of \cite[3.3]{quantum-betti}.
\begin{cor}
For an abelian, compact quantum group $\GG$ we have $\bet_n(\GG)=0$ for $n\geq 1$.
\end{cor}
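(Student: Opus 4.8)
The plan is to deduce this immediately from Corollary \ref{vanishing-cor}, so the only work is to verify that an abelian compact quantum group satisfies its two hypotheses: coamenability and triviality of the modular structure, i.e.\ a tracial Haar state. First I would recall the structure of a compact quantum group $\GG=(A,\Delta)$ whose underlying $C^*$-algebra $A$ is commutative. By Gelfand duality $A\cong C(G)$ for a compact Hausdorff space $G$ --- metrizable, by the standing separability assumption of Remark \ref{sep-rem} --- and the comultiplication $\Delta$ dualizes to a continuous, associative multiplication on $G$ whose cancellation properties are forced by the non-degeneracy axioms. Hence $G$ is a compact group and $\GG=(C(G),\Delta_c)$, the first example of Section \ref{prelim}.

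The Haar state $h$ is then automatically a trace, since $h(ab)=h(ba)$ holds trivially for all $a,b$ in the commutative algebra $C(G)$; in particular $h$ is faithful (its representing measure, Haar measure on $G$, has full support), so that $A_\red=A$. Coamenability is equally immediate: the counit $\varps\colon A_0\to\CC$ is evaluation at the identity $e\in G$, i.e.\ the restriction to $A_0$ of the bounded character $\ev_e\colon C(G)\to\CC$, so $\varps$ extends continuously to $A=A_\red$, which is exactly Definition \ref{coamenable-defi}. This was in fact already observed in the remark following that definition.

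With both hypotheses verified, Corollary \ref{vanishing-cor} applies directly and yields $\bet_n(\GG)=0$ for all $n\geq 1$. I expect no genuine obstacle here; the only point requiring a word of care is the Gelfand-duality identification of a commutative compact quantum group with some $C(G)$, and that is entirely standard.
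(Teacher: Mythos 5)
Your proposal is correct and follows essentially the same route as the paper: identify the abelian quantum group with $(C(G),\Delta_c)$ for a compact group $G$, observe that the counit is the (globally defined, bounded) evaluation at the identity so that $\GG$ is coamenable, and invoke Corollary \ref{vanishing-cor}. Your explicit remark that the Haar state is automatically tracial on a commutative algebra is a point the paper leaves implicit, but it is the same argument.
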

\begin{proof}
Since $\GG$ is abelian it is of the form $(C(G),\Delta_c)$ for some compact (second countable) group $G$. Since the counit, given by evaluation at the identity, is already globally defined and bounded it is clear that $\GG$ is coamenable and the result now follows from Corollary \ref{vanishing-cor}.
\end{proof}

We also obtain the classical result of L{\"u}ck. 
\begin{cor}{\cite[5.1]{luck98}}\label{luck-thm}
If $\Gamma$ is an amenable, countable, discrete group then for all  $\CC\Gamma$-modules $Z$  and all $n\geq 1$ we have
\[
\dim_{\L(\Gamma)}\Tor_n^{\CC\Gamma}(\L(\Gamma),Z)=0.
\]
In particular, $\bet_n(\Gamma)=0$ for $n\geq 1$.
\end{cor}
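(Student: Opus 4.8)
The plan is to deduce this statement as the special case of Theorem \ref{dim-flad} corresponding to the compact quantum group $\GG=(C^*_\red(\Gamma),\Delta_\red)$ from the second example of Section \ref{prelim}. First I would spell out the relevant dictionary. For a countable discrete group $\Gamma$ the $C^*$-algebra $C^*_\red(\Gamma)$ is separable, so $\GG$ falls within the standing assumptions; its Haar state $h$ is the canonical trace on $C^*_\red(\Gamma)$, hence tracial; a complete family of irreducible corepresentations is $\{\lambda_\gamma\mid\gamma\in\Gamma\}$, so the Hopf $*$-algebra of matrix coefficients is $A_0=\CC\Gamma$; and the enveloping von Neumann algebra $M$ of $A$ in the GNS representation arising from $h$ is precisely the group von Neumann algebra $\L(\Gamma)$. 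Since the GNS trace on $C^*_\red(\Gamma)$ restricts to the canonical trace used to define $\L(\Gamma)$, L\"uck's extended dimension function $\dim_M(-)$ arising from $h$ is literally the function $\dim_{\L(\Gamma)}(-)$ appearing in the statement.

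Next I would observe that $\GG$ is coamenable if and only if $\Gamma$ is amenable. This is recorded in the remark following Definition \ref{coamenable-defi}: $(C^*_\red(\Gamma),\Delta_\red)$ is coamenable exactly when the trivial representation of $C^*_{\full}(\Gamma)$ factors through $C^*_\red(\Gamma)$, which happens exactly when $\Gamma$ is amenable. (Alternatively, one may combine Remark \ref{foelner-rem} with Theorem \ref{quantum-fusion-link}: the corepresentation ring $R(\GG)=\ZZ[\Gamma]$ is amenable as a fusion algebra iff $\Gamma$ is amenable iff $\GG$ is coamenable.) Granting that $\Gamma$ is amenable, Theorem \ref{dim-flad} applies to $\GG$, and under the identifications above it yields
\[
\dim_{\L(\Gamma)}\Tor_k^{\CC\Gamma}(\L(\Gamma),Z)=0
\]
for every left $\CC\Gamma$-module $Z$ and every $k\geq1$, which is the first assertion.

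Finally, for the "in particular" clause I would specialize to $Z=\CC$, viewed as a $\CC\Gamma$-module through the counit $\varps\colon\CC\Gamma\to\CC$ (i.e. the augmentation, which is the trivial representation). Then $\dim_{\L(\Gamma)}\Tor_n^{\CC\Gamma}(\L(\Gamma),\CC)=0$ for $n\geq1$, and since by definition $\bet_n(\Gamma)=\dim_{\L(\Gamma)}\Tor_n^{\CC\Gamma}(\L(\Gamma),\CC)$ we conclude $\bet_n(\Gamma)=0$ for all $n\geq1$; equivalently, this follows from Corollary \ref{vanishing-cor} together with the identity $\bet_n(\GG)=\bet_n(\Gamma)$ for $\GG=(C^*_\red(\Gamma),\Delta_\red)$. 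I do not anticipate any genuine obstacle here: the entire mathematical content already resides in Theorem \ref{dim-flad}, and the only point requiring attention is the routine verification that the quantum-group data attached to $(C^*_\red(\Gamma),\Delta_\red)$ collapse to the classical objects $\CC\Gamma$, $\L(\Gamma)$ and the canonical trace.
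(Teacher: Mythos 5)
Your proposal is correct and is exactly the paper's argument: set $\GG=(C^*_\red(\Gamma),\Delta_\red)$, use that $\GG$ is coamenable iff $\Gamma$ is amenable, and apply Theorem \ref{dim-flad} (and Corollary \ref{vanishing-cor} for the $L^2$-Betti number statement). The extra detail you supply on identifying $A_0$ with $\CC\Gamma$ and $M$ with $\L(\Gamma)$ is the routine dictionary the paper leaves implicit.
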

\begin{proof}
Put $\GG=(C^*_\red(\Gamma),\Delta_\red)$. Then $\GG$ is coamenable if and only if $\Gamma$ is amenable and the result now follows from Theorem \ref{dim-flad} and Corollary \ref{vanishing-cor}
\end{proof}
Note, however, that this does not really give a new proof of L{\"u}ck's result since the proof of Theorem \ref{dim-flad} coincides with L{\"u}ck's proof of the statement in Corollary \ref{luck-thm} when $\GG=(C^*_\red(\Gamma),\Delta_\red)$. \\

	In \cite{CS}, Connes and Shlyakhtenko introduced a notion of $L^2$-Betti numbers for tracial $*$-algebras. From the above results we also obtain vanishing of these Connes-Shlyakhtenko $L^2$-Betti numbers for certain Hopf $*$-algebras. More precisely we get the following.

\begin{cor}
Let $\GG=(A,\Delta)$ be a compact, coamenable quantum group with tracial Haar state $h$. Then $\bet_n(A_0,h)=0$ for all $n\geq 1$, where $\bet_n(A_0,h)$ is the $n$-th  Connes-Shlyakhtenko $L^2$-Betti number of the $*$-algebra $A_0$ with respect to the trace $h$.
\end{cor}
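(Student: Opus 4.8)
The plan is to deduce the statement from Corollary \ref{vanishing-cor} by invoking the identity $\bet_n(A_0,h)=\bet_n(\GG)$, valid for $n\ge 0$ and every compact quantum group with tracial Haar state; this comparison between the Connes--Shlyakhtenko $L^2$-Betti numbers of the Hopf $*$-algebra $A_0$ and the quantum group $L^2$-Betti numbers $\bet_n(\GG)$ is part of the foundational material in \cite{quantum-betti}. Granting it, the corollary is immediate, since $\bet_n(A_0,h)=\bet_n(\GG)=0$ for $n\ge 1$ by Corollary \ref{vanishing-cor}, $\GG$ being coamenable with tracial Haar state.

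For completeness, here is why the comparison holds; it is the only non-formal ingredient. Write $A_0^e=A_0\odot A_0^{\op}$ and, as in \cite{CS}, regard $A_0$ as a left $A_0^e$-module via multiplication, so that
\[
\bet_n(A_0,h)=\dim_{M\htens M^{\op}}\Tor_n^{A_0^e}\big(M\htens M^{\op},\,A_0\big).
\]
The Hopf structure provides a unital algebra homomorphism $\delta\colon A_0\to A_0^e$, $\delta(a)=\sum a_{(1)}\otimes S(a_{(2)})^{\op}$, along which $A_0^e$ is free as a left $A_0$-module, together with an isomorphism of left $A_0^e$-modules $A_0\cong A_0^e\utens{A_0}\CC$, where $\CC$ carries the counit action; this is the algebraic form of Fell's absorption principle and uses that the antipode $S$ is bijective on $A_0$. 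Change of rings for $\Tor$ (legitimate since $A_0^e$ is flat over $A_0$ along $\delta$) then yields
\[
\Tor_n^{A_0^e}\big(M\htens M^{\op},A_0\big)\;\cong\;\Tor_n^{A_0}\big(\operatorname{Res}_\delta(M\htens M^{\op}),\,\CC\big),
\]
and a multiplicative-unitary argument identifies this, after taking $\dim_{M\htens M^{\op}}$ and using multiplicativity of L{\"u}ck's dimension under $\htens$ together with $\dim_{M^{\op}}M^{\op}=1$, with $\dim_M\Tor_n^{A_0}(M,\CC)$, which vanishes for $n\ge 1$ by Corollary \ref{vanishing-cor}.

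The hard part, in the self-contained route, is bookkeeping of a delicate kind: one must keep the left and right module structures straight and, in particular, justify moving between the algebraic tensor products in which the projective resolutions live and the von Neumann tensor product $M\htens M^{\op}$ over which the dimension is taken, as well as verify that the multiplicative-unitary twist intertwines precisely the $\delta$-action with the action of $A_0$ through the first leg of $M\htens M^{\op}$. If instead one simply quotes the comparison theorem from \cite{quantum-betti}, nothing is left to do beyond noting that its standing hypotheses --- separability of $A$ and traciality of $h$ --- are in force here by assumption.
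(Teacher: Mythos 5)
Your proposal is correct and follows exactly the paper's route: the paper's proof simply cites the comparison theorem $\bet_n(\GG)=\bet_n(A_0,h)$ from \cite[4.1]{quantum-betti} and combines it with Corollary \ref{vanishing-cor}. Your additional sketch of why the comparison holds (the change-of-rings argument along $\delta\colon A_0\to A_0\odot A_0^{\op}$) is a reasonable outline of the argument in that reference, but it is not needed here since the paper treats the identity as a black box.
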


\begin{proof}
By \cite[4.1]{quantum-betti} we have $\bet_n(\GG)=\bet_n(A_0,h)$ and the claim therefore follows from Corollary \ref{vanishing-cor}.
\end{proof}
The knowledge of dimension flatness also gives genuine homological information about the ring extension $A_0\subseteq M$. More precisely, the following holds.

\begin{cor}\label{eksakt}
If $\GG=(A,\Delta)$ is compact and coamenable with tracial Haar state then the induction functor $M\odot_{A_0}-$ is an exact functor from the category of finitely generated, projective $A_0$-modules to the category of finitely generated, projective $M$-modules.
\end{cor}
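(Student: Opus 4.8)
The plan is to observe that, since $M\odot_{A_0}(-)$ is additive and right exact and carries a direct summand of $A_0^n$ to a direct summand of $M^n$, the only content of the statement is that this functor preserves injectivity of maps between finitely generated projective $A_0$-modules; note that such a map need not be a split monomorphism, as its cokernel is only finitely presented, so this does not reduce to the trivial splitting of short exact sequences of projectives.

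So let $f\colon P'\To P$ be an injective morphism of finitely generated projective $A_0$-modules and put $C=\coker(f)$, a finitely presented left $A_0$-module. First I would feed the short exact sequence $0\To P'\overset{f}{\To}P\To C\To 0$ into the long exact sequence for $\Tor_\bullet^{A_0}(M,-)$. Since $P$ is projective, $\Tor_1^{A_0}(M,P)=0$, so the connecting homomorphism furnishes an isomorphism of $M$-modules
\[
\Tor_1^{A_0}(M,C)\ \xrightarrow{\ \sim\ }\ \ker\bigl(\id_M\odot f\bigr),
\]
where $\id_M\odot f\colon M\odot_{A_0}P'\To M\odot_{A_0}P$ is the induced map. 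Now Theorem \ref{dim-flad} applies to the $A_0$-module $C$ and gives $\dim_M\Tor_1^{A_0}(M,C)=0$, so $\ker(\id_M\odot f)$ is a submodule of the finitely generated projective $M$-module $M\odot_{A_0}P'$ having vanishing von Neumann dimension.

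It remains to upgrade ``dimension zero'' to ``zero''. Since $\GG$ is coamenable, the Haar state is faithful (Theorem \ref{murphy-tuset-thm}), so $M$ is a finite von Neumann algebra carrying the faithful normal trace induced by $h$, and a finitely generated projective $M$-module has the form $pM^{n}$ for a projection $p\in\MM_n(M)$. For such a module every nonzero element $x$ generates a submodule $Mx\cong Ms_\ell(x)$, where $s_\ell(x)$ is the left support projection of $x$, whence $\dim_M(Mx)=\tau(s_\ell(x))>0$; by monotonicity of $\dim_M$ this forces every dimension-zero submodule of $pM^n$ to be trivial. Applying this to $\ker(\id_M\odot f)\subseteq M\odot_{A_0}P'$ gives that $\id_M\odot f$ is injective, and the corollary follows.

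The delicate point is precisely this last upgrade: Theorem \ref{dim-flad} a priori only kills $\Tor$ modulo dimension, and the argument hinges on the observation that the relevant $\Tor$-module embeds into an honest finitely generated projective $M$-module, inside which a nonzero module cannot have zero dimension. The bookkeeping with the long exact $\Tor$-sequence and the permanence properties of the induction functor are routine.
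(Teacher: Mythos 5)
Your proof is correct and follows essentially the same route as the paper: identify $\ker(\id_M\odot f)$ with $\Tor_1^{A_0}(M,\coker f)$, kill its dimension via Theorem \ref{dim-flad}, and then use that a dimension-zero submodule of a finitely generated projective $M$-module must vanish. The only (immaterial) difference is in that last step, where the paper invokes Lück's algebraic closure operation and \cite[0.6]{luck98} to see that the kernel is itself finitely generated projective, whereas you argue directly with support projections of cyclic submodules; both are standard and valid.
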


\begin{proof}
Let $X$ and $Y$ be finitely generated, projective $A_0$-modules and let $f\colon X \to Y$ be an injective homomorphism.  Then
\[
0\To X\overset{f}{\To} Y \To Y/\rg(f)\To 0,
\]
is a projective resolution of $Y/\rg(f)$. Thus $\Tor_1^{A_0}(M,Y/\rg(f))=\ker(\id_M\tens f)$ and from Theorem \ref{dim-flad} we conclude that 
\[
\dim_M(\ker(\id_M\tens f))=0. 
\]
Because $\id_M\tens f$ is a map of finitely generated projective $M$-modules, it is not difficult to prove that 
\[
\ker(\id_M\tens f)=\overline{\ker(\id_M\tens f)}^\alg,
\]
where $\overline{\ker(\id_M\tens f)}^\alg$ is defined (see \cite{luck98}) as the intersection of all kernels arising from homomorphisms from $M\odot_{A_0}X$ to $M$ vanishing on $\ker(\id_M\tens f)$. By \cite[0.6]{luck98}, we conclude from this that $\ker(\id_M\tens f)$ is finitely generated and projective. But, since the dimension function is faithful on the category of finitely generated, projective $M$-modules this forces $\ker(\id_M\tens f)=\{0\}$ and the claim follows.
\end{proof}
Corollary \ref{eksakt}, in particular, implies the following result which was pointed out to us by A.~Thom.
\begin{cor}
Let $\GG=(A,\Delta)$ be compact and coamenable with tracial Haar state and let $x\in A_0$ be a non-zero element such that there exists a non-zero  $m\in M$ with $mx=0$. Then there exists a non-zero $y\in A_0$ with $yx=0$.
\end{cor}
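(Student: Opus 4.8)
The plan is to apply Corollary~\ref{eksakt} to the $A_0$-linear map given by right multiplication by $x$. I would argue by contradiction: assume that $yx\neq 0$ for every non-zero $y\in A_0$. This is precisely the assertion that the homomorphism of left $A_0$-modules
\[
f=R_x\colon A_0\To A_0,\qquad b\longmapsto bx,
\]
is injective. Since both the source and the target are free of rank one, hence finitely generated and projective, Corollary~\ref{eksakt} applies and shows that the induced map $\id_M\odot_{A_0}f\colon M\odot_{A_0}A_0\To M\odot_{A_0}A_0$ is again injective.

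Next I would identify this induced map concretely. Under the canonical isomorphism $M\odot_{A_0}A_0\cong M$, which sends $m\odot a$ to $ma$ (using that $A_0\subseteq M$), the map $\id_M\odot_{A_0}f$ corresponds to right multiplication by $x$ on $M$, since $m\odot a\mapsto m\odot(ax)\mapsto max=(ma)x$. Thus right multiplication by $x$ is injective on $M$, i.e.\ $mx\neq 0$ for every non-zero $m\in M$. This contradicts the hypothesis that there exists a non-zero $m\in M$ with $mx=0$, and hence there must be a non-zero $y\in A_0$ with $yx=0$.

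The argument is short once Corollary~\ref{eksakt} is in hand, so I do not anticipate a real obstacle; the only point requiring care is the bookkeeping of left versus right module structures (the copy of $A_0$ is viewed as a \emph{left} module, $M$ as an $M$-$A_0$-bimodule, and the map in question is right multiplication by $x$). For completeness I note that one can also proceed directly from Theorem~\ref{dim-flad}: if $R_x\colon A_0\to A_0$ were injective, then $0\to A_0\xrightarrow{R_x}A_0\to A_0/A_0x\to 0$ would be a free resolution of the left $A_0$-module $A_0/A_0x$, whence $\Tor_1^{A_0}(M,A_0/A_0x)\cong\{m\in M\mid mx=0\}$; Theorem~\ref{dim-flad} forces this space to have $\dim_M$ equal to $0$, but $\{m\in M\mid mx=0\}=M(1-l(x))$ for the left support projection $l(x)$ of $x$, whose $\dim_M$ equals $h(1-l(x))$ and vanishes only when $1-l(x)=0$ by faithfulness of $h$ --- contradicting the existence of a non-zero annihilator $m$. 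I would present the first, more streamlined version.
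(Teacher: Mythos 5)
Your main argument is exactly the paper's proof: apply Corollary~\ref{eksakt} to the left $A_0$-module map $a\mapsto ax$ and identify the induced map on $M\odot_{A_0}A_0\cong M$ with right multiplication by $x$, so injectivity on $A_0$ would force injectivity on $M$. The bookkeeping of module structures and the alternative derivation from Theorem~\ref{dim-flad} via the left support projection are both correct, so there is nothing to fix.
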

\begin{proof}
This follows by using Corollary \ref{eksakt} on the  map $a\longmapsto ax$.
\end{proof}
An analogous statement about products in the opposite order follows by using the involution in $M$. So, formulated in ring theoretical terms, we obtain the following: Any regular element in $A_0$ stays regular in the over-ring $M$.

\section{Examples}
A concrete  example of a non-commutative, non-cocommutative, coamenable  (matrix) quantum group with tracial Haar state is the orthogonal quantum group $A_{o}(2)\simeq SU_{-1}(2)$. It follows from \cite[5.1]{banica-fusion} that $A_o(2)$ is coamenable. To see that the Haar state is tracial, one observes that the orthogonality property of the canonical fundamental corepresentation implies that the antipode has period two. 
\subsection{Examples arising from tensor products}
If $\GG_1=(A_1,\Delta_1)$ and $\GG_2=(A_2,\Delta_2)$ are compact quantum groups then the (minimal) tensor product $A=A_1\tens A_2$ may be turned into a quantum group $\GG$ by defining the comultiplication $\Delta\colon A\To A\tens A$ to be
\[
\Delta(a)=(\id\tens\sigma\tens \id)(\Delta_1\tens\Delta_2)(a),
\]
where $\sigma$ denotes the flip-isomorphism from $A_1\tens A_2$ to $A_2\tens A_1$. The Haar state is the tensor product of the two Haar states and the counit is the tensor product of the counits. Using these facts, it is not difficult to see \cite{murphy-tuset} that if both $\GG_1$ and $\GG_2$ are coamenable and have tracial Haar states, then the same is true for $\GG$. See e.g.~\cite[11.3.2]{KR2}.

\subsection{Examples arising from bicrossed products}\label{bicrossed-section}

Another way to obtain examples of compact, coamenable quantum groups is via \emph{bicrossed products}.  We therefore briefly sketch the bicrossed product construction following \cite{vaes-vainerman} closely. In \cite{vaes-vainerman}, Vaes and Vainerman consider the more general notion of \emph{cocycle} bicrossed products, but since we will mainly be interested in the case where the cocycles are trivial we will restrict our attention to this case in the following. The more general situation will be discussed briefly in Remark \ref{cocycle-rem}. The bicrossed product construction is defined using the language of von Neumann algebraic quantum groups. We will use this language freely in the following and refer to \cite{kustermans-vaes} for the background material.

	Let $(M_1,\Delta_1)$ and $(M_2,\Delta_2)$ be locally compact (l.c.) von Neumann algebraic quantum groups. Let $\tau\colon M_1\htens M_2\to M_1\htens M_2$ be a faithful $*$-homomorphism and denote by $\sigma\colon M_1\htens M_2\to M_2\htens M_1$ the flip-isomorphism. Then $\tau$ is called a \emph{matching} from $M_1$ to $M_2$ if the following holds.
\begin{itemize}
\item The map $\alpha\colon M_2\To M_1\htens M_2$ given by $\alpha(y)=\tau(1\tens y)$ is a (left) coaction of $(M_1,\Delta_1)$ on the von Neumann algebra $M_2$. 
\item Defining $\beta\colon M_1\To M_1\htens M_2$ as $\beta(x)=\tau(x\tens 1)$  the map $\sigma \beta$ is a (left) coaction of $(M_2,\Delta_2)$ on the von Neumann algebra $M_1$.
\item The coactions satisfy  the following two \emph{matching conditions}:
\begin{align*}
\tau_{(13)}(\alpha\tens 1)\Delta_2  &=(1\tens \Delta_2)\alpha\tag{M1}\\
\tau_{(23)}\sigma_{(23)}(\beta\tens 1)\Delta_1 &= (\Delta_1\tens 1)\beta\tag{M2}
\end{align*}

\end{itemize}
Here we use the standard leg numbering convention (see e.g.~\cite{vandaele}). If $\tau\colon M_1\htens M_2\to M_1\htens M_2$ is a matching from $M_1$ to $M_2$ then it is easy to see that $\sigma\tau\sigma^{-1}$ is a matching from $M_2$ to $M_1$. We will therefore just refer to the pair $(M_1,M_2)$ as a matched pair and to $\tau$ as a matching of the pair. Let $(M_1,\Delta_1)$ and $(M_2,\Delta_2)$ be such a matched pair of l.c.~quantum groups and denote by $\tau$ the matching. We denote by $H_i$ the GNS space of $M_i$ with respect to the left invariant weight $\varphi_i$ and by $W_i$ and $\hat{W_i}$ the natural multiplicative unitaries on $H_i\htens H_i$ for $M_i$ and $\hat{M_i}$  respectively. By $H$ we denote $H_1\htens H_2$ and by $\Sigma$ the flip-unitary on $H\htens H$.
We may now form two crossed products:
\begin{align*}
M&=M_1\ltimes_\alpha M_2  =\vna\{\alpha(M_2),\hat{M_1}\tens 1\} \subseteq \B(H_1\htens H_2)\\
\tilde{M}&=M_2\ltimes_{\sigma\beta} M_1 =\vna\{\sigma\beta(M_1),\hat{M_2}\tens 1\}\subseteq \B(H_2\htens H_1)
\end{align*}
Some of the main results in \cite{vaes-vainerman} are summarized in the following:
\begin{thm}[\cite{vaes-vainerman}]
Define operators
\[
\hat{W}=(\beta\tens 1\tens 1)(W_1\tens 1)(1\tens1\tens\alpha)(1\tens\hat{W_2})
\]
and $W=\Sigma \hat{W}^* \Sigma$ on $H\htens H$. Then $W$ and $\hat{W}$ are multiplicative unitaries and the map $\Delta\colon M\to \B(H\htens H)$ given by $\Delta(a)=W^*(1\tens 1 \tens a)W$ defines a comultiplication on $M$ turning it into a l.c.~quantum group. 
Denoting by $\Sigma_{12}$ the flip-unitary from $H_1\htens H_2$ to $H_2\htens H_1$, the dual quantum group $\hat{M}$ becomes $\Sigma_{12}^* \tilde{M}\Sigma_{12}$ with comultiplication implemented by $\hat{W}$.
\end{thm}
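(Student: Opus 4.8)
This theorem is a distillation of the main construction of \cite{vaes-vainerman}, so in the body of the paper the proof is simply a reference; nevertheless the line of argument is the following. The plan is to establish, in order: (1) $\hat W$ is a unitary; (2) $\hat W$ satisfies the pentagon equation, hence so does $W=\Sigma\hat W^*\Sigma$; (3) $\Delta(a)=W^*(1\tens1\tens a)W$ maps $M$ into $M\htens M$ and is coassociative; (4) $(M,\Delta)$ admits left and right invariant normal semifinite faithful weights, and is therefore a l.c.~quantum group in the sense of Kustermans--Vaes; (5) the dual has the stated form.

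For (1) and (2) one writes $\hat W$ as the product of its four displayed factors, each of which is a unitary; that the product is again unitary, and that it satisfies the pentagon relation, is exactly what the matching conditions (M1) and (M2) are designed to guarantee. Concretely, one expands $\hat W_{12}\hat W_{13}\hat W_{23}$, uses the pentagon equations for $W_1$ and for $\hat W_2$ to rearrange the ``group'' and ``dual'' legs, and uses (M1) and (M2) to commute the coaction parts $\alpha$ and $\sigma\beta$ past them; the two sides then collapse to $\hat W_{23}\hat W_{12}$. Once $\hat W$ is a multiplicative unitary, $W=\Sigma\hat W^*\Sigma$ is one by the standard duality for multiplicative unitaries. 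Step (3) then follows by computing the legs of $W$: one checks that $\vna\{(\id\tens\omega)(W)\mid\omega\}$ equals $M$ while $\vna\{(\omega\tens\id)(W)\mid\omega\}$ equals $\hat M$, so that conjugating $1\tens1\tens M$ by $W$ stays inside $M\htens M$, and coassociativity of $\Delta$ is a formal consequence of the pentagon identity for $W$.

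The main obstacle is step (4): producing the Haar weights and verifying the full list of Kustermans--Vaes axioms. Here one realizes $M=M_1\ltimes_\alpha M_2$ as a crossed product and builds the left invariant weight by combining the dual weight of this crossed product with the left invariant weights $\varphi_1,\varphi_2$ of the two factors, checking left invariance against $\Delta$ by means of the matching conditions; the right invariant weight, the antipode and its polar decomposition are then obtained by symmetric arguments, or by passing through the dual. For (5) one computes the left leg of $W$ --- equivalently the right leg of $\hat W$ --- identifies the von Neumann algebra it generates with $\Sigma_{12}^*\tilde M\Sigma_{12}$, where $\tilde M=M_2\ltimes_{\sigma\beta}M_1$, and verifies that $\hat W$ implements the comultiplication there. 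All of these verifications are carried out in detail in \cite{vaes-vainerman}, to which we refer.
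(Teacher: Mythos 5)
The paper offers no proof of this statement at all --- it is quoted verbatim as a summary of results from \cite{vaes-vainerman} --- and your proposal correctly recognizes this and ultimately defers to the same reference. Your outline of the argument carried out there (unitarity and the pentagon relation for $\hat{W}$ via the matching conditions, coassociativity of $\Delta$ from the pentagon identity, construction of the invariant weights through the crossed-product/dual-weight machinery, and identification of the dual via the legs of $W$) accurately reflects the structure of the proof in that reference, so nothing further is needed.
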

Thus, up to a flip the two crossed products above are in duality. In \cite{vaes-bicrossed-amenability}, Desmedt, Quaegebeur and Vaes studied (co)amenability of bicrossed products. Combining their Theorem 15 with \cite[2.17]{vaes-vainerman} we obtain the following: If $(M_1,M_2)$ is a matched pair with $M_1$ discrete and $M_2$ compact then the bicrossed product $M$ is compact, and $M$ is coamenable if and only if both $M_2$ and $\hat{M}_1$ are. Here a von Neumann algebraic compact quantum group is said to be coamenable if the corresponding $C^*$-algebraic quantum group is. Collecting the results discussed above we obtain the following.
\begin{prop}\label{bicrossed-vanishing}
If $(M_1,M_2)$ is a matched pair of l.c.~quantum groups in which $\hat{M}_1$ and $M_2$ are compact and coamenable, then the bicrossed product $M=M_1\ltimes_\alpha M_2$ is coamenable and compact.  So if the Haar state on $M$ is tracial the quantum group $(M,\Delta)$ has vanishing $L^2$-Betti numbers in all positive degrees. 
\end{prop}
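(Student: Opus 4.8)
The plan is to assemble the facts collected in the paragraph preceding the statement; no new argument is needed. First I would record that the hypothesis forces $M_1$ to be discrete (this is exactly what ``$\hat{M}_1$ compact'' means in the setup of \cite{vaes-vainerman}) while $M_2$ is compact. By Theorem~15 of \cite{vaes-bicrossed-amenability} together with \cite[2.17]{vaes-vainerman}, the bicrossed product $M=M_1\ltimes_\alpha M_2$ of such a matched pair is again a compact l.c.~quantum group, and it is coamenable if and only if both $M_2$ and $\hat{M}_1$ are coamenable. Since the latter two are coamenable by assumption, $M$ is compact and coamenable, which is the first assertion.

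Next, assuming in addition that the Haar state on $M$ is tracial, I would simply invoke Corollary~\ref{vanishing-cor}: the $C^*$-algebraic compact quantum group underlying $(M,\Delta)$ is then coamenable with tracial Haar state, so $\bet_n(M,\Delta)=0$ for all $n\geq 1$. This is the claimed vanishing of the $L^2$-Betti numbers in all positive degrees.

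The hard part, such as it is, is purely bookkeeping: one must check that the notion of coamenability for von Neumann algebraic compact quantum groups used in \cite{vaes-bicrossed-amenability} coincides with the $C^*$-algebraic notion of Definition~\ref{coamenable-defi} under which Corollary~\ref{vanishing-cor} (and hence Theorem~\ref{dim-flad}) was established --- an identification that is standard and was recorded just before the statement --- and that the compactness/discreteness conventions of the various cited sources are mutually compatible. Beyond this there is no genuine obstacle; the entire content of the proposition is carried by Corollary~\ref{vanishing-cor}, hence ultimately by Theorem~\ref{dim-flad}, and by the cited analysis of (co)amenability of bicrossed products.
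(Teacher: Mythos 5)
Your proposal is correct and follows exactly the route the paper takes: the compactness and coamenability of $M=M_1\ltimes_\alpha M_2$ are obtained by combining Theorem~15 of \cite{vaes-bicrossed-amenability} with \cite[2.17]{vaes-vainerman} (using that $\hat{M}_1$ compact means $M_1$ discrete), and the vanishing of the $L^2$-Betti numbers is then an immediate application of Corollary~\ref{vanishing-cor}. Nothing further is needed.
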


In order to produce more concrete examples, we will now discuss a special case of the bicrossed product construction in which one of the coactions comes from an actual group action. This part of the theory is due to De Canni{\`e}re \cite{canniere} and is formulated using the language of Kac algebras. We remind the reader that a \emph{compact} Kac algebra is nothing but a von Neumann algebraic, compact quantum group with tracial Haar state. A discrete, countable group $\Gamma$ acts on a compact Kac algebra $(M,\Delta, S,h)$ if the group acts on the von Neumann algebra $M$ and the action commutes with both the coproduct and the antipode. Denoting the action by $\rho$, this means that
\begin{align*}
\Delta(\rho_\gamma(x))&=\rho_\gamma\tens\rho_\gamma (\Delta(x)),\\
S(\rho_\gamma(x)) &=\rho_\gamma(S(x)),%
\end{align*}
for all $\gamma\in \Gamma$ and all $x\in M$. 
In this situation, the action of $\Gamma$ on $M$ induces a coaction $\alpha \colon M\To \ell^\infty(\Gamma)\htens M$. Denoting by $H$ the Hilbert space on which $M$ acts and
identifying $\ell^2(\Gamma)\htens H$ with $\ell^2(\Gamma,H)$, this coaction is given by the formula 
\[
\alpha(x)(\xi)(\gamma)=\rho_{\gamma^{-1}}(x)(\xi(\gamma)),
\]
for $\xi\in\ell^2(\Gamma,H)$. The crossed product, which is defined as 
\[
\Gamma\ltimes_\rho M=\{\alpha(M),\L(\Gamma)\tens 1\}'', 
\]
becomes again a Kac algebra \cite[Thm.1]{canniere}. One should note at this point that De Canni{\`e}re works with the \emph{right} crossed product acting on $H\htens \ell^2(\Gamma)$ where we work with the \emph{left} crossed product acting on $\ell^2(\Gamma)\htens H$. But, one can come from one to the other by conjugation with the flip-unitary and we may therefore freely transport all results from \cite{canniere} to the setting of \emph{left} crossed products. We now prove that De Canni{\`e}re's crossed product can also be considered as a bicrossed product. This is probably well known to experts in the field, but we were unable to find an explicit reference.
\begin{prop}
Defining $\tau\colon \ell^\infty(\Gamma)\htens M\To \ell^{\infty}(\Gamma)\htens M$ by 
\[
\tau(\delta_\gamma\tens x)=\delta_\gamma\tens \rho_{\gamma^{-1}}(x)
\]
we obtain a matching with the above defined $\alpha$ as the corresponding coaction of $\ell^\infty(\Gamma)$ on $M$ and trivial coaction of $(M,\Delta)$ on $\ell^\infty(\Gamma)$. 
\end{prop}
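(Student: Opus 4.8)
The plan is to check directly the three items in the definition of a matching, taking as matched pair $(M_1,\Delta_1)=(\ell^\infty(\Gamma),\Delta_{\ell^\infty(\Gamma)})$ and $(M_2,\Delta_2)=(M,\Delta)$, where $\Delta_{\ell^\infty(\Gamma)}$ is the usual comultiplication determined by $\Delta_{\ell^\infty(\Gamma)}(\delta_\gamma)=\sum_{st=\gamma}\delta_s\tens\delta_t$. First I would record that $\tau$ is a well-defined normal $*$-automorphism of $\ell^\infty(\Gamma)\htens M$: under the identification $\ell^\infty(\Gamma)\htens M\cong\ell^\infty(\Gamma,M)$ one has $\tau(f)(\gamma)=\rho_{\gamma^{-1}}(f(\gamma))$, so $\tau$ is the component-wise automorphism $\bigoplus_\gamma\rho_{\gamma^{-1}}$, which is normal, unital and $*$-preserving because each $\rho_{\gamma^{-1}}$ is; in particular $\tau$ is faithful. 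Evaluating, $\alpha(x)=\tau(1\tens x)=\sum_\gamma\delta_\gamma\tens\rho_{\gamma^{-1}}(x)$, which under the above identification is exactly the formula $\alpha(x)(\xi)(\gamma)=\rho_{\gamma^{-1}}(x)(\xi(\gamma))$ stated before the proposition; and $\beta(f)=\tau(f\tens 1)=f\tens 1$, since $\rho_{\gamma^{-1}}(1)=1$, so that $\sigma\beta(f)=1\tens f$ is the trivial coaction of $(M,\Delta)$ on $\ell^\infty(\Gamma)$.

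Next I would verify the coaction axioms. For $\alpha$, the maps involved are normal, unital and faithful because $\tau$ is, so only coassociativity $(\Delta_{\ell^\infty(\Gamma)}\tens\id)\alpha=(\id\tens\alpha)\alpha$ remains to be checked; unwinding both sides on $x\in M$, the left-hand side gives $\sum_{s,t}\delta_s\tens\delta_t\tens\rho_{(st)^{-1}}(x)$ and the right-hand side gives $\sum_{s,t}\delta_s\tens\delta_t\tens\rho_{t^{-1}}(\rho_{s^{-1}}(x))$, and these agree because $\rho$ is a group action, i.e.\ $\rho_{t^{-1}}\rho_{s^{-1}}=\rho_{(st)^{-1}}$. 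For $\sigma\beta$, coassociativity is immediate from $\sigma\beta(f)=1\tens f$.

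Finally I would verify the two matching conditions. Writing $\Delta(x)=\sum x_{(1)}\tens x_{(2)}$ formally, for (M1) one applies $\alpha\tens 1$ to $\Delta(x)$, obtaining $\sum_\gamma\delta_\gamma\tens\rho_{\gamma^{-1}}(x_{(1)})\tens x_{(2)}$, and then $\tau_{(13)}$ (which acts on legs $1$ and $3$) turns this into $\sum_\gamma\delta_\gamma\tens(\rho_{\gamma^{-1}}\tens\rho_{\gamma^{-1}})(\Delta(x))$, whereas $(1\tens\Delta)\alpha(x)=\sum_\gamma\delta_\gamma\tens\Delta(\rho_{\gamma^{-1}}(x))$; the two sides coincide precisely because $\Delta\circ\rho_{\gamma^{-1}}=(\rho_{\gamma^{-1}}\tens\rho_{\gamma^{-1}})\circ\Delta$, one of the standing hypotheses on the action. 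For (M2), since $\beta$ has trivial $M$-leg, both $\tau_{(23)}\sigma_{(23)}(\beta\tens 1)\Delta_{\ell^\infty(\Gamma)}(\delta_\gamma)$ and $(\Delta_{\ell^\infty(\Gamma)}\tens 1)\beta(\delta_\gamma)$ collapse to $\sum_{st=\gamma}\delta_s\tens\delta_t\tens 1$, so (M2) holds. The only point that requires attention is the bookkeeping with the leg-numbering convention and the identification $\ell^\infty(\Gamma)\htens M\cong\ell^\infty(\Gamma,M)$; there is no genuine difficulty, and in particular the antipode hypothesis $S\rho_\gamma=\rho_\gamma S$ plays no role here --- it is needed only afterwards, when De Canni\`ere's theorem is invoked to conclude that the bicrossed product is again a Kac algebra.
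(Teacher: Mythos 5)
Your proof is correct and follows essentially the same route as the paper: identify $\tau(1\tens x)$ with $\alpha(x)$ and $\tau(f\tens1)$ with $f\tens1$, then verify the matching conditions by direct computation, with (M2) collapsing trivially and (M1) reducing to the equivariance $\Delta\circ\rho_\gamma=(\rho_\gamma\tens\rho_\gamma)\circ\Delta$. The only difference is that you carry out (M1) and the coassociativity of $\alpha$ explicitly, where the paper declares them ``analogous'' to its detailed check of (M2); your remark that the antipode hypothesis is not used at this stage is also accurate.
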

\begin{proof}
A direct calculation shows that $\alpha(x)=\tau(1\tens x)$ and $\beta(f)=\tau(f\tens 1)=f\tens 1$. Therefore the two maps $x\mapsto \tau(1\tens x)$ and $f\mapsto \sigma \tau(f\tens 1)$ are coactions as required. We therefore just have to check that the matching conditions are fulfilled. Denote the coproduct on $\ell^\infty(\Gamma)$ by $\Delta_1$ and choose $f\in \ell^\infty(\Gamma)$ such that $\Delta_1(f)\in \ell^\infty(\Gamma)\odot\ell^\infty(\Gamma)$. Writing $\Delta_1(f)$ as $f_{(1)}\tens f_{(2)}$ we now get
\begin{align*}
\tau_{(23)}\sigma_{(23)}(\beta\tens 1)\Delta_1 f &= \tau_{(23)}\sigma_{(23)}(\beta\tens 1)(f_{(1)}\tens f_{(2)})\\
&=\tau_{(23)}\sigma_{(23)}(f_{(1)}\tens 1\tens f_{(2)})\\
&=\tau_{(23)}(f_{(1)}\tens f_{(2)}\tens 1)\\
&=f_{(1)}\tens f_{(2)}\tens 1\\
&=(\Delta_1\tens 1)\beta(f),
\end{align*} 
and hence (M2) is satisfied. An analogous, but slightly more cumbersome, calculation proves that (M1) is also satisfied.
\end{proof}

Thus, as von Neumann algebras, we have $\ell^\infty(\Gamma)\ltimes_\alpha M= \Gamma \ltimes_\rho M$. Using the fact that $\beta$ is trivial, one can prove that the elements $\lambda_\gamma\tens 1$ are group-like and it therefore follows from \cite[3.3]{canniere} that also the comultiplications agree. Hence the two crossed product constructions are identical as l.c.~quantum groups. In particular, the the bicrossed product $\ell^\infty(\Gamma)\ltimes_\alpha M$ is a Kac algebra so if $(M,\Delta)$ is compact then $\ell^\infty(\Gamma)\ltimes_\alpha M$ is also compact \cite[2.7]{vaes-vainerman} and the Haar state is tracial. We therefore have the following.
\begin{prop}
If $\GG=(M,\Delta,S,h)$ is a compact, coamenable Kac algebra and $\Gamma$ is a countable, discrete, amenable group acting on $\GG$ then the crossed product $\Gamma\ltimes M$ is again a compact, coamenable Kac algebra. 
\end{prop}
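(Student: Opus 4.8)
The plan is to recognise $\Gamma\ltimes M$ as a bicrossed product and to invoke Proposition \ref{bicrossed-vanishing}. By the discussion preceding the statement, De Canni{\`e}re's crossed product $\Gamma\ltimes_\rho M$ coincides, as a locally compact quantum group --- both as a von Neumann algebra and in its comultiplication --- with the bicrossed product $\ell^\infty(\Gamma)\ltimes_\alpha M$ attached to the matched pair $(\ell^\infty(\Gamma),M)$ with matching $\tau(\delta_\gamma\tens x)=\delta_\gamma\tens\rho_{\gamma^{-1}}(x)$. Moreover, since $(M,\Delta)$ is compact, this bicrossed product is itself a compact Kac algebra \cite[2.7]{vaes-vainerman}, \cite[Thm.1]{canniere}, and hence automatically has tracial Haar state. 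Consequently the only thing left to prove is coamenability.

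To apply Proposition \ref{bicrossed-vanishing} with $M_1=\ell^\infty(\Gamma)$ and $M_2=M$, I would verify its two hypotheses. First, $M_2=M$ is compact and coamenable by assumption. Second, $M_1=\ell^\infty(\Gamma)$ is a discrete quantum group whose Pontryagin dual $\hat{M}_1$ is the compact quantum group with underlying von Neumann algebra $\L(\Gamma)$; this is evidently compact, and it is coamenable precisely because $\Gamma$ is amenable --- this being the von Neumann algebraic reformulation of the fact, recalled after Definition \ref{coamenable-defi}, that $(C^*_\red(\Gamma),\Delta_\red)$ is coamenable if and only if $\Gamma$ is amenable (alternatively, one may combine Corollary \ref{group-fusion} with Theorem \ref{quantum-fusion-link}). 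Thus both $\hat{M}_1$ and $M_2$ are compact and coamenable, so Proposition \ref{bicrossed-vanishing} yields that $\ell^\infty(\Gamma)\ltimes_\alpha M$ is compact and coamenable.

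Since this quantum group is identical, as a locally compact quantum group, to $\Gamma\ltimes_\rho M$, and coamenability is a property of the underlying l.c.~quantum group, we conclude that $\Gamma\ltimes M$ is coamenable, compact, and a Kac algebra, as claimed. The argument involves no real obstacle beyond bookkeeping: the analytic substance is entirely contained in Proposition \ref{bicrossed-vanishing} (ultimately in the results of Desmedt, Quaegebeur and Vaes \cite{vaes-bicrossed-amenability}), and the only points requiring any attention are the harmless identification of $\hat{M}_1$ with $\L(\Gamma)$ together with its amenability criterion, and the observation --- established above --- that De Canni{\`e}re's and Vaes--Vainerman's crossed product constructions agree on the nose.
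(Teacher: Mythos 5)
Your proposal is correct and follows essentially the same route as the paper: identify $\Gamma\ltimes_\rho M$ with the bicrossed product $\ell^\infty(\Gamma)\ltimes_\alpha M$ via the preceding discussion, note that compactness and the Kac (tracial) property come for free, and deduce coamenability from the Desmedt--Quaegebeur--Vaes result (packaged in the paper as Proposition \ref{bicrossed-vanishing}) together with the fact that $\widehat{\ell^\infty(\Gamma)}=\L(\Gamma)$ is coamenable exactly when $\Gamma$ is amenable. The paper's proof is just a terser version of the same argument.
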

\begin{proof}
That $\Gamma\ltimes M$ is a Kac algebra follows from the discussion above and the coamenability of the crossed product follows from \cite[15]{vaes-bicrossed-amenability} since $\widehat{\ell^{\infty}(\Gamma)}=\L(\Gamma)$ is coamenable if (and only if) $\Gamma$ is amenable.
\end{proof}

\begin{rem}\label{cocycle-rem}
It is also possibly to construct examples using the more general notion of cocycle crossed products introduced in \cite[2.1]{vaes-vainerman}. It is shown in \cite[13]{vaes-bicrossed-amenability} that weak amenability (i.e.~the existence of an invariant mean) is preserved under cocycle bicrossed products. In general it is not known whether or not weak amenability is equivalent to strong amenability, the latter being defined as the dual quantum group being coamenable in the sense of Definition \ref{coamenable-defi}. But for discrete quantum groups this equivalence has been proven by Tomatsu in \cite{tomatsu-amenable} and also by Blanchard and Vaes in unpublished work. Therefore, if $(M_1,M_2)$ is a cocycle matched pair of l.c.~quantum groups with both $\hat{M}_1$ and $M_2$ compact and coamenable, then the cocycle crossed product is also compact and coamenable.
\end{rem}

\newcommand{\etalchar}[1]{$^{#1}$}

\end{document}